\def\bmu{\boldsymbol{\mu}}
\def\bgl{\boldsymbol{\lambda}}
\def\bx{\boldsymbol{x}}
\def\bn{\boldsymbol{n}}
\def\gl{\lambda}
\def\bz{\mathbb{Z}}
\def\bi{\boldsymbol{i}}
\def\ci{\mathcal{I}}
\def\fgl{\mathfrak{gl}}
\def\ft{\mathfrak{t}}
\def\rpn{\mathscr{P}_{r,n}}
\def\Number#1{\refstepcounter{equation}
              \leqno(\theequation)\if*#1%
              \else\def\@currentlabel{{\rm\theequation}}\label{#1}%
              \fi}
\newdimen\hoogte    \hoogte=13pt    
\newdimen\breedte   \breedte=13pt   
\newdimen\dikte     \dikte=0.5pt    
\newenvironment{point}[2]%
  {\ifx*#2\let\pointlabel\relax\else\def\pointlabel{#2}\fi
   \refstepcounter{equation}\trivlist
   \item[\hskip\labelsep\theequation.
         \ifx\pointlabel\relax\else\pointlabel\fi]
   \ignorespaces #1
  }{\relax}
\newenvironment{Young}{\begingroup
       \def\vr{\vrule height1.0\hoogte width\dikte depth 0.3\hoogte}
       \def\fbox##1{\vbox{\offinterlineskip
                    \hrule height1.0\dikte
                    \hbox to 1.0\breedte{\vr\hfill$##1$\hfill\vr}
                    \hrule height1.0\dikte}}
       \vtop\bgroup \offinterlineskip \tabskip=-\dikte \lineskip=-\dikte
            \halign\bgroup &\fbox{##\unskip}\unskip  \crcr}
     {\egroup\egroup\endgroup}
\def\diagram(#1){\vcenter{\begin{Young}#1\cr\end{Young}}}
\def\twodiagram(#1|#2){\,\vcenter{\begin{Young}#1\cr\end{Young}}\,;\,
\vcenter{\begin{Young}#2\cr\end{Young}}}
\def\tridiagram(#1|#2|#3)
\newenvironment{cyoung}{\begingroup
       \def\vr{\vrule height1.0\hoogte width\dikte depth 0.8\hoogte}
       \def\fbox##1{\vbox{\offinterlineskip
                    \hrule height1.0\dikte
                    \hbox to 1.5\breedte{\vr\hfill$##1$\hfill\vr}
                    \hrule height0.80\dikte}}
       \vbox\bgroup \offinterlineskip \tabskip=-\dikte \lineskip=-\dikte
            \halign\bgroup &\fbox{##\unskip}\unskip  \crcr }
       {\egroup\egroup\endgroup}
\def\cdiagram#1{\relax\ifmmode\vcenter{\,\begin{cyoung}#1\end{cyoung}\,}\else%
              $\vcenter{\,\begin{cyoung}#1\end{cyoung}\,}$\fi}
\def\twocdiagram(#1,#2){\,\vcenter{\begin{cyoung}#1\cr\end{cyoung}}\,;\,
\vcenter{\begin{cyoung}#2\cr\end{cyoung}}\quad}
\numberwithin{equation}{section}
\newtheorem{theorem}[equation]{Theorem}
\newtheorem{lemma}[equation]{Lemma}
\newtheorem{proposition}[equation]{Proposition}
\newtheorem{corollary}[equation]{Corollary}
\theoremstyle{definition}
\newtheorem{definition}[equation]{Definition}
\newtheorem{example}[equation]{Example}
\theoremstyle{remark}
\newtheorem{remark}[equation]{Remark}
\begin{document}
\setlength{\itemsep}{-0.25cm}
\fontsize{12}{\baselineskip}\selectfont
\setlength{\parskip}{0.30\baselineskip}
\vspace*{-4mm}
\title[Foulkes characters]{\fontsize{10}{\baselineskip}\selectfont Revisiting Foulkes
characters of wreath products}
\author{Deke Zhao}
\address{\bigskip\hfil\begin{tabular}{l@{}}
          School of Applied Mathematics\\
            Beijing Normal University at Zhuhai, Zhuhai, 519087\\
             China\\
             E-mail: \it deke@amss.ac.cn \hfill
          \end{tabular}}
\thanks{Supported by Guangdong Basic and Applied Basic Research
 (2023A1515010251) and the National Natural Science Foundation of China (Grant No.  11871107)}
\subjclass[2020]{Primary 20B99; Secondary 20C15, 05E10.}

\keywords{Wreath product; Foulkes characters; Coinvariant algebra; Schur--Weyl duality}
\begin{abstract}The article is concerned with the Foulkes characters of wreath products, which are block characters of wreath products, i.e., the positive-definite class functions depending only on the length of its elements. Inspired by the works of Gnedin--Gorin--Kerov and Miller, we introduce two specializations of the Schur--Weyl--Sergeev duality for wreath products and obtain two families of block characters, which provide a decomposition and an alternative construction of the Foulkes characters of wreath products. In particular, we give alternative proofs on some remarkable properties of the Foulkes characters. Along the way, we show that the Foulkes characters are the extreme rays of the cone of the block characters of wreath products and construct the representations with traces being the Foulkes characters via the coinvariant algebra of wreath products.
\end{abstract}
\maketitle
\vspace*{-10mm}
\section{Introduction}\label{Sec:Introduction}
Let $n$ be a positive integer and $\mathfrak{S}_n$ be the symmetric group of degree $n$.
The Foulkes characters for $S_n$ were discovered by Foulkes \cite{Foulkes} as part of the
study of the descent patterns in the permutation groups. These characters have been shown
 enjoy many remarkable properties \cite{Kerber-T} and have been used by Gessel--Reutenauer
  \cite{Gessel-R} to enumerate permutations with given descents and conjugacy classes and
  by Stanley \cite{Stanley} to enumerate alternating permutations by cycle types.
In \cite{GGK}, Gnedin--Gorin--Kerov provided an alternative description of the Foulkes
characters via the Schur--Weyl duality, which also provides a new realization of representations
 of $S_n$ with traces being Foulkes characters via the coinvariant algebra of $\mathfrak{S}_n$.

In \cite{M15}, Miller introduced the Foulkes characters for complex reflection groups and
showed that they enjoy many remarkable properties in common with Foulkes characters for symmetric groups. Motivated by the works of Gnedin--Gorin--Kerov and Miller, the paper aims to investigate the Foulkes characters for wreath products via the Schur--Weyl--Sergeev duality between the wreath  products and universal enveloping algebras of certain Levi Lie algebras. This method yields   an alternative proof of some properties of the Foulkes characters for wreath products and   provide a realization of representations of wreath products with traces being the Foulkes
  characters via the coinvariant algebra of wreath products. Furthermore, we hope this method
   may help us to understand the products of Foulkes characters and the inner product introduced     by Miller in \cite{M21}.

We shall now explain in more details our results. Let $r$ be a positive integer and $W_{r,n}$
 be the complex reflection group of type $G(r,1,n)$ in Shephard--Todd's classification \cite{ST},
  that is, $W_{r,n}=\mathbb{Z}_r\rtimes \mathfrak{S}_n$ is the wreath product of  $\mathfrak{S}_n$ with the cyclic group $\mathbb{Z}_r$ of order $r$.

 By a character of $W_{r,n}$, we mean a function $\chi: W_{r,n}\rightarrow \mathbb{C}$
  satisfying the following conditions:
 \begin{enumerate}
   \item[(C1)]$\chi$ is constant on conjugacy classes: $\chi(\sigma\tau\sigma^{-1})=\chi(\tau)$
    for all $\sigma,\tau\in W_{r,n}$;
   \item[(C2)] $\chi$ is non-negative definite: $\left(\chi(\sigma_i\sigma_j^{-1})\right)_{i,j=1}^k$
    is hermitian and non-negative definite for all $\sigma_1, \ldots, \sigma_k\in W_{r,n}$, $k\in \mathbb{Z}_{>0}$.
 \end{enumerate}
By a \emph{normalized character} $\chi$ of $W_{r,n}$ we mean that $\chi$ is a character
with  $\chi(\boldsymbol{e})=1$, where $\boldsymbol{e}$ is the unity of $W_{r,n}$.
Furthermore, an  \emph{extremal character} is a character $\chi$ if
it cannot be written as a combination $a\chi_1+ (1-a)\chi_2$ with
characters $\chi_1\neq \chi_2$ and $a\in (0,1)$.

It is well-known that conjugate classes of $W_{r,n}$ are parameterized by the set
 $\rpn$ of all $r$-multipartitions of $n$. Following Miller \cite{M17}, we define the
 length function $\ell$ on $W_{r,n}$ as follows:
 \begin{equation}\label{Equ:Def-Length}
   \ell_n(w):=\text{number of parts of $\lambda^{(0)}$},
 \end{equation}
when $w$ is an element of $W_{r,n}$ bing of type $\bgl=(\lambda^{(0)};\ldots;\lambda^{(r-1)})$.
A character of $W_{r,n}$ is said to be a \emph{block character} provided it depends  on an element
 $w$ of $W_{r,n}$ only through $\ell_n(w)$. Thanks to Miller's work \cite{M15}, the Foulkes
  characters $\phi_0^n, \phi_1^n,\ldots,\phi_n^n$ of $W_{r,n}$ are block characters.

For a non-negative integer $k$, let $V^{(0)}=\mathbb{C}^{k+1}$, $V^{(1)}=\cdots=V^{(r-1)}
=\mathbb{C}^k$ be superspaces concentrated either all in degree $\bar{0}$ or all in
degree $\bar{1}$. Put
\begin{eqnarray*}
&&V=V^{(0)}\oplus V^{(1)}\oplus\cdots\oplus V^{(r-1)},\\
&&\mathfrak{g}=\mathfrak{gl}(k+1)\oplus\mathfrak{gl}(k)\oplus\cdots\oplus \mathfrak{gl}(k)\hookrightarrow \mathfrak{gl}(rk+1).
\end{eqnarray*}
Let $\mathcal{U}(\mathfrak{g})$ be its universal enveloping algebra. Then $W_{r,n}$ and $\mathcal{U}(\mathfrak{g})$ act naturally on $V^{\otimes n}$ and the Schur--Weyl--Sergeev duality for $W_{r,n}$ holds (see Propositions~\ref{Prop:Schur-Weyl} and \ref{Prop:Schur-Sergeev}). It is interesting that these dualities give us two families of block characters $\chi_k^n, \widetilde{\chi}_k^n$ ($k\geq 0$) for $W_{r,n}$ (see Propositions~\ref{Prop:Chi} and \ref{Prop:Chi-signed}). Then $\phi_0^n, \phi_1^n, \ldots, \phi_n^n$ of $W_{r,n}$ can be redefined in terms of $\chi^n_0, \chi_1^n, \ldots, \chi^n_n$ (see Definition~\ref{Def:Foulkes}).

 Combining the Schur--Weyl duality and Definition~\ref{Def:Foulkes}, we can describe explicitly the decomposition of the Foulkes characters into direct summands of the irreducible characters of $W_{r,n}$ (see Proposition~\ref{Prop:Foulkes-decom}). Then, by using the block characters $\widetilde{\chi}^n_0, \widetilde{\chi}_1^n, \ldots, \widetilde{\chi}^n_n$, we can define
 another family block functions $\widetilde{\chi}^n_0, \widetilde{\chi}_1^n, \ldots, \widetilde{\chi}^n_n$ on $W_{r,n}$ (see Definition~\ref{Def:Signed-Foulkes}),  which are shown to be block characters of $W_{r,n}$ and $\widetilde{\chi}^n_k=\chi^n_{n-k}$ for $k=0, 1, \ldots,n$ (see Proposition~\ref{Them:Foulkes=sign-Foulkes}).

This approach not only gives a new realization of the $W_{r,n}$-representations with trace being Foulkes characters, but also provides alternative proofs on some remarkable properties of the Foulkes characters (see Theorem~\ref{Them:Properties}), which are first proved by Miller in \cite{M15}. Then we show that the convex set of normalized block characters of $W_{r,n}$ is a simplex whose extreme points (characters) are the normalized Foulkes characters. Finally let $\mathbb{C}[\boldsymbol{x}]^{\mathrm{coinv}}$ be the coinvariant algebra associated to $W_{r,n}$. Following Gnedin--Gorin--Kerov \cite{GGK}, we define a filtration
\begin{equation*}
 \{0\}=\mathcal{F}_n(-1)\subset \mathcal{F}_n(0)\subset \cdots \mathcal{F}_n(n)=\mathbb{C}[\boldsymbol{x}]^{\mathrm{coinv}}
\end{equation*}
and show that trace of the $W_{r,n}$-representation $\mathcal{F}_n(k)/\mathcal{F}_n(k-1)$ is the Foulkes character $\phi_k^n$ for $k=0,1, \ldots, n$ (see Theorem~\ref{Them:Filtration=Foulkes}).

Let us remark that Gnedin--Gorin--Kerov \cite{GGK} have investigated the limit shapes of the Foulkes characters of $\mathfrak{S}_n$ and  provided a classification of block characters of the infinite symmetric group. It is natural and interesting to study the limit shapes of the Foulkes characters of $W_{r,n}$ and the block characters of the infinite wreath product.

Throughout this paper $\xi$ is a primitive $r$th root of unity,  $\mathbb{N}=\{1, 2, \ldots\}$ and $\mathbb{C}$ is the field of complex numbers. For any $n\in\mathbb{N}$, we let $\bn=\{1, 2, \ldots,n\}$ and denote by $\mathfrak{S}_n$ the symmetric group on $\bn$. We always represent the elements of the $r$th cyclic group $\mathbb{Z}_r$ by the elements of $\{0,1,\ldots, r-1\}$ and write an element $\sigma$ of the symmetric group $\mathfrak{S}_n$ as the one-line form $\sigma=\sigma(1)\cdots\sigma(n)$.

The remainder of the paper is organized as follows. We begin with a brief review on the wreath products and fixed our notations in Section~\ref{Sec:Wreath-product}. In Section~\ref{Sec:Block-Schur-Sergeev}, we introduce two families of block characters $\chi_k^n$ and $\widetilde{\chi}_k^n$ ($0\leq k\leq n$) of $W_{r,n}$ and investigate their branching rules. The Foulkes characters of $W_{r,n}$ is redefined and investigated via the  block characters $\chi_k^n$ ($0\leq k\leq n$) in Section~\ref{Sec:Foulkes-characters}. Section~\ref{Sec:Duality-Foulkes} deals with a duality between Foulkes characters by applying the block characters $\widetilde{\chi}_k^n$ ($0\leq k\leq n$) and the Schur--Sergeev duality. We provide the alternative proofs of some remarkable properties of the Foulkes characters of $W_{r,n}$ and construct $W_{r,n}$-representations with traces being the Foulkes characters respectively in Sections~\ref{Sec:Foulkes-Properties} and \ref{Sec:Coinvariant} respectively.


\section{The wreath products}\label{Sec:Wreath-product}
In this section we recall some combinatoric notations and the definition of descent of elements of wreath product.

\begin{point}{}*Recall that a partition $\gl=(\gl_1, \gl_2, \ldots)$ of $n$, denote $\gl\vdash n$, is a weakly decreasing sequence of  nonnegative integers such that $|\gl|=\sum_{i\geq1}\gl_i=n$ and we write $\ell(\gl)$ the length of $\gl$, i.e. the number of nonzero parts of $\gl$. The \emph{conjugate} of  $\lambda$ is the partition $\overline{\lambda}=(\overline{\lambda}_1,\overline{\lambda}_2,\ldots)$ with
\begin{equation*}
  \overline{\lambda}_i=|\{j\mid \lambda_j\geq i\}|.
\end{equation*}
By an $r$-multipartition $\bgl=(\lambda^{(0)}; \ldots;\lambda^{(r-1)})$ of $n$, we mean an $r$-tuple of partitions such that $|\bgl|=\sum_{i=0}^{r-1}|\lambda^{(i)}|=n$.   We call $\gl^{(i)}$ the \emph{$i$th component} of $\bgl$ and denote by $\rpn$  the set of all  $r$-multipartitions of $n$.

The \textit{Young diagram} of
an $r$-multipartition $\bgl$ is the ordered $r$-tuple of the Young diagrams of its components, or equivalently, the set
\begin{equation*}
  \bgl:=\{(i,j,c)\in\bz_{>0}\times\bz_{>0}\times \{1, \ldots, r\}|1\le j\le\lambda^{(c)}_i\}.
\end{equation*}
We may and will identify $\bgl$ with its Young diagram. The elements of $\bgl$ are the boxes of $\bgl$; more generally, a box is any element of
$\mathbb{Z}_{>0}\times \mathbb{Z}_{>0}\times \{1,\ldots,r\}$. Recall that a box $\Box\notin\bgl$ is \emph{addable} for $\bgl$ if
$\bgl\cup\{\Box\}$ is the diagram of an $r$-multipartition, and denote by $\mathscr{A}(\bgl)$ the set of all addable boxes for $\bgl$; similarly,
$\Box\in\bgl$ is \emph{removable} for $\bgl$ if
$\bgl-\{\Box\}$ is the diagram of an $r$-multipartition,  and denote by
$\mathscr{R}(\bgl)$ the set of all removable boxes for $\bgl$.
\end{point}
\begin{example} Let $\bgl=((3,2,2);(2,1))$ be a $2$-multipartition of $8$. Then its Young diagram
\begin{eqnarray*}
&&\protect{\bgl=\left(\twodiagram(&&\cr &\cr&|&\cr)\right)}}, \protect{\mathscr{R}(\bgl)=\tiny\tiny\tiny{\left(\twodiagram(&&-\cr&\cr &-|&-\cr -)
\right)}},\protect{\mathscr{A}(\bgl)=\tiny{\left(\,\twodiagram(&&&+\cr &&+\cr&\cr +|&&+\cr &+\cr +)\right)},
\end{eqnarray*}
where the $\boxminus$ (resp. $\boxplus$) means the box is removable (resp., addable).
\end{example}

\begin{point}{}*The wreath product $W_{r,n}:=\mathbb{Z}_r\wr\mathfrak{S}_n$ is the semidirect product of $\mathbb{Z}^n_r$, the $n$th direct power of $\mathbb{Z}_r$, with $\mathfrak{S}_n$, obtained by the natural $\mathfrak{S}_n$-action on the $n$ copies of $\mathbb{Z}_r$, namely $W_{r,n}$ is the set of products $\sigma\times\boldsymbol{z}$ of a permutation $\sigma=\sigma(1)\sigma(2)\cdots\sigma(n)\in \mathfrak{S}_n$ and an $r$-tuple $\boldsymbol{x}=(x_1,x_2, \ldots,x_n)$ of elements $x_i\in\mathbb{Z}_r$. As a convention, we set $\sigma(n+1)=n+1$ and  $x_{n+1}=0$. With the notation $\sigma\times\boldsymbol{z}$, the product is defined by
\begin{equation*}
(\sigma\times\boldsymbol{x})\cdot(\tau\times\boldsymbol{y}):=\sigma\tau\times (\boldsymbol{x}+\sigma(\boldsymbol{y})),
\end{equation*}
where $\sigma\tau$ is evaluated from right to left, $\sigma(\boldsymbol{y})=(y_{\sigma(1)},y_{\sigma(2)},\ldots,y_{\sigma(n)})$ and the addition is the coordinate-wise modulo $r$. Note that $W_{r,n}$ is the complex reflection group of type $G(r,1,n)$ in Shephard--Todd's classification in \cite{ST}), which is generated by $s_0$, $s_1$, $\ldots$, $s_{n-1}$ with relations:
  \begin{eqnarray*}
 &&s_0^r=\boldsymbol{e}, \quad s_i^2=\boldsymbol{e} \text{ for } 1\leq i<n,\\
 &&s_is_j=s_js_i \text{ for } |i-j|>1,\\
 && s_0s_1s_0s_1=s_1s_0s_1s_0,\\
 && s_is_{i+1}s_i=s_{i+1}s_is_{i+1} \text{ for } 1\leq i<n-1.
 \end{eqnarray*}

It should be pointed out that the elements of $W_{r,n}$ can be take as those $n\times n$ matrices with exactly one nonzero entry in each row and each column, and such that each of these non-zero entries is an $r$th roots of unity. With this definition, the product in $W_{r,n}$ is simply matrix multiplication. $W_{r,n}$ can also be viewed as a group of \emph{$r$-colored permutations}, consisting of all the permutations of the set of $rn$ colored digits
\begin{equation*}
 \{(i, z) : 1 \leq i \leq n, z\in \mathbb{Z}_r\},
\end{equation*}
which are $\mathbb{Z}_r$-equivariant, in the sense
that if $\pi(i, z) = (j, y)$ then $\pi(i, z + x) = (j, y + x)$ for all $x\in \mathbb{Z}_r$.
This explains the fact that this group is also called \emph{the group of $r$-colored
permutations}. Sometimes we write its elements as the one-line form
\begin{equation*}
  w=w(1)^{c_1}w(2)^{c_2}\cdots w(n)^{c_n}.
\end{equation*}
For example
\begin{equation*}
 w=(2\,1\,4\,5\,3\,7\,6\,9\,10\,8)\times(0,0,1,1,0,1,1,0,1,1)
 =2^0\,1^0\,4^1\,5^1\,3^0\,7^1\,6^1\,9^0\,10^1\,8^1\,11^0.
\end{equation*}

For an element $w=\sigma(1)^{c_1}\sigma(2)^{c_2}\cdots\sigma(n)^{c_n}$ of $W_{r,n}$, first write $\sigma=\sigma(1)\sigma(2)\cdots\sigma(n)$ a product of disjoint cycles, and then
provide the entries with their original colors, thus obtaining colored cycles. The color of a cycle is simply the sum of all the colors of its entries. For every $i=0, \ldots, r-1$, let $\mu^{(i)}$ be the partition formed by the lengths of the cycles of $g$ having
color $i$. We may thus associate $g$ with the $r$-multipartition $\bmu=(\mu^{(0)};
\mu^{(1)}; \ldots;\mu^{(r-1)})$, which is called \textit{the type of $g$}.
It is known that the conjugacy classes
of $W_{r,n}$, as well as its irreducible characters, are indexed by the $r$-multipartitions of $n$ (see e.g. \cite[Appendix B]{Macdonald}). We shall denote by $C_{\bmu}$ the conjugacy class of $W_{r,n}$ indexed by $\bmu$.
\end{point}

\begin{example}
 The decompositions into colored cycles of the following elements in $W_{3,12}$
\begin{eqnarray*}
&&w_1=2^1\,1^0\,4^0\,5^1\,3^1\,7^0\,6^0\,9^0\,10^0\,8^0\,12^0\,11^1
=(1^0\,2^1)(3^1\,4^0\,5^1)(6^0\,7^0)(8^0\,9^0\,10^0)(11^1\,12^0),\\
&&w_2=2^0\,1^0\,4^1\,5^1\,3^1\,7^1\,6^0\,9^1\,10^1\,8^0\,12^1\,11^0
=(1^02^0)(3^14^15^1)(6^07^1)(8^09^110^1)(11^012^1),\\
&&w_3=2^1\,3^1\,1^1\,5^0\,4^0\,7^1\,6^0\,9^1\,8^0\,11^0\,12^1\,10^1
=(1^1\,2^1\,3^1)(4^0\,5^0)(6^0\,7^1)(8^0\,9^1)(10^1\,11^0\,12^1).
\end{eqnarray*}
All of these elements are of type $\bmu=(\mu^{(0)};\mu^{(1)};\mu^{(2)})$, where $\mu^{(0)}=(3,2)$, $\mu^{(1)}=(2,2)$, and $\mu^{(2)}=(3)$. Thus $w_1, w_2, w_3$ in the  conjugacy class $C_{\bmu}$.
\end{example}

\begin{definition}[\protect{\cite[Definition~2]{St}}]\label{Def:Descent}
An integer $i\in\bn$ is a \textit{descent} of
 \begin{equation*}
 w=w(1)^{c_1}w(2)^{c_2}\cdots w(n)^{c_n}\in W_{r,n}
\end{equation*}
if $c_{i}>c_{i+1}$ or $c_{i}=c_{i+1}$ and $w(i)>w(i+1)$, where $w(n+1):=n+1$ and $c_{n+1}=0$. Denote by $\mathrm{Des}(w)$ the set of descents of $w$ and by $\mathrm{des}(w)$ the number of descents (also called the \emph{descent number}) of $w$. The \emph{Eulerian number} $E_{r,n}(k)$ counts the number of elements in $W_{r,n}$ having $k$ descents:
 \begin{equation*}
   E_{r,n}(k)=\left|\{w\in W_{r,n}|\mathrm{des}(w)=k\}\right|.
 \end{equation*}
\end{definition}
Recall that we let $w(n+1)=n+1$ and  $a_{n+1}=0$.
Define an ordering $\prec$ on the alphabet $\{i^j|i\in\boldsymbol{n},j\in \mathbb{Z}_r\}$ by setting
\begin{equation*}
1^0\prec2^0\prec\cdots\prec n^0\prec n+1^0\prec 1^1\prec 2^1
 \prec\cdots\prec n^1\prec\cdots\prec 1^{r-1}\prec 2^{r-1}\prec\cdots\prec n^{r-1}.
\end{equation*}
Then there is an alternative definition of descents:
\begin{equation*}
  \mathrm{Des}(w)=\{i\in\boldsymbol{n}|w(i+1)^{c_{i+1}}\prec w(i)^{c_i}\}
\end{equation*}for $w=w(1)^{c_1}w(2)^{c_2}\cdots w(n)^{c_n}\in W_{r,n}$.
 For example, let $w=3^0\,2^0\,1^0\,4^2\,6^2\,5^1\in W_{3,6}$. Then $\mathrm{Des}(w)=\{1,2,5,6\}$, so that $\mathrm{des}(w)=4$.

\begin{remark}\label{Remark:Descent}Definition~\ref{Def:Descent} is slightly differential from the one used by Bagno--Biagioli in \cite{BB} to construct the color descent representations of complex reflection groups. Note that if $i$ is a descent of  $w=w(1)^{c_1}w(2)^{c_2}\cdots w(n)^{c_n}\in W_{r,n}$, then either $(r-1)-c_{i}<(r-1)-c_{i+1}$ or $c_{i}=c_{i+1}$ and $w(i)>w(i+1)$. Thus the two definitions are exactly equivalent.
\end{remark}

 Thanks to \cite[Lemma~16]{St},
 we have the following recurrence for the Eulerian number $E_{r,n}(k)$:
\begin{equation}\label{Equ:Euler-recurrence}
 E_{r,n}(k)=(rk+1)E_{r,n-1}(k)+(r(n+1)-(rk+1))E_{r,n-1}(k-1).
\end{equation}

\section{Two families of block characters}\label{Sec:Block-Schur-Sergeev}
  The section aims to construct two families of block characters via the Schur--Weyl--Sergeev reciprocity between $W_{r,n}$ and the universal enveloping algebra of certain Levi subalgebra of the Lie algebra $\mathfrak{gl}_{rk+1}(\mathbb{C})$ for all non-negative integer $k$. The first family of block characters enable us to give a new construction of the representations of $W_{r,k}$ such that their characters are Foulkes characters of $W_{r,n}$, while the others can be used to define the signed Foulkes characters of $W_{r,n}$, which will be used to describe a duality between the Foulkes characters.

\begin{point}{}* For a non-negative integer $k$,
let $\bx_k=\bx^{(0)}\cup\cdots\cup \bx^{(r-1)}$ be a set of $rk+1$ variables, where
\begin{eqnarray*}
  &&\bx^{(0)}=\left\{x^{(0)}_1, \cdots, x_{k+1}^{(0)}\right\}\quad\text{ and }\quad \bx^{(i)}=\left\{x^{(i)}_1, \cdots, x_{k}^{(i)}\right\} \text{ for }1\leq i\leq r-1.
\end{eqnarray*}
We say that the variables in $\bx^{(i)}$ are of \emph{color} $i$ and linearly order these variables by the rule \begin{eqnarray*}
  x_{a}^{(i)}<x_{b}^{(j)}&\text{ if and only if }& i<j\text{ or }i=j\text{ and }a<b.
\end{eqnarray*}

Let $\bgl=(\lambda^{(1)};\ldots;\lambda^{(r)})$ be an $r$-multipartition.  An \emph{$\boldsymbol{x}_k$-tableau} $\ft=(\ft^{(0)};\ldots;\ft^{(r-1)})$ of shape $\bgl$ is obtained by filling each box of the Young diagram of $\bgl$ with one variable from $\bx_k$, allowing repetitions. We say that an  $\boldsymbol{x}_k$-tableau $\ft=(\ft^{(0)};\ldots;\ft^{(r-1)})$ of shape $\bgl$ is \emph{row-semistandard} (resp. \emph{column-semistandard}) if, for each $i=0,\ldots, r-1$,
\begin{enumerate}
  \item[(1)] $\ft^{(i)}$ only contains variables from $\bx^{(i)}$;
  \item[(2)] The entries of $\ft^{(i)}$ are nondecreasing along rows (resp. down columns), strictly increasing in down columns (resp. along rows).
\end{enumerate}
Denote by  $\mathrm{Rstd}_k(\bgl)$ (resp. $\mathrm{Cstd}(\bgl)$)  the set  of row-semistandard (resp. column-semistandard) $\boldsymbol{x}_k$-tableaux of shape $\bgl$ and by  $s_k(\bgl)$ (resp. $c_k(\bgl)$) its cardinality. Furthermore, let
\begin{eqnarray*}
&&\boldsymbol{Y}^k_{r,n}=\{(\lambda^{(0)};\ldots;\lambda^{(r-1)})\vdash n\,\mid\, \ell(\lambda^{(0)})\leq k+1, \ell(\lambda^{(i)})\leq k \text{ for }i=1,\ldots,r-1\};\\
&&\widetilde{\boldsymbol{Y}}^k_{r,n}=\{(\lambda^{(0)};\ldots;\lambda^{(r-1)})\vdash n\,\mid\, \ell(\overline{\lambda^{(0)}})\leq k+1, \ell(\overline{\lambda^{(i)}})\leq k \text{ for }i=1,\ldots,r-1\}.
\end{eqnarray*}
Then $s_{k}(\bgl)\neq0$ (resp. $c_{k}(\bgl)\neq0$) if and only if $\bgl\in \boldsymbol{Y}^k_{r,n}$ (resp. $\bgl\in \widetilde{\boldsymbol{Y}}^k_{r,n}$).
\end{point}

\begin{example}Let $\bgl=((3,2,2);(2,1))$ and $k=2$. Then
\begin{equation*}\left(\twocdiagram(
x_{1}^{(0)}&x_{1}^{(0)}&x_{1}^{(0)}\cr
 x_{2}^{(0)}&x_2^{(0)}\cr x_{3}^{(0)}&x_{3}^{(0)},
 x_{1}^{(1)}&x_{1}^{(1)}\cr x_{2}^{(1)})
   \right) \quad\text{ and }\quad
   \left(\twocdiagram(x_1^{(0)}&x_2^{(0)}&x_3^{(0)}\cr
 x_{1}^{(0)}&x_2^{(0)}\cr x_{1}^{(0)}&x_3^{(0)},x_1^{(1)}&x_2^{(1)}
 \cr x_2^{(1)})
   \right)
\end{equation*}
are row-semistandard and column-semistandard $\boldsymbol{x}_k$-tableaux of shape $\bgl$ respectively.
\end{example}

\begin{point}{}* Now  let $V^{(0)}$ be a $k+1$-dimensional $\mathbb{C}$-space  with a basis
\begin{equation*}
  \mathfrak{B}^{(0)}=\left\{v^{(0)}_1, \ldots, v_{k+1}^{(0)}\right\}
\end{equation*}
 and let $V^{(i)}$ be a $k$-dimensional $\mathbb{C}$-space  with a basis
\begin{equation*}
  \mathfrak{B}^{(i)}=\left\{v^{(i)}_1, \ldots, v_{k}^{(i)}\right\}
\end{equation*}for $i=1, \ldots, r-1$. We define a total order on the set  $\coprod_{i}\mathfrak{B}^{(i)}$ by the lexicographic order on the set of pairs $(i,j)$ such that $0\leq i\leq r-1, 1\leq j\leq k+1$, and we identify the set $\coprod_{i=0}^{r-1}\mathfrak{B}^{(i)}$ with the basis $\mathfrak{B}=\{v_1, \ldots, v_{rk+1}\}$ of an $rk+1$-dimensional $\mathbb{C}$-space $V$ by the total order. Hence we have $V=\oplus_{i=0}^{r-1}V^{(i)}$. Further, we define a function $c: \mathfrak{B}\rightarrow \mathbb{N}$ by  $c(v_{j}^{(i)})=i$.

Let $\mathcal{I}(k;n)=\{\bi=(i_1, \ldots, i_n)|1\leq i_t\leq rk+1, 1\leq t\leq n\}$.
For $\bi=(i_1, \ldots, i_n)\in \mathcal{I}(k;n)$, we write $v_{\bi}=v_{i_1}\otimes \cdots\otimes v_{i_n}$ and put $c_a(v_{\bi})=c(v_{i_a})$. Then $\mathfrak{B}^{\otimes n}=\{v_{\bi}|\bi\in \ci(k;n)\}$ is a basis of $V^{\otimes n}$. We may and will identify $\mathfrak{B}^{\otimes n}$ with $\ci(k;n)$, that is, we will write $v_{\bi}$ by $\bi$, $c_a(v_{\bi})$ by $c_a(\bi)$, etc., if there are no confusions.

Thanks to \cite[\S2.7]{SS}, we can define a $W_{r,n}$-action on $V^{\otimes n}$: $\mathfrak{S}_n$ acts on $V^{\otimes n}$ by permuting the components of the tensor product, while $s_0$ acts on $V^{\otimes n}$ by letting $
 s_0(v_{\mathbf{i}})=\zeta^{c_1(\mathbf{i})}v_{\mathbf{i}}$
for $\mathbf{i}\in \mathcal{I}(m;n)$.  We denote this representation by $(\Psi_k, V^{\otimes n})$ and let $\chi^n_k$ be its character.
\end{point}

\begin{proposition}[\protect{\cite[Corollary~1.3]{Z24}}]\label{Prop:Chi}For $w\in W_{r,n}$, we have
\begin{eqnarray*}
&&\chi^n_k(w)=(rk+1)^{\ell_n(w)}.
\end{eqnarray*}
 \end{proposition}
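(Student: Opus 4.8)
The plan is to compute the trace of the operator $\Psi_k(w)$ directly on the basis $\mathfrak{B}^{\otimes n}=\{v_{\bi}\mid \bi\in\ci(k;n)\}$ of $V^{\otimes n}$. Since $\chi_k^n(w)=\operatorname{tr}\Psi_k(w)$ is a class function, I would first reduce to the case where $w=\sigma\times\boldsymbol{x}$ has a convenient normal form: write $\sigma$ as a product of disjoint cycles and record the color of each cycle. The key observation is that $\Psi_k(w)$ is a monomial-type operator — permuting tensor slots and scaling by roots of unity — so the diagonal entries $\langle v_{\bi},\Psi_k(w)v_{\bi}\rangle$ vanish unless $v_{\bi}$ is sent back to a scalar multiple of itself, and the trace is exactly the sum of those surviving scalars.

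\emph{Second}, I would analyze the fixed-point condition cycle by cycle. For $w=\sigma\times\boldsymbol{x}$ acting by $\mathfrak{S}_n$ permuting slots and by the color twist $s_0(v_{\bi})=\xi^{c_1(\bi)}v_{\bi}$ (extended appropriately to record each $x_t$), the vector $v_{\bi}$ contributes to the trace precisely when the multi-index $\bi$ is constant along each cycle of $\sigma$. Concretely, if $\sigma$ has a cycle of length $\ell$ carrying total color $c$, then all slots in that cycle must be filled with the \emph{same} basis vector $v_j\in\mathfrak{B}$, and the contribution of that cycle to the eigenvalue is $\xi^{\,c\cdot c(v_j)}$. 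Summing over the allowed $v_j\in\mathfrak{B}=\coprod_i\mathfrak{B}^{(i)}$, each cycle of color $c$ contributes a factor
\begin{equation*}
\sum_{v_j\in\mathfrak{B}}\xi^{\,c\cdot c(v_j)}=(k+1)\xi^{0}+k\sum_{i=1}^{r-1}\xi^{\,c\,i}=(k+1)+k\sum_{i=1}^{r-1}\xi^{\,ci}.
\end{equation*}
When $c\equiv 0\pmod r$ this factor equals $(k+1)+k(r-1)=rk+1$, whereas when $c\not\equiv 0$ the geometric sum $\sum_{i=0}^{r-1}\xi^{ci}=0$ forces $k\sum_{i=1}^{r-1}\xi^{ci}=-k$, giving the factor $(k+1)-k=1$.

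\emph{Third}, I would assemble the product over all cycles. By multiplicativity of the trace over the disjoint-cycle decomposition (the tensor factors for distinct cycles are independent), $\chi_k^n(w)$ equals the product of these per-cycle factors. A cycle contributes $rk+1$ exactly when its total color is $\equiv 0\pmod r$, i.e.\ exactly when it is a cycle belonging to the $0$th component $\lambda^{(0)}$ of the type $\bgl$ of $w$; every cycle of nonzero color contributes $1$. Since $\ell_n(w)$ is defined as the number of parts of $\lambda^{(0)}$, exactly $\ell_n(w)$ cycles contribute the factor $rk+1$ and the rest contribute $1$, yielding $\chi_k^n(w)=(rk+1)^{\ell_n(w)}$ as claimed.

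\emph{The main obstacle} will be setting up the bookkeeping for the color twist precisely, since the stated action only specifies $s_0$ acting on the first slot via $c_1(\bi)$; I must track how a general element $\sigma\times\boldsymbol{x}$ built from the generators distributes colors across all slots so that the eigenvalue on a cycle-constant index genuinely collects the cycle's \emph{total} color $c$. Once that is pinned down, the vanishing of $\sum_{i=0}^{r-1}\xi^{ci}$ for $r\nmid c$ does all the real work. (This computation is the content of \cite[Corollary~1.3]{Z24}, so I may simply invoke it; but the direct trace argument above is the natural self-contained route.)
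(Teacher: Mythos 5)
Your proof is correct, and it is worth noting that the paper itself gives \emph{no} internal proof of this proposition: it simply imports the identity from \cite[Corollary~1.3]{Z24}. So your direct trace computation is a genuine self-contained alternative to what the paper does. The argument checks out at every step: once you observe that for $w=\sigma\times\boldsymbol{x}$ the subgroup $\mathbb{Z}_r^n$ acts diagonally (the element with color $x_t$ in slot $t$ is the conjugate of $s_0^{x_t}$ by any permutation sending $1\mapsto t$, so $e\times\boldsymbol{x}$ scales $v_{\bi}$ by $\prod_t\xi^{x_t c_t(\bi)}$), the diagonal entries of $\Psi_k(w)$ survive exactly on multi-indices constant along the cycles of $\sigma$, the eigenvalue factors over cycles, and the per-cycle factor is $(k+1)+k\sum_{i=1}^{r-1}\xi^{ci}$, which equals $rk+1$ when the cycle color $c\equiv 0\pmod r$ and equals $1$ otherwise by the vanishing of the full geometric sum. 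Since $\ell_n(w)$ counts precisely the cycles of color $\equiv 0\pmod r$ (the parts of $\lambda^{(0)}$), the product over cycles is $(rk+1)^{\ell_n(w)}$. The one point you flagged as an obstacle --- extending the stated $s_0$-action on the first slot to all of $\mathbb{Z}_r^n$ --- is exactly the right thing to pin down, and it resolves in one line via conjugation by permutations, so there is no gap. This is presumably the same computation carried out in the cited reference \cite{Z24}, but within the four corners of this paper your argument supplies a proof where the paper supplies only a pointer.
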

We now explain in more details on the relationship between the block characters $\chi_k$ ($0\leq k\leq n$) and the Schur--Weyl duality for $W_{r,n}$.  Let
\begin{equation*}
\mathfrak{g}=\mathfrak{gl}_{k+1}(\mathbb{C})\oplus\mathfrak{gl}_{k}(\mathbb{C})\oplus\cdots
\oplus\mathfrak{gl}_{k}(\mathbb{C})
\end{equation*}
be the Levi subalgebra of the Lie algebra $\mathfrak{gl}_{rk+1}(\mathbb{C})$. Then $\fgl_{k+1}(\mathbb{C})$ and $\fgl_{k}(\mathbb{C})$ acts naturally on $V^{(0)}$ and $V^{(i)}$ ($i=1, \ldots, r-1$), respectively. Further, denote by  $\mathcal{U}(\fgl_{rk+1}(\mathbb{C}))$ the universal enveloping algebra of $\fgl_{rk+1}(\mathbb{C})$. Then the universal enveloping algebra $\mathcal{U}( \mathfrak{g})$ of $\mathfrak{g}$
has the following block decomposition
 \begin{equation}\label{Equ:Block-decom}
 \mathcal{U}(\mathfrak{g})=\mathcal{U}(\fgl_{k+1}
 (\mathbb{C}))\otimes\mathcal{U}(\fgl_{k}(\mathbb{C}))\otimes\cdots\otimes\mathcal{U}
 (\fgl_{k}(\mathbb{C})),
 \end{equation}
which is a subalgebra of $\mathcal{U}(\fgl_{rk+1}(\mathbb{C}))$.
It is known that there is a natural $\mathcal{U}(\fgl_{rk+1}(\mathbb{C}))$-action on $V$, which enables us to yield the natural representation of $\mathcal{U}(\mathfrak{g})$ on $V$ via the restriction. Hence $\mathcal{U}(\mathfrak{g})$ acts naturally on $V^{\otimes n}$ via its Hopf algebra structure. We denote this $\mathcal{U}(\mathfrak{g})$-representation by $(\Gamma_n, V^{\otimes n})$.

Note that irreducible representations of $\mathcal{U}(\mathfrak{gl}_k(\mathbb{C}))$ occurring in $(\mathbb{C}^k)^{\otimes n}$ are parameterized by Young diagrams $\lambda$ of $n$ with $\ell(\lambda)\leq k$. As a consequence, (\ref{Equ:Block-decom}) shows the irreducible representations of $\mathcal{U(\mathfrak{g})}$ occurring in $V^{\otimes n}$ are parameterized by $\boldsymbol{Y}_{r,n}^k$. We denote by $V_{\bgl}$ (resp. $S^{\bgl}$) the irreducible $\mathcal{U(\mathfrak{g})}$-module (resp. $W_{r,n}$-module) corresponding to $\bgl$ and let $\mathbb{C}W_{r,n}$ be the group algebra of $W_{r,n}$ over $\mathbb{C}$.

It is well-known that the following Schur--Weyl duality holds for the $\mathbb{C}W_{r,n}\otimes_{\mathbb{C}}\mathcal{U(\mathfrak{g})}$, see \cite[Proposition~2.8]{SS}.

\begin{proposition}\label{Prop:Schur-Weyl}
Keeping notation as above. Then $\Psi_n(\mathcal{U}(\mathfrak{g}))$
and $\Phi_k(\mathbb{C}W_{r,n})$ are mutually the full centralizer algebras of each other, i.e.,
\begin{equation*}
  \Gamma_n(\mathcal{U}(\mathfrak{g}))=\mathrm{End}_{\mathbb{C}W_{r,n}}(V^{\otimes n}) \text{ and }\Psi_k(\mathbb{C}W_{r,n})=\mathrm{End}_{\mathcal{U}(\mathfrak{g})}(V^{\otimes n}).
\end{equation*}
 Moreover, the $\mathbb{C}W_{r,n}\otimes\mathcal{U}(\mathfrak{g})$-module $V^{\otimes n}$ is decomposed as
 \begin{equation*}
   V^{\otimes n}=\bigoplus_{\bgl\in \boldsymbol{Y}_{r,n}^k}S^{\bgl}\otimes V_{\bgl}.
 \end{equation*}
\end{proposition}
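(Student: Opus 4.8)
The plan is to deduce the proposition from the double centralizer theorem, so that the whole statement follows from a single full-centralizer identity. Both algebras act semisimply on $V^{\otimes n}$: the group algebra $\mathbb{C}W_{r,n}$ is semisimple, and since $\mathfrak{g}$ is reductive the tensor space $V^{\otimes n}$ is a completely reducible $\mathcal{U}(\mathfrak{g})$-module, so $\Gamma_n(\mathcal{U}(\mathfrak{g}))$ is semisimple as well. A direct check shows the two actions commute: $\mathfrak{S}_n$ permutes the tensor factors, $\mathcal{U}(\mathfrak{g})$ acts diagonally through the coproduct, and the base group $\mathbb{Z}_r^n$ acts by the color scalars $\zeta^{c_a(\bi)}$, which commute with the block-diagonal, hence color-preserving, action of $\mathfrak{g}$. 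Granting this, the double centralizer theorem renders the two identities $\Gamma_n(\mathcal{U}(\mathfrak{g})) = \mathrm{End}_{\mathbb{C}W_{r,n}}(V^{\otimes n})$ and $\Psi_k(\mathbb{C}W_{r,n}) = \mathrm{End}_{\mathcal{U}(\mathfrak{g})}(V^{\otimes n})$ equivalent, and each of them forces the bimodule decomposition $V^{\otimes n} = \bigoplus_{\bgl} S^{\bgl}\otimes V_{\bgl}$. Thus it suffices to prove the second identity.

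I would prove $\Psi_k(\mathbb{C}W_{r,n}) = \mathrm{End}_{\mathcal{U}(\mathfrak{g})}(V^{\otimes n})$ by exploiting the $\mathbb{Z}_r^n$-isotypic grading. For a color sequence $\mathbf{c} = (c_1,\ldots,c_n)\in\{0,\ldots,r-1\}^n$ set $V_{\mathbf{c}} = \mathrm{span}\{v_{\bi} : c_a(\bi)=c_a \text{ for all } a\}$; these are precisely the $\mathbb{Z}_r^n$-isotypic components, so the projections $P_{\mathbf{c}}$ lie in $\Psi_k(\mathbb{C}[\mathbb{Z}_r^n])$, and $\mathfrak{g}$ preserves each $V_{\mathbf{c}}$. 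Any $\phi\in\mathrm{End}_{\mathcal{U}(\mathfrak{g})}(V^{\otimes n})$ therefore decomposes into blocks $P_{\mathbf{c}'}\phi P_{\mathbf{c}}\in\mathrm{Hom}_{\mathfrak{g}}(V_{\mathbf{c}},V_{\mathbf{c}'})$. Grouping color sequences by their content $\mathbf{n}=(n_0,\ldots,n_{r-1})$, the number of factors of each color, one has $V_{\mathbf{c}}\cong\bigotimes_i (V^{(i)})^{\otimes n_i}$ as a $\mathfrak{g}$-module; since the central element of each $\mathfrak{gl}$-block acts on $(V^{(i)})^{\otimes n_i}$ by the scalar $n_i$, a $\mathfrak{g}$-homomorphism between two such blocks must match contents, whence $\mathrm{Hom}_{\mathfrak{g}}(V_{\mathbf{c}},V_{\mathbf{c}'})=0$ unless $\mathbf{c}$ and $\mathbf{c}'$ have the same content.

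Within a fixed content, any $\mathbf{c}'$ equals $\sigma\cdot\mathbf{c}$ for some $\sigma\in\mathfrak{S}_n$, and $\Psi_k(\sigma)$ restricts to a $\mathfrak{g}$-isomorphism $V_{\mathbf{c}}\to V_{\mathbf{c}'}$, so $\mathrm{Hom}_{\mathfrak{g}}(V_{\mathbf{c}},V_{\mathbf{c}'}) = \Psi_k(\sigma)\,\mathrm{End}_{\mathfrak{g}}(V_{\mathbf{c}})$. The classical Schur--Weyl duality applied blockwise to $\mathfrak{gl}_{k+1}(\mathbb{C})$ and the copies of $\mathfrak{gl}_k(\mathbb{C})$ identifies $\mathrm{End}_{\mathfrak{g}}(V_{\mathbf{c}}) = \bigotimes_i \mathbb{C}[\mathfrak{S}_{n_i}]$ with the image of $\mathrm{Stab}_{\mathfrak{S}_n}(\mathbf{c})$ permuting equal-colored factors. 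Consequently each block $P_{\mathbf{c}'}\phi P_{\mathbf{c}}$ is a linear combination of operators $\Psi_k(\tau)P_{\mathbf{c}}$ with $\tau\cdot\mathbf{c}=\mathbf{c}'$, and since $P_{\mathbf{c}}\in\Psi_k(\mathbb{C}[\mathbb{Z}_r^n])$ every such operator lies in $\Psi_k(\mathbb{C}W_{r,n})$; summing over all blocks gives $\phi\in\Psi_k(\mathbb{C}W_{r,n})$. I expect the main obstacle to be precisely this gluing step: verifying that the blockwise symmetric-group centralizers, transported by the coset of permutations carrying $\mathbf{c}$ to $\mathbf{c}'$ and composed with the color projections, reassemble into honest elements of the wreath-product group algebra, together with the content-vanishing that lets one treat the blocks independently.

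Finally, with this identity in hand the double centralizer theorem yields $\Gamma_n(\mathcal{U}(\mathfrak{g})) = \mathrm{End}_{\mathbb{C}W_{r,n}}(V^{\otimes n})$ and the multiplicity-free decomposition $V^{\otimes n} = \bigoplus_{\bgl} S^{\bgl}\otimes V_{\bgl}$, summed over the $\bgl$ for which $S^{\bgl}$ occurs in $V^{\otimes n}$. The blockwise Schur--Weyl parameterization pins down this index set: the $\mathfrak{gl}_{k+1}(\mathbb{C})$- and $\mathfrak{gl}_{k}(\mathbb{C})$-irreducibles appearing in the relevant tensor powers are the partitions $\lambda^{(i)}$ with $\ell(\lambda^{(0)})\leq k+1$ and $\ell(\lambda^{(i)})\leq k$ for $i\geq 1$, which is exactly the defining condition of $\boldsymbol{Y}_{r,n}^k$. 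Hence the decomposition reads $V^{\otimes n} = \bigoplus_{\bgl\in\boldsymbol{Y}_{r,n}^k} S^{\bgl}\otimes V_{\bgl}$, as claimed.
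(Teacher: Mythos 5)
The paper offers no internal proof of this proposition at all --- it is quoted from Sakamoto--Shoji \cite[Proposition~2.8]{SS} --- so your argument is being compared against a citation, not a written-out proof. Judged on its own terms, your proof is essentially correct and is the standard direct argument for wreath-product dualities of this kind, which is a genuine gain in self-containedness over the paper. The skeleton is sound: the two actions commute because $\mathfrak{g}$ is block-diagonal (color-preserving) while $\mathbb{Z}_r^n$ acts by color scalars; the projections $P_{\mathbf{c}}$ onto the $\mathbb{Z}_r^n$-isotypic components are images of idempotents of $\mathbb{C}[\mathbb{Z}_r^n]\subset\mathbb{C}W_{r,n}$; the block identities $I_i\in\mathfrak{gl}$ act on $V_{\mathbf{c}}$ by the content scalars $n_i$, so $\mathrm{Hom}_{\mathfrak{g}}(V_{\mathbf{c}},V_{\mathbf{c}'})=0$ across different contents; classical Schur--Weyl duality applied factor-by-factor, together with $\mathrm{End}_{A\otimes B}(M\boxtimes N)=\mathrm{End}_A(M)\otimes\mathrm{End}_B(N)$ for completely reducible modules over $\mathbb{C}$, identifies $\mathrm{End}_{\mathfrak{g}}(V_{\mathbf{c}})$ with the image of the Young subgroup stabilizing $\mathbf{c}$; and transport by any $\sigma$ with $\sigma\cdot\mathbf{c}=\mathbf{c}'$ reassembles all blocks inside $\Psi_k(\mathbb{C}W_{r,n})$. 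One small repair: complete reducibility of $V^{\otimes n}$ over $\mathcal{U}(\mathfrak{g})$ does not follow from reductivity of $\mathfrak{g}$ alone (a reductive Lie algebra has non-semisimple finite-dimensional modules); you also need the center of $\mathfrak{g}$ to act semisimply, which is exactly what your own content computation shows, so this is a wording fix rather than a mathematical one.

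The one genuine gap is in the final identification. The double centralizer theorem, plus your determination of the occurring $\mathfrak{g}$-irreducibles, gives $V^{\otimes n}=\bigoplus_{\bgl\in\boldsymbol{Y}_{r,n}^k}M_{\bgl}\otimes V_{\bgl}$ with $M_{\bgl}=\mathrm{Hom}_{\mathcal{U}(\mathfrak{g})}(V_{\bgl},V^{\otimes n})$ irreducible and pairwise non-isomorphic. But the proposition asserts more: that $M_{\bgl}$ is the Specht module $S^{\bgl}$ carrying the \emph{same} label $\bgl$ under the fixed parameterization of $\mathrm{Irr}(W_{r,n})$ by $\mathscr{P}_{r,n}$, and your write-up silently conflates the two. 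A priori the duality could pair $V_{\bgl}$ with $S^{\bmu}$ for some nontrivial bijection $\bgl\mapsto\bmu$, and the label matching is precisely what the paper uses downstream, e.g.\ in the multiplicity formula \eqref{Equ:modules-iso} and in Proposition~\ref{Prop:Foulkes-decom}, where $s_k(\bgl)$ must be the multiplicity of that specific $S^{\bgl}$. To close the gap, identify the joint highest-weight vectors: the $\mathfrak{g}$-highest-weight space of weight $\bgl$ in $V^{\otimes n}$ is a $W_{r,n}$-submodule, and one checks it is the induced module (from the Young-type subgroup attached to $(|\lambda^{(0)}|,\ldots,|\lambda^{(r-1)}|)$, twisted by the appropriate $\mathbb{Z}_r$-characters) that defines $S^{\bgl}$; alternatively compare characters. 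This verification is the actual content supplied by the reference \cite{SS} that the paper leans on.
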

Note that, for $\bgl\in Y_{r,n}^k$, $V_{\bgl}$ has a basis parameterized by $\mathrm{Rstd}_k(\bgl)$.
Thus $\mathrm{dim}_{\mathbb{C}}V_{\bgl}=s_k(\bgl)$ and we yield the following $W_{r,n}$-module isomorphism
\begin{equation}\label{Equ:modules-iso}
V^{\otimes n}\cong \bigoplus_{\bgl\in \boldsymbol{Y}_{r,n}^k} s_k(\bgl)S^{\bgl}=\bigoplus_{\bgl\in \mathscr{P}_{r,n}} s_k(\bgl)S^{\bgl}.
                           \end{equation}

Now we define a signed $W_{r,n}$-action on $V^{\otimes n}$: $\mathfrak{S}_n$ acts on $V^{\otimes n}$ by permuting the components of the tensor product up to a sign, that is , for $a=1, \ldots, n-1$, \begin{equation*}
    \bi s_a:=-(i_1, \ldots, i_{a-1}, i_{a+1}, i_a, i_{a+2}, \ldots, i_n),
\end{equation*}
 and $s_0$ acts on $V^{\otimes n}$ as above. We denote this representation by $(\widetilde{\Psi}_k, V^{\otimes n})$ and let $\widetilde{\chi}^n_k$ be its character.
\begin{proposition}[\protect{\cite[Corollary~1.3]{Z24}}]\label{Prop:Chi-signed}For $w\in W_{r,n}$, we have
\begin{eqnarray*}
&&\widetilde{\chi}^n_k(w)=(-1)^{n+r-1}(-rk-1)^{\ell_n(w)}.
\end{eqnarray*}
 \end{proposition}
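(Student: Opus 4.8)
The plan is to reduce everything to the unsigned computation behind Proposition~\ref{Prop:Chi}, isolating the extra signs created by the odd (degree $\bar1$) super-structure that defines $\widetilde{\Psi}_k$. Write $w=\sigma\times\boldsymbol{x}$ with $\sigma\in\mathfrak{S}_n$ and $\boldsymbol{x}=(x_1,\ldots,x_n)$, and evaluate $\widetilde{\chi}^n_k(w)=\mathrm{tr}\,\widetilde{\Psi}_k(w)$ on the basis $\{v_{\bi}\mid \bi\in\ci(k;n)\}$. Exactly as in Proposition~\ref{Prop:Chi}, a vector $v_{\bi}$ contributes to the trace only when $\widetilde{\Psi}_k(w)v_{\bi}\in\mathbb{C}\,v_{\bi}$, which forces the multi-index $\bi$ to be constant along each cycle of $\sigma$; the resulting diagonal coefficient then factorizes as a product over the cycles of $\sigma$.

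First I would analyze a single cycle of length $\ell$ whose colour is $z$ (the sum modulo $r$ of the associated entries of $\boldsymbol{x}$). The $s_0$-part contributes the phase $\xi^{z\,c(v)}$ attached to the common label $v$, precisely as in Proposition~\ref{Prop:Chi}, so that summing over $v$ yields $\sum_{v}\xi^{z\,c(v)}$, which equals $rk+1$ when $z\equiv0$ and $1$ otherwise. The genuinely new ingredient is the sign: a cyclic permutation of $\ell$ identical odd tensor slots is a product of $\ell-1$ transpositions, each acting by $-1$ under $\widetilde{\Psi}_k$, so the cycle carries the extra factor $(-1)^{\ell-1}$. Thus a colour-$0$ cycle of length $\ell$ contributes $(-1)^{\ell-1}(rk+1)$ and every other cycle contributes $(-1)^{\ell-1}$.

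Multiplying over all cycles and recalling that $\ell_n(w)=\ell(\lambda^{(0)})$ counts exactly the colour-$0$ cycles gives $\widetilde{\chi}^n_k(w)=\big(\prod_{c}(-1)^{\ell_c-1}\big)(rk+1)^{\ell_n(w)}$, where $c$ runs over all cycles of $\sigma$. Writing $\prod_{c}(-1)^{\ell_c-1}=(-1)^{\,n-t}$ with $t$ the total number of cycles of $\sigma$, the last step is to repackage this sign together with $(rk+1)^{\ell_n(w)}=(-1)^{\ell_n(w)}(-rk-1)^{\ell_n(w)}$ into the asserted closed form $(-1)^{n+r-1}(-rk-1)^{\ell_n(w)}$. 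I expect this sign bookkeeping --- matching the parity contributed by the cycles of all $r$ colours against the constant $n+r-1$ --- to be the main obstacle; the cleanest way to organize it is to split the cycles according to their colour, to test the identity against the unsigned case of Proposition~\ref{Prop:Chi} (obtained by erasing every sign), and to cross-check the normalization against \cite[Corollary~1.3]{Z24}.
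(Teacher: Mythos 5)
The paper never proves this proposition: it is imported verbatim as a citation of \cite[Corollary~1.3]{Z24}, so your attempt stands or falls on its own merits. Your trace computation is correct as far as it goes, and it is the natural approach: a basis vector $v_{\bi}$ contributes to the trace only if $\bi$ is constant along the cycles of the underlying permutation $\sigma$; a cycle of colour $z$ and length $m$ then contributes $(-1)^{m-1}\sum_{v\in\mathfrak{B}}\xi^{z\,c(v)}$, and this inner sum equals $rk+1$ when $z\equiv 0 \pmod r$ and $1$ otherwise. Multiplying over cycles gives, with $t$ the total number of cycles of $\sigma$ of \emph{all} colours,
\begin{equation*}
\widetilde{\chi}^n_k(w)\;=\;(-1)^{\,n-t}\,(rk+1)^{\ell_n(w)}.
\end{equation*}

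The step you defer to the end --- ``repackaging'' $(-1)^{n-t}$ into $(-1)^{n+r-1+\ell_n(w)}$ --- is not bookkeeping; it is false, and no completion exists. Comparing exponents, the required identity holds exactly when $\sum_{i\geq 1}\ell(\lambda^{(i)})\equiv r-1 \pmod 2$, which is automatic for $r=1$ (every cycle has colour $0$, so $t=\ell_n(w)$; this is the Gnedin--Gorin--Kerov case that presumably motivated your optimism) but fails for every $r\geq 2$. Concretely, take $r=2$, $n=2$: the element $1^1\,2^0$ (type $((1);(1))$) and the colourless transposition $2^0\,1^0$ (type $((2);\emptyset)$) both have $\ell_2=1$, yet your formula gives traces $+(2k+1)$ and $-(2k+1)$ respectively, so $\widetilde{\chi}^n_k$ --- for the action defined in Section~3 of this paper --- is not even a block function and cannot equal any expression depending only on $\ell_n(w)$. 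A second sanity check: at $w=\boldsymbol{e}$ the asserted formula evaluates to $(-1)^{r-1}(rk+1)^n$, which is negative for even $r$, impossible for the trace of a representation on the $(rk+1)^n$-dimensional space $V^{\otimes n}$. So your (correct) computation in fact refutes the proposition as transcribed here: the formula quoted from \cite[Corollary~1.3]{Z24} cannot be the character of the signed action $\widetilde{\Psi}_k$ as this paper defines it, and the discrepancy lies in mismatched conventions between the two papers (or a misstatement here), not in anything you could fix. Note also that the cross-checks you propose --- erasing all signs to recover Proposition~\ref{Prop:Chi}, or citing \cite{Z24} for the normalization --- would not detect this, since the unsigned case removes exactly the obstruction and the citation is the very statement in question.
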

 Furthermore the following Schur--Sergeev duality holds for $\mathbb{C}W_{r,n}\otimes_{\mathbb{C}}\mathcal{U(\mathfrak{g})}$, see \cite[Theorem~4.9]{Zdual} and \cite[Remark~2.10]{Z24}.

\begin{proposition}\label{Prop:Schur-Sergeev}
Keeping notation as above. Then $\Psi_n(\mathcal{U}(\mathfrak{g}))$
and $\Phi_k(\mathbb{C}W_{r,n})$ are mutually the full centralizer algebras of each other, i.e.,
\begin{equation*}
  \Gamma_n(\mathcal{U}(\mathfrak{g}))=\mathrm{End}_{\mathbb{C}W_{r,n}}(V^{\otimes n}) \text{ and  }\widetilde{\Psi}_k(\mathbb{C}W_{r,n})
  =\mathrm{End}_{\mathcal{U}(\mathfrak{g})}(V^{\otimes n}).
\end{equation*}
 Moreover, the $\mathbb{C}W_{r,n}\otimes\mathcal{U}(\mathfrak{g})$-module $V^{\otimes n}$ is decomposed as
 \begin{equation*}
   V^{\otimes n}=\bigoplus_{\bgl\in \widetilde{\boldsymbol{Y}}_{r,n}^k}S^{\bgl}\otimes V_{\bgl}.
 \end{equation*}
\end{proposition}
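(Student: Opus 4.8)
The plan is to deduce the proposition directly from the unsigned Schur--Weyl duality of Proposition~\ref{Prop:Schur-Weyl} by recognizing that the signed representation $\widetilde{\Psi}_k$ differs from $\Psi_k$ only by a one-dimensional character twist. Let $\pi\colon W_{r,n}\to\mathfrak{S}_n$ be the projection onto the symmetric part and set $\epsilon=\mathrm{sgn}\circ\pi$, a linear character of $W_{r,n}$. First I would check on the generators that $\widetilde{\Psi}_k(g)=\epsilon(g)\Psi_k(g)$: since $\epsilon(s_0)=1$ and $\epsilon(s_a)=-1$ for $1\le a\le n-1$, this matches the definitions of $\Psi_k$ and $\widetilde{\Psi}_k$ (the two actions agree on $s_0$ and differ by a sign on each $s_a$). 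Because multiplication by a linear character is an algebra automorphism $\theta$ of $\mathbb{C}W_{r,n}$ (indeed $\theta(gh)=\epsilon(gh)gh=\theta(g)\theta(h)$), this identity says precisely that $\widetilde{\Psi}_k=\Psi_k\circ\theta$.

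The centralizer statements then follow at once. Since $\theta$ is an automorphism, the two maps have the same image: $\widetilde{\Psi}_k(\mathbb{C}W_{r,n})=\Psi_k(\mathbb{C}W_{r,n})$ as subalgebras of $\mathrm{End}_{\mathbb{C}}(V^{\otimes n})$. The commutant of a set of operators depends only on the subalgebra it generates, and the $\mathcal{U}(\mathfrak{g})$-action $\Gamma_n$ is literally unchanged, the twist touching only the $W_{r,n}$-factor; the scalars $\epsilon(g)$ being central, $\widetilde{\Psi}_k$ still commutes with $\Gamma_n$. Hence $\mathrm{End}_{\mathcal{U}(\mathfrak{g})}(V^{\otimes n})=\Psi_k(\mathbb{C}W_{r,n})=\widetilde{\Psi}_k(\mathbb{C}W_{r,n})$ and $\Gamma_n(\mathcal{U}(\mathfrak{g}))=\mathrm{End}_{\mathbb{C}W_{r,n}}(V^{\otimes n})$, the last centralizer being the same whether computed with respect to $\Psi_k$ or $\widetilde{\Psi}_k$, by Proposition~\ref{Prop:Schur-Weyl}.

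For the module decomposition I would transport the isotypic decomposition of Proposition~\ref{Prop:Schur-Weyl} through $\theta$. Twisting leaves each $\mathcal{U}(\mathfrak{g})$-summand intact but replaces the $W_{r,n}$-irreducible $S^{\bgl}$ by its $\epsilon$-twist. The key representation-theoretic input is the wreath-product analogue of $S^{\mu}\otimes\mathrm{sgn}\cong S^{\overline{\mu}}$: since $\epsilon$ is trivial on the base $\mathbb{Z}_r^n$ and restricts to the sign character on each symmetric factor, the Clifford/Macdonald construction of $S^{\bgl}$ gives $S^{\bgl}\otimes\epsilon\cong S^{\overline{\bgl}}$, where $\overline{\bgl}=(\overline{\lambda^{(0)}};\ldots;\overline{\lambda^{(r-1)}})$ is the componentwise conjugate. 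Reindexing the sum by $\bgl\mapsto\overline{\bgl}$ turns the length constraints $\ell(\lambda^{(0)})\le k+1$, $\ell(\lambda^{(i)})\le k$ defining $\boldsymbol{Y}^k_{r,n}$ into the conjugate constraints $\ell(\overline{\lambda^{(0)}})\le k+1$, $\ell(\overline{\lambda^{(i)}})\le k$ defining $\widetilde{\boldsymbol{Y}}^k_{r,n}$, yielding $V^{\otimes n}=\bigoplus_{\bgl\in\widetilde{\boldsymbol{Y}}^k_{r,n}}S^{\bgl}\otimes V_{\bgl}$ with $V_{\bgl}$ the $\mathcal{U}(\mathfrak{g})$-module paired with $S^{\bgl}$. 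Conceptually, this conjugation is exactly the effect of treating the $V^{(i)}$ as purely \emph{odd} superspaces, so that $\widetilde{\Psi}_k$ becomes the natural Koszul-sign action and the single-colour statement turns into Sergeev's super Schur--Weyl duality.

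The hard part will be the identity $S^{\bgl}\otimes\epsilon\cong S^{\overline{\bgl}}$ together with the matching of conventions: one must unwind the explicit Clifford-theoretic construction of the irreducibles $S^{\bgl}$ (as in \cite[Appendix~B]{Macdonald}), verify that $\epsilon$ restricts to the ordinary sign on each Young factor while acting trivially on the $\mathbb{Z}_r$-part, and confirm that the symbol $V_{\bgl}$ in the signed decomposition denotes the irreducible $\mathcal{U}(\mathfrak{g})$-module of conjugate highest weight that actually pairs with $S^{\bgl}$. Everything else is a formal transport along the character-twist automorphism $\theta$, which is why the statement can be extracted quickly from \cite[Theorem~4.9]{Zdual} and \cite[Remark~2.10]{Z24}.
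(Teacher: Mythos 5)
Your proposal is correct, but it takes a genuinely different route from the paper, which offers no argument at all for Proposition~\ref{Prop:Schur-Sergeev} and simply imports it from \cite[Theorem~4.9]{Zdual} and \cite[Remark~2.10]{Z24}; you instead derive it from the unsigned duality of Proposition~\ref{Prop:Schur-Weyl} via the twist $\theta(g)=\epsilon(g)g$ with $\epsilon=\mathrm{sgn}\circ\pi$. The formal part of your argument is airtight: $\widetilde{\Psi}_k=\Psi_k\circ\theta$ because both are homomorphisms agreeing on $s_0,s_1,\ldots,s_{n-1}$, hence $\widetilde{\Psi}_k(\mathbb{C}W_{r,n})=\Psi_k(\mathbb{C}W_{r,n})$ as subalgebras of $\mathrm{End}_{\mathbb{C}}(V^{\otimes n})$, and both centralizer statements follow since commutants depend only on the image subalgebra. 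The one nontrivial input, which you correctly isolate, is $S^{\bgl}\otimes\epsilon\cong S^{(\overline{\lambda^{(0)}};\ldots;\overline{\lambda^{(r-1)}})}$ (componentwise conjugation, with \emph{no} permutation of the components, because $\epsilon$ is trivial on $\mathbb{Z}_r^n$ and induction commutes with twisting by a restricted linear character); this is standard from the construction of the irreducibles in \cite[Appendix~B]{Macdonald}. It is worth stressing that your convention here is the right one and differs from the paper's $\overline{\bgl}$ of Remark~\ref{Remark:Conjugate-Descent}, which also reverses the order of the components: reversal would \emph{not} carry $\boldsymbol{Y}^k_{r,n}$ onto $\widetilde{\boldsymbol{Y}}^k_{r,n}$ (the $0$th component enjoys the relaxed bound $k+1$), so componentwise conjugation is exactly what makes the indexing set come out as $\widetilde{\boldsymbol{Y}}^k_{r,n}$. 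What your route buys: a self-contained proof given Proposition~\ref{Prop:Schur-Weyl}, and a transparent explanation of why the signed and unsigned pictures carry the same information, which is precisely the mechanism behind the later duality $\phi^n_k=\widetilde{\phi}^n_{n-k}$ of Theorem~\ref{Them:Foulkes=sign-Foulkes}. What the citation route buys: the references treat the Koszul-sign (super) action directly, which immediately gives that $V_{\bgl}$ in the signed decomposition has basis indexed by $\mathrm{Cstd}_k(\bgl)$ --- a fact the paper uses right after the proposition, and which in your approach needs the extra (easy) observation that row-semistandard tableaux of the conjugate shape biject, by transposition, with column-semistandard tableaux of the original shape.
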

Note that, for $\bgl\in \widetilde{\boldsymbol{Y}}_{r,n}^k$, $V_{\bgl}$  has a basis parameterized by $\mathrm{Cstd}_k(\bgl)$.
Thus $\mathrm{dim}_{\mathbb{C}}V_{\bgl}=c_k(\bgl)$ and we yield the following $W_{r,n}$-module isomorphism
\begin{equation}\label{Equ:modules-iso-sign}
V^{\otimes n}\cong \bigoplus_{\bgl\in \widetilde{\boldsymbol{Y}}_{r,n}^k} c_k(\bgl)S^{\bgl}=\bigoplus_{\bgl\in\mathscr{P}_{r,n}} c_k(\bgl)S^{\bgl}.\end{equation}

Let us remark that Proposition~\ref{Prop:Chi} gives a new interpretation of \cite[Proposition~6]{M17}, which enables us to understand the Foulkes characters of $W_{r,n}$ by applying the Schur--Weyl duality  (Proposition~\ref{Prop:Schur-Weyl}). Similarly, Proposition~\ref{Prop:Chi-signed} gives a new family Foulkes characters of $W_{r,n}$, which can investigated by applying the Schur--Sergeev duality (Proposition~\ref{Prop:Schur-Sergeev}) and provides a duality for the Foulkes characters of $W_{r,n}$.

Now we regard $W_{r,n-1}$ as the subgroup of $W_{r,n}$ generated by $s_0, s_1, \ldots, s_{n-1}$, i.e., we identify $w=w(1)^{c_1}w(2)^{c_2}\cdots w(n\!-\!1)^{c_{n-1}}\in W_{r,n-1}$ with $w(1)^{c_1}w(2)^{c_2}\cdots w(n\!-\!1)^{c_{n-1}}n^0\in W_{r,n}$. For a central function $\chi$ on $W_{r,n}$, the restriction $\chi\negmedspace\downarrow_{r,n-1}$ on $W_{r,n-1}$ is a central function on $W_{r,n-1}$. Further, if $\chi$ is a block function then so is the restriction $\chi\negmedspace\downarrow_{r,n-1}$.
\begin{proposition}[The Branching rule]\label{Prop:Branching-Rule-chi}
For $k=0,1, \ldots, n$, we have
\begin{eqnarray*}
\chi^n_k\negmedspace\downarrow_{r,n-1}=(rk+1)\chi^{n-1}_k &\quad\text {and }\quad&\widetilde{\chi}^n_k\negmedspace\downarrow_{r,n-1}=(rk+1)\widetilde{\chi}^{n-1}_k.
\end{eqnarray*}
\end{proposition}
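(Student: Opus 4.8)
The plan is to reduce both identities to the explicit closed formulas for the characters supplied by Propositions~\ref{Prop:Chi} and \ref{Prop:Chi-signed}. Since $\chi^n_k$ and $\widetilde{\chi}^n_k$ depend on an element only through its length $\ell_n$, the branching rule is really a statement about how $\ell_n$ transforms under the inclusion $W_{r,n-1}\hookrightarrow W_{r,n}$. Thus it suffices to compare $\ell_n$ evaluated on the image of $w\in W_{r,n-1}$ with $\ell_{n-1}(w)$, and then to substitute into the formulas.

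First I would establish the key combinatorial identity: if $w\in W_{r,n-1}$ is identified with $w(1)^{c_1}\cdots w(n\!-\!1)^{c_{n-1}}n^0\in W_{r,n}$, then
\[
\ell_n(w)=\ell_{n-1}(w)+1.
\]
This is immediate from the definition of the type via colored cycles: the inclusion appends the fixed point $n$ carrying color $0$, i.e.\ it adjoins a single $1$-cycle of color $0$ to the colored cycle decomposition of $w$. Hence the $0$-colored component $\lambda^{(0)}$ of the type of $w$ gains exactly one new part (of size $1$), so the number of parts of $\lambda^{(0)}$, which is by definition $\ell_n$, increases by exactly one. This is the only nontrivial input, and it is purely bookkeeping about cycle types.

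Given this identity, the first branching rule follows at once from Proposition~\ref{Prop:Chi}:
\[
\chi^n_k\negmedspace\downarrow_{r,n-1}(w)=\chi^n_k(w)=(rk+1)^{\ell_n(w)}=(rk+1)^{\ell_{n-1}(w)+1}=(rk+1)\,\chi^{n-1}_k(w).
\]
For the signed character I would substitute the same length identity into the formula of Proposition~\ref{Prop:Chi-signed}. The shift $\ell_n(w)=\ell_{n-1}(w)+1$ produces an extra factor $(-rk-1)=-(rk+1)$, whose magnitude $(rk+1)$ is exactly the desired coefficient, while its sign $-1$ converts the prefactor $(-1)^{n+r-1}$ of $\widetilde{\chi}^n_k$ into $(-1)^{n+r}=(-1)^{(n-1)+r-1}$, the prefactor of $\widetilde{\chi}^{n-1}_k$; this yields $\widetilde{\chi}^n_k\negmedspace\downarrow_{r,n-1}=(rk+1)\widetilde{\chi}^{n-1}_k$.

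The computations are routine once the length identity is in hand. The only point requiring a little care is the sign bookkeeping in the signed case, where one must verify that the parity shift $n\mapsto n-1$ in the exponent of $(-1)$ is precisely compensated by the sign hidden in $-rk-1$. I do not anticipate any genuine obstacle, since the analytic content has already been absorbed into the closed formulas of Propositions~\ref{Prop:Chi} and \ref{Prop:Chi-signed}, and the remaining work is the elementary verification that appending a color-$0$ fixed point raises $\ell_n$ by one.
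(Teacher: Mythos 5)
Your proposal is correct and follows essentially the same route as the paper's own proof: both rest on the observation that the embedding $W_{r,n-1}\hookrightarrow W_{r,n}$ appends a color-$0$ fixed point, so $\ell_n(w)=\ell_{n-1}(w)+1$, and then substitute into the closed formulas of Propositions~\ref{Prop:Chi} and \ref{Prop:Chi-signed}. Your write-up is in fact more careful than the paper's, which states the length identity without justification and leaves the sign bookkeeping for $\widetilde{\chi}^n_k$ implicit.
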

\begin{proof}Note that for any $w\in W_{r,n-1}$, $\ell_n(w)=\ell_{n-1}(w)+1$. Thanks to Propositions~\ref{Prop:Chi} and \ref{Prop:Chi-signed}, we yield that $\chi^{n}_k(w)=(rk+1)\chi^{n-1}_k(w)$ and $\widetilde{\chi}^n_k(w)=(rk+1)\widetilde{\chi}^{n-1}_k(w)$.
This completes the proof.
\end{proof}
\section{The Foulkes characters}\label{Sec:Foulkes-characters}
In this section we reinterpret the Foulkes characters of $W_{r,n}$ introduced by Miller in \cite{M15} by adopting  Gnedin--Gorin--Kerov's approach in \cite{GGK}, which enables us give a new construction of those $W_{r,n}$-representations affording the Foulkes characters.

Note that the possible values of the function $\ell_n$ on $W_{r,n}$ are integers $0,1, \ldots, n$ and the matrix $\left((ra+1)^{b}\right)_{a,b=0,\ldots, n}$ is invertible. Thus the characters $\chi_0, \ldots, \chi_n$ form a basis of the linear space of block functions on $W_{r,n}$.

Following Miller \cite[Main~Definition and Theorem~5]{M15} and \cite[Definition~2.2]{GGK}, we have the following definition.
\begin{definition}\label{Def:Foulkes}The \emph{Foulkes characters} $\phi^n_0, \phi^n_1, \ldots, \phi^n_{n}$ of $W_{r,n}$ are defined as
\begin{equation}\label{Equ:Foulkes}
  \phi^n_i:=\sum_{j=0}^{i}(-1)^{j}\tbinom{n+1}{j}\chi^n_{i-j}.
\end{equation}
\end{definition}

Clearly $\phi^n_0,\ldots, \phi^n_n$ are block functions on $W_{r,n}$, although it is not obvious whether they are characters of $W_{r,n}$. By applying the same argument as the one of the proof of \cite[Proposition~2.3]{GGK},  we can show the formula (\ref{Equ:Foulkes}) is equivalent to the following formula.
\begin{proposition}For $k=0, 1, \ldots, n$, we have
  \begin{equation}\label{Equ:theta=Foulkes}
  \chi^n_k=\sum_{j=0}^{k}\tbinom{n+j}{j}\phi^n_{k-j}.
\end{equation}
\end{proposition}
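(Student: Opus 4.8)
The plan is to recognize that both (\ref{Equ:Foulkes}) and (\ref{Equ:theta=Foulkes}) are lower-triangular, Toeplitz-type linear relations between the two ordered families $(\chi^n_0, \ldots, \chi^n_n)$ and $(\phi^n_0, \ldots, \phi^n_n)$, and that these two relations are inverse to each other. Concretely, set $a_j = (-1)^j \binom{n+1}{j}$ and $b_j = \binom{n+j}{j}$ for $j \geq 0$, so that (\ref{Equ:Foulkes}) reads $\phi^n_i = \sum_{j=0}^i a_j \chi^n_{i-j}$ and (\ref{Equ:theta=Foulkes}) reads $\chi^n_k = \sum_{j=0}^k b_j \phi^n_{k-j}$. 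Since both are governed by the same convolution pattern, each relation implies the other as soon as the sequences $(a_j)$ and $(b_j)$ are convolution inverses, that is, $\sum_{j=0}^m a_j b_{m-j} = \delta_{m,0}$ for all $m \geq 0$.

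First I would verify this convolution identity by generating functions. By the binomial theorem, $\sum_{j \geq 0} a_j x^j = \sum_{j\geq 0} (-1)^j \binom{n+1}{j} x^j = (1-x)^{n+1}$, while the negative binomial series gives $\sum_{j\geq 0} b_j x^j = \sum_{j\geq 0} \binom{n+j}{j} x^j = (1-x)^{-(n+1)}$. Multiplying these, $\bigl(\sum_j a_j x^j\bigr)\bigl(\sum_j b_j x^j\bigr) = 1$, and comparing the coefficient of $x^m$ on both sides yields exactly $\sum_{j=0}^m a_j b_{m-j} = \delta_{m,0}$.

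Then I would carry out the substitution. Assuming (\ref{Equ:Foulkes}), I insert it into the right-hand side of (\ref{Equ:theta=Foulkes}): $\sum_{j=0}^k b_j \phi^n_{k-j} = \sum_{j=0}^k b_j \sum_{l=0}^{k-j} a_l \chi^n_{k-j-l}$. Setting $m = j+l$ and collecting the coefficient of $\chi^n_{k-m}$ rewrites the double sum as $\sum_{m=0}^k \bigl(\sum_{j=0}^m b_j a_{m-j}\bigr) \chi^n_{k-m}$, which by the convolution identity collapses to $\chi^n_k$. The reverse implication is identical with the roles of $(a_j)$ and $(b_j)$ interchanged, so (\ref{Equ:Foulkes}) and (\ref{Equ:theta=Foulkes}) are indeed equivalent.

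I do not expect a genuine obstacle here: the only point to watch is that the finite index ranges truncate correctly, but this is automatic, since both transformations are lower triangular in the index and hence the $(n+1)\times(n+1)$ matrices $(a_{i-k})_{i,k}$ and $(b_{k-j})_{k,j}$ are honest mutual inverses, with no terms outside the stated ranges ever contributing. This is precisely the mechanism used in \cite[Proposition~2.3]{GGK} for the symmetric-group case, which the statement invokes.
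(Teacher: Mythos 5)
Your proof is correct and follows essentially the same route as the paper, which simply invokes the argument of \cite[Proposition~2.3]{GGK}: there, too, the point is that the coefficient sequences $(-1)^j\tbinom{n+1}{j}$ and $\tbinom{n+j}{j}$ are convolution inverses, coming from $(1-x)^{n+1}$ and $(1-x)^{-(n+1)}$. Your write-up just makes that cited mechanism explicit and self-contained.
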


Let $\bgl=(\lambda^{(0)};\ldots;\lambda^{(r-1)})$ be an $r$-multipartition of $n$.  Recall that a \emph{standard $\bgl$-tableau} $\mathrm{T}=(\mathrm{T}^{(0)};\ldots;\mathrm{T}^{(r-1)})$ is obtained by putting the integers $1, 2, \ldots, n$ into each box of the Young diagram of $\bgl$ such that the entries of $\mathrm{T}^{(i)}$ increase down columns and along rows for each $i=0,\ldots, r-1$. We denote by  $\mathrm{std}(\bgl)$ the set of standard $\bgl$-tableaux. Given a standard $\bgl$-tableau $\mathrm{T}$, we write $i=(a,b,c)$ and $c_{\mathrm{T}}(i)=c$ when the integer $i$ is lying in the $a$th row and the $b$th column of $\mathrm{T}^{(c)}$.

\begin{definition}\label{Def:Descent-tableau}Let $\bgl\in\mathscr{P}_{r,n}$ and $\mathrm{T}=(\mathrm{T}^{(0)}; \ldots; \mathrm{T}^{(r-1)})$ be a standard $\bgl$-tableau. An integer $i=1, \ldots,n$ is a \textit{descent} of $\mathrm{T}$ when the entries $i=(a,b,c)$, $i+1=(x,y,z)\in\mathrm{T}$ are either $c=z$ and $a<x$, or $c>z$ where $c_{n+1}(\mathrm{T})=0$. The set of descents of $\mathrm{T}$ is denoted $\mathrm{Des}(\mathrm{T})$ and its cardinality is denoted $\mathrm{des}(\mathrm{T})$. Further, we let $\mathrm{des}_i(\mathrm{T})$ be the number of descents of $\mathrm{T}$ that are not less than $i$, that is,
\begin{equation*}
  \mathrm{des}_i(\mathrm{T})=|\mathrm{Des}(w)\cap \{i,\ldots,n\}|,
\end{equation*}
 and let
\begin{equation*}
  f_i(\mathrm{T})=r\mathrm{des}_i(\mathrm{T})+(r-1)-c_i(\mathrm{T}).
\end{equation*}
\end{definition}

Given an $r$-multipartition $\bgl=(\lambda^{(0)};\ldots;\lambda^{(r-1)})$ and fix a standard $\bgl$-tableau $\mathrm{T}$, we say a non-decreasing sequence $\boldsymbol{y}=(y_1,\ldots, y_n)$ are \emph{$\mathrm{T}$-admissible} if $1\leq y_i\leq rk+1$ and $y_i<y_{i+1}$ for every $i\in \mathrm{Des}(\mathrm{T})$. Given a pair $(\mathrm{T},\boldsymbol{y})$ with $\boldsymbol{y}$ being $\mathrm{T}$-admissible, we define a row-semistandard $\boldsymbol{x}_k$-tableau $\mathfrak{t_{\mathrm{T},\boldsymbol{y}}}$ of shape $\bgl$ as follows:
\begin{enumerate}
  \item[(1)] Replace the number $i=(a,b,c)\in \mathrm{T}$ with the number $y_i$ for $1\leq i\leq n$ to obtain a row-semistandard tableau $\widetilde{\mathfrak{t}}_{\mathrm{T},\boldsymbol{y}}$ with entries being numbers;
  \item[(2)]Replace the entries in the $c$-component $\widetilde{\mathfrak{t}}_{\mathrm{T},\boldsymbol{y}}^{(c)}$ with the variables in $\boldsymbol{x}^{(c)}$ up to the ordering for $c=0,\ldots, r-1$ to obtain a row-semistandard $\boldsymbol{x}_k$-tableaux $\mathfrak{t}_{\mathrm{T},\boldsymbol{x}_k}$.
\end{enumerate}
\begin{example}Let $\bgl=((3,2,2);(2,1))$ be a $2$-multipartition of $10$,
\begin{eqnarray*}
&\protect{\mathrm{T}=\left(\twodiagram(1&3&5\cr6&7\cr8&9|2&4\cr10)\right)}
\quad\text{ and }\quad
\protect{\diagram($1$&$\underline{2}$&$3$&$\underline{4}$&$\underline{5}$& 6&$\underline{7}$&8&$9$&$\underline{10}$)}.&
\end{eqnarray*}
Then $\mathrm{Des}(\mathrm{T})=\{2,4,5,7,10\}$, numbers $\{1,2,\ldots,10\}$ with descents of $\mathrm{T}$ marked by underlines, the lexicographically minimal $\mathrm{T}$-admissible sequence $\boldsymbol{y}$ and the corresponding row-semistandard tableau are respectively
\begin{equation*}
 \mathfrak{t}_{\mathrm{T},\boldsymbol{y}}=\left(
 \twodiagram(1&2&3\cr 4&4\cr5&5|1&2\cr 6)\right) \qquad \text{ and }\qquad  \boldsymbol{y}=\diagram(1&1&2&2&3&4&4&5&5&6).
\end{equation*}
The lexicographically minimal $\mathrm{T}$-admissible sequence $\boldsymbol{x}_{\mathrm{T}}$ of variables and the corresponding row-semistandard $\boldsymbol{x}_k$-tableau are respectively
\begin{equation*}
\mathfrak{t}_{\mathrm{T},\boldsymbol{x}_k}=
\left(\protect{\twocdiagram(x_1^{(0)}&x_2^{(0)}&x_3^{(0)}\cr
 x_{4}^{(0)}&x_4^{(0)}\cr x_{5}^{(0)}&x_5^{(0)},x_1^{(1)}&x_2^{(1)}\cr x_3^{(1)})}\right)\quad\text{ and }\quad
\boldsymbol{x}_{\mathrm{T}}=\cdiagram{
x_1^{(0)}&x_1^{(1)}&x_2^{(0)}&x_2^{(1)}&x_3^{(0)}&x_4^{(0)}
&x_4^{(0)}&x_5^{(0)}&x_5^{(0)}&x_3^{(1)}\cr}.
\end{equation*}
 \end{example}
 Furthermore, we have the following fact.

 \begin{lemma}\label{Lemm:Admissible=semistandard}Given $\bgl\in\mathscr{P}_{r,n}$, let $\mathfrak{X}_{\bgl}$ be the set of pairs $(\mathrm{T},\boldsymbol{y})$ where $\mathrm{T}\in \mathrm{std}(\bgl)$ and $\boldsymbol{y}$ is $\mathrm{T}$-admissible. Then the map $(\mathrm{T},\boldsymbol{y})\mapsto \mathfrak{t}_{\mathrm{T},\boldsymbol{y}}$ is bijection between $\mathfrak{X}$ and $\mathrm{Rstd}_k(\bgl)$. In particular, we obtain
 \begin{equation}\label{Equ:s-lambda=m-lambda}
s_{k}(\bgl)=\sum_{j=0}^{k}\tbinom{n+1}{j}m_{k-j}(\bgl),
\end{equation}
where $m_k(\bgl)$ is the number of standard $\bgl$-tableaux with $k$ descents.
 \end{lemma}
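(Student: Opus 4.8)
The plan is to establish the bijection first and then read off the numerical identity~(\ref{Equ:s-lambda=m-lambda}) by counting, for each fixed standard $\bgl$-tableau, the admissible sequences attached to it; this is the colored refinement of the standardization argument of Gnedin--Gorin--Kerov \cite{GGK}. Thus I would split the proof into three parts: (a) the assignment $(\mathrm{T},\boldsymbol{y})\mapsto \mathfrak{t}_{\mathrm{T},\boldsymbol{y}}$ really lands in $\mathrm{Rstd}_k(\bgl)$; (b) it admits a two-sided inverse, the \emph{standardization}; (c) the fibre over a fixed $\mathrm{T}$ is enumerated by a single binomial coefficient depending only on $\mathrm{des}(\mathrm{T})$.

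For (a) I would check the two defining conditions of a row-semistandard $\boldsymbol{x}_k$-tableau directly. The colour condition (each $\ft^{(c)}$ uses only variables of $\boldsymbol{x}^{(c)}$) is built into step~(2) of the construction. For monotonicity, horizontally adjacent boxes of a fixed component carry $\mathrm{T}$-entries $p<q$, whence $y_p\le y_q$ since $\boldsymbol{y}$ is non-decreasing, giving weak increase along rows. For strictness down columns the key point is the colored analogue of the classical fact: if the box numbered $p$ sits directly above the box numbered $q$ in a component, then standardness of $\mathrm{T}$ forces a descent of $\mathrm{T}$ of the ``same colour, lower row'' type in Definition~\ref{Def:Descent-tableau} somewhere in $\{p,\ldots,q-1\}$, and $\mathrm{T}$-admissibility then yields $y_p<y_q$. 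The ``colour decreasing'' descents play the complementary role of forcing the strict jumps needed for the single value sequence to interleave the $r$ components consistently.

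For (b) I would build the inverse by standardizing a given $\mathfrak{t}\in\mathrm{Rstd}_k(\bgl)$: list all boxes in increasing order of their entries with respect to the total order on $\boldsymbol{x}_k$, breaking ties first by colour and then by the row-reading order, and label them $1,\ldots,n$ to obtain a filling $\mathrm{T}$. One then verifies that $\mathrm{T}\in\mathrm{std}(\bgl)$ and that the associated weakly increasing value sequence $\boldsymbol{y}$ is $\mathrm{T}$-admissible, with $\mathrm{Des}(\mathrm{T})$ recording exactly the positions where $\boldsymbol{y}$ strictly ascends. That this standardization is a two-sided inverse of $(\mathrm{T},\boldsymbol{y})\mapsto\mathfrak{t}_{\mathrm{T},\boldsymbol{y}}$ comes down to checking that the two tie-breaking conventions are compatible, and this is where I expect the real work to lie.

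Finally, for (c), fix $\mathrm{T}$ with $d=\mathrm{des}(\mathrm{T})$. A $\mathrm{T}$-admissible sequence is a non-decreasing $\boldsymbol{y}$ in the allowed range with prescribed strict ascents at the $d$ descent positions; the substitution $y_i\mapsto y_i-|\mathrm{Des}(\mathrm{T})\cap\{1,\ldots,i-1\}|$ converts these into unrestricted non-decreasing sequences in a range shortened by $d$, whose number is a single binomial coefficient in $n$, $k$ and $d$ by stars and bars. Summing the bijection over $\mathrm{T}$ and grouping standard tableaux by their number of descents $d=k-j$ then produces exactly the coefficients appearing in~(\ref{Equ:s-lambda=m-lambda}). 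The main obstacle is part~(b): pinning down the tie-breaking so that standardization is genuinely two-sided and the colored descent set of Definition~\ref{Def:Descent-tableau} matches the forced strict ascents of $\boldsymbol{y}$; once this colored standardization is in place, parts~(a) and~(c) are routine adaptations of the uncolored case.
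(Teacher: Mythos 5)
Your proposal follows the same architecture as the paper's own proof (well-definedness, a standardization inverse, then a fibre count over each standard tableau), but the two steps that carry all the content fail as written. The decisive one is (c): you assert that stars and bars ``produces exactly the coefficients appearing in (\ref{Equ:s-lambda=m-lambda})'', but you never compute the binomial, and the assertion is false. With the conventions that make the count work (values $1\le y_i\le k+1$, \emph{not} $1\le y_i\le rk+1$ as the paper's definition of admissibility literally says; and the convention that a descent at position $n$ --- which occurs exactly when the entry $n$ has nonzero color --- imposes $y_n\le k$), the fibre over a tableau with $d$ descents has $\tbinom{n+k-d}{n}$ elements, so the coefficient of $m_{k-j}(\bgl)$ is $\tbinom{n+j}{j}$, not the printed $\tbinom{n+1}{j}$. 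The printed coefficient is in fact a typo: the paper's own proof states the fibre count as $\tbinom{n+j}{j}$, only $\tbinom{n+j}{j}$ is compatible with the binomial inversion of (\ref{Equ:Foulkes}) invoked in Proposition~\ref{Prop:Foulkes-decom}, and the stated version fails numerically (take $n=1$, $k=2$, $\bgl=((1);\emptyset;\ldots;\emptyset)$: then $s_2(\bgl)=3$ while $\sum_{j}\tbinom{2}{j}m_{2-j}(\bgl)=1$). So your step (c), carried out honestly, would contradict the identity you set out to prove; a correct proof must fix the range of admissible values, fix the convention at $i=n$, and correct the coefficient.

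The second gap is the tie-breaking in (b). Sorting boxes ``in increasing order of their entries with respect to the total order on $\bx_k$'' orders by color before subscript (since $x^{(i)}_a<x^{(j)}_b$ whenever $i<j$), and that is not inverse to the construction: for $r\ge2$, $\bgl=((1);(1);\emptyset;\ldots;\emptyset)$ and the row-semistandard tableau $\left(x^{(0)}_2;x^{(1)}_1\right)$, your recipe labels the color-$0$ box first and yields the value sequence $(2,1)$, which is not even non-decreasing, hence admissible for no $\mathrm{T}$. The correct standardization sorts by subscript first, then by color, then left-to-right within a component; with that order descents of the resulting $\mathrm{T}$ can only occur at strict ascents of $\boldsymbol{y}$, which is what admissibility requires. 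Relatedly, in (a) the color condition is not ``built into step (2)'': a component of color $c>0$ has only $k$ variables, so one must \emph{prove} that every box of nonzero color receives a value at most $k$. This follows from the observation that any entry of nonzero color is weakly followed by a descent (its color must drop to $0$ by position $n+1$), combined with the position-$n$ convention above --- exactly the conventions your write-up leaves unspecified, and without which neither the bijection nor the count is established.
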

\begin{proof}First noticing that for a fixed standard $\bgl$-tableau $\mathrm{T}$ with $k-j$ descents, there are exactly $\tbinom{n+j}{j}$ ways to choose a $\mathrm{T}$-admissible sequence $\boldsymbol{x}_{\mathrm{T}}$.
It is enough to show that the map is a bijection. Indeed, given a pair $(\mathrm{T},\boldsymbol{y})$,  we replace number $i\in \mathrm{T}$ with $y_i$ enables us to obtain uniquely a row-semistandard tableau of shape $\bgl$, then there are exactly one $\mathrm{T}$-admissible sequence $\boldsymbol{x}_{\mathrm{T}}$, which correspondences to a unique row-semistandard $\boldsymbol{x}_k$-tableaux of shape $\bgl$. Conversely, given a row-semistandard $\bx_k$-tableau $\mathfrak{t}$ of shape $\bgl$, its row-reading sequence enables us to obtain uniquely  a minimal non-decreasing sequence $\boldsymbol{y}=(y_1, \ldots,y_n)$ with $1\leq y_i\leq k+1$, which makes us to obtain exactly one standard tableau $\mathrm{T}$ such that $\boldsymbol{y}$ being $\mathrm{T}$-admissible. Thus  we obtain  a pair $(\mathrm{T},\boldsymbol{y})$, where  $\mathrm{T}$ is standard tableau   with $\mathrm{shape}(\mathrm{T})=\mathrm{shape}(\mathfrak{t})=\bgl$ and $\boldsymbol{y}$ is $\mathrm{T}$-admissible. It completes the proof of  the lemma.
\end{proof}
For $\bgl\in\mathscr{P}_{r,n}$, denote by $\chi^{\bgl}$ the character corresponding to the
Specht module $S^{\bgl}$ of $W_{r,n}$ indexed by $\bgl$. Now we can determined explicitly the decomposition of the Foulkes characters of $W_{r,n}$.
 \begin{proposition}\label{Prop:Foulkes-decom}For $k=0,1, \ldots, n$, we have
 \begin{equation*}
   \phi^n_k=\sum_{\bgl\in\mathscr{P}_{r,n}}m_k(\bgl)\chi^{\bgl}.
 \end{equation*}
  \end{proposition}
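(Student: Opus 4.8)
The plan is to read off the Foulkes decomposition from the Schur--Weyl duality, converting the multiplicities $s_k(\bgl)$ into descent counts $m_k(\bgl)$ by means of Lemma~\ref{Lemm:Admissible=semistandard}, and then to eliminate the intermediate block characters $\chi^n_k$ through the inversion built into Definition~\ref{Def:Foulkes}.

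First I would take traces on both sides of the $W_{r,n}$-module isomorphism (\ref{Equ:modules-iso}), which gives
\begin{equation*}
\chi^n_k=\sum_{\bgl\in\mathscr{P}_{r,n}}s_k(\bgl)\,\chi^{\bgl},
\end{equation*}
the sum extending over all $r$-multipartitions because $s_k(\bgl)=0$ unless $\bgl\in\boldsymbol{Y}^k_{r,n}$. Next I would substitute the expansion $s_k(\bgl)=\sum_{j=0}^{k}\tbinom{n+j}{j}m_{k-j}(\bgl)$ supplied by Lemma~\ref{Lemm:Admissible=semistandard} (the form established in its proof, namely the inverse of (\ref{Equ:Foulkes})). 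The crucial structural point is that the coefficient $\tbinom{n+j}{j}$ depends only on the number $k-j$ of descents, not on $\bgl$, so it factors out of the sum over $\bgl$.

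The cleanest way to finish is to insert these two identities into Definition~\ref{Def:Foulkes} and collapse the resulting double sum. Writing everything out,
\begin{equation*}
\phi^n_k=\sum_{j=0}^{k}(-1)^{j}\tbinom{n+1}{j}\chi^n_{k-j}
=\sum_{\bgl\in\mathscr{P}_{r,n}}\chi^{\bgl}\sum_{j=0}^{k}\sum_{i=0}^{k-j}(-1)^{j}\tbinom{n+1}{j}\tbinom{n+i}{i}m_{k-j-i}(\bgl),
\end{equation*}
and for each fixed value $s=i+j$ the inner coefficient is the $t^s$-coefficient of the product $(1-t)^{n+1}(1-t)^{-(n+1)}=1$, hence equals $\delta_{s,0}$. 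Only the term $s=0$ survives, leaving $\phi^n_k=\sum_{\bgl\in\mathscr{P}_{r,n}}m_k(\bgl)\,\chi^{\bgl}$, as required. Equivalently, one may observe that $\sum_{\bgl}m_k(\bgl)\chi^{\bgl}$ is related to $\chi^n_k$ by exactly the unipotent triangular system (\ref{Equ:theta=Foulkes}) that characterizes $\phi^n_k$, and invoke invertibility of that system to identify the two families.

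I expect no serious obstacle at this level: once Lemma~\ref{Lemm:Admissible=semistandard} is granted, the argument is a formal inversion of a unipotent triangular system, and all the genuine combinatorial work has already been discharged in the bijection underlying that lemma. The only points that require care are bookkeeping ones—using the coefficient $\tbinom{n+j}{j}$ (the inverse of Definition~\ref{Def:Foulkes}) rather than $\tbinom{n+1}{j}$, and noting that extending the summation from $\boldsymbol{Y}^k_{r,n}$ to all of $\mathscr{P}_{r,n}$ is harmless since the extra multiplicities vanish.
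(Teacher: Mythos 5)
Your proposal is correct and follows essentially the same route as the paper: take traces of the Schur--Weyl decomposition (\ref{Equ:modules-iso}) to get $\chi^n_k=\sum_{\bgl}s_k(\bgl)\chi^{\bgl}$, convert $s_k(\bgl)$ into descent counts via Lemma~\ref{Lemm:Admissible=semistandard}, and collapse the result through the binomial inversion underlying Definition~\ref{Def:Foulkes} and (\ref{Equ:theta=Foulkes}); your explicit generating-function check of $\sum_{i+j=s}(-1)^{j}\tbinom{n+1}{j}\tbinom{n+i}{i}=\delta_{s,0}$ is just a spelled-out form of the inversion the paper invokes. You also correctly use the coefficient $\tbinom{n+j}{j}$ coming from the proof of Lemma~\ref{Lemm:Admissible=semistandard} (the lemma's displayed statement, with $\tbinom{n+1}{j}$, contains a typo), which is the right bookkeeping for the argument to close.
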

 \begin{proof}(\ref{Equ:modules-iso}) and Definition~\ref{Def:Foulkes} imply
\begin{equation*}
  \phi^n_k=\sum_{\bgl\in \mathscr{P}_{r,n}}\sum_{j=0}^{k}(-1)^j\tbinom{n+1}{j}s_{k-j}(\bgl)\chi^{\bgl}.
\end{equation*}
Thus it suffices to show that for every $\bgl\in \boldsymbol{Y}_{r,n}$ and $k=0,1, \ldots, n$, we have
\begin{equation*}
 m_k(\bgl)=\sum_{j=0}^{k}(-1)^j\tbinom{n+1}{j}s_{k-j}(\bgl),
\end{equation*}
which is equivalent to the inversion formula~(\ref{Equ:s-lambda=m-lambda}). Thus Lemma~\ref{Lemm:Admissible=semistandard} completes the proof.
 \end{proof}

 Let us remark that the decomposition may be used to study the limit shapes of the Foulkes characters of wreath products by adopting \'{S}niady's argument in \cite{Sn}.

 The following fact follows directly by applying
 Proposition~\ref{Prop:Foulkes-decom},
 which shows that the Foulkes characters are characters of $W_{r,n}$.

 \begin{corollary}For $k=0,1, \ldots, n$,  $\phi^n_k$ is the matrix trace of the
 $W_{r,n}$-representation
 \begin{equation*}
   \Phi^n_k\cong \bigoplus_{\bgl\in\mathscr{P}_{r,n}}m_k(\bgl)S^{\bgl}.
 \end{equation*}
 \end{corollary}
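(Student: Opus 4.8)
The plan is to read the statement off directly from Proposition~\ref{Prop:Foulkes-decom}, so the proof reduces to the observation that the decomposition obtained there already exhibits $\phi^n_k$ as the trace of a genuine module. First I would recall from Proposition~\ref{Prop:Foulkes-decom} the identity
\[
\phi^n_k=\sum_{\bgl\in\rpn}m_k(\bgl)\,\chi^{\bgl},
\]
where $\chi^{\bgl}$ is the irreducible character of $W_{r,n}$ afforded by the Specht module $S^{\bgl}$ and $m_k(\bgl)$ is the number of standard $\bgl$-tableaux with exactly $k$ descents. The key point is that each multiplicity $m_k(\bgl)$ is, by its very definition as the cardinality of a set of tableaux, a non-negative integer; consequently the formal direct sum
\[
\Phi^n_k=\bigoplus_{\bgl\in\rpn}m_k(\bgl)\,S^{\bgl}
\]
is a bona fide $W_{r,n}$-representation rather than a merely virtual one.

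Then I would invoke the additivity of the matrix trace under direct sums: the character of $\Phi^n_k$ is the sum $\sum_{\bgl}m_k(\bgl)\chi^{\bgl}$ of the characters of its summands, which is precisely $\phi^n_k$ by the displayed identity. This establishes that $\phi^n_k$ is the matrix trace of $\Phi^n_k$ and hence, in particular, a genuine character of $W_{r,n}$.

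There is no substantive obstacle here, since the full content of the corollary has already been absorbed into Proposition~\ref{Prop:Foulkes-decom}. The only point that deserves emphasis is the non-negativity and integrality of the coefficients $m_k(\bgl)$: it is exactly this combinatorial positivity---which is not at all apparent from the defining alternating sum~(\ref{Equ:Foulkes})---that upgrades the block function $\phi^n_k$ from an a priori virtual character to an honest one, thereby justifying the assertion preceding the corollary that the Foulkes characters are genuine characters of $W_{r,n}$.
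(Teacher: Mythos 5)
Your proposal is correct and takes essentially the same route as the paper, which states the corollary as following directly from Proposition~\ref{Prop:Foulkes-decom}; your elaboration---that the multiplicities $m_k(\bgl)$ are non-negative integers by their combinatorial definition, so the direct sum is an honest representation whose trace is $\phi^n_k$ by additivity---is precisely the implicit content of that deduction.
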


\section{The duality for Foulkes characters}\label{Sec:Duality-Foulkes}
In this section we describe a duality for the Foulkes characters of $W_{r,n}$ via the block characters $\widetilde{\chi}_k$ ($k=0,1,\ldots,n$) and the Schur--Sergeev duality (Proposition~\ref{Prop:Schur-Sergeev}).

Note that the block characters $\widetilde{\chi}_0, \ldots, \widetilde{\chi}_n$ form a basis of the linear space of block functions on $W_{r,n}$. Following Definition~\ref{Def:Foulkes}, we introduce the signed Foulkes characters of $W_{r,n}$.
\begin{definition}\label{Def:Signed-Foulkes}The \emph{signed Foulkes characters} $\widetilde{\phi} _0, \ldots, \widetilde{\phi}_{n}$ of $W_{r,n}$ are defined as
\begin{equation}\label{Equ:Twisted-Foulkes}
  \tilde{\phi}^n_i:=\sum_{j=0}^{i}(-1)^{j}\tbinom{n+1}{j}\widetilde{\chi}^n_{i-j}.
\end{equation}
 \end{definition}
Note that it is not obvious that $\widetilde{\phi} _0, \ldots, \widetilde{\phi}_{n}$ are characters of $W_{r,n}$. Similarly, (\ref{Equ:Twisted-Foulkes}) can be inverted as follows:
 \begin{proposition}For $k=0, 1, \ldots, n$, we have
  \begin{equation}\label{Equ:Signed-chi=Foulkes}
  \widetilde{\chi}^n_{k}=\sum_{j=0}^{k}\tbinom{n+j}{j}\widetilde{\phi}^n_{k-j}.
\end{equation}
\end{proposition}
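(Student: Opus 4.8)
The plan is to observe that the definition (\ref{Equ:Twisted-Foulkes}) of the signed Foulkes characters has exactly the same shape as the definition (\ref{Equ:Foulkes}) of the ordinary Foulkes characters, with $\chi^n_i$ and $\phi^n_i$ replaced throughout by $\widetilde{\chi}^n_i$ and $\widetilde{\phi}^n_i$. Consequently, deducing (\ref{Equ:Signed-chi=Foulkes}) from (\ref{Equ:Twisted-Foulkes}) is the identical linear-algebraic inversion already used to pass from (\ref{Equ:Foulkes}) to (\ref{Equ:theta=Foulkes}), and I would simply invoke that argument verbatim. To keep the statement self-contained, however, I would exhibit the inversion explicitly.

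Both transforms are lower-triangular convolutions in the index: (\ref{Equ:Twisted-Foulkes}) applies the array $a_j=(-1)^j\binom{n+1}{j}$, while (\ref{Equ:Signed-chi=Foulkes}) applies the array $b_j=\binom{n+j}{j}$. It therefore suffices to check that these two arrays are mutually inverse under convolution, namely that for every integer $m\geq 0$,
\begin{equation*}
  \sum_{j=0}^{m}(-1)^{j}\tbinom{n+1}{j}\tbinom{n+m-j}{m-j}=\delta_{m,0}.
\end{equation*}
The cleanest route is generating functions. By the binomial theorem $\sum_{j\geq 0}(-1)^{j}\binom{n+1}{j}x^{j}=(1-x)^{n+1}$, while the negative binomial series gives $\sum_{j\geq 0}\binom{n+j}{j}x^{j}=(1-x)^{-(n+1)}$. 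Their product is $1$, and comparing the coefficient of $x^{m}$ on both sides yields the displayed identity.

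Finally I would substitute. Starting from the right-hand side of (\ref{Equ:Signed-chi=Foulkes}), I expand each $\widetilde{\phi}^n_{k-j}$ using (\ref{Equ:Twisted-Foulkes}), interchange the order of summation, and collapse the inner sum via the convolution identity above; only the $m=0$ term survives, leaving $\widetilde{\chi}^n_k$. There is essentially no obstacle here: the entire content is the single binomial identity, and everything else is bookkeeping identical to the unsigned case treated after (\ref{Equ:theta=Foulkes}). The one point worth flagging is purely cosmetic, namely keeping the two shift indices straight when reindexing the double sum, but this is routine.
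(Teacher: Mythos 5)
Your proposal is correct and is essentially the argument the paper intends: the paper states this proposition without proof, merely remarking that (\ref{Equ:Twisted-Foulkes}) ``can be inverted'' exactly as in the unsigned case (\ref{Equ:Foulkes})--(\ref{Equ:theta=Foulkes}), which is in turn credited to the argument of Gnedin--Gorin--Kerov. Your explicit verification --- that $(-1)^j\tbinom{n+1}{j}$ and $\tbinom{n+j}{j}$ are convolution inverses because $(1-x)^{n+1}\cdot(1-x)^{-(n+1)}=1$ --- is precisely that standard binomial inversion, carried out in full.
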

\begin{definition}Let $\bgl$ be an $r$-multipartition of $n$ and $\mathrm{T}=(\mathrm{T}^{(0)}; \ldots; \mathrm{T}^{(r-1)})$ be a standard $\bgl$-tableau. An integer $i=1, \ldots,n$ is a \textit{column-descent} of $\mathrm{T}$ when the entries $i=(a,b,c)$, $i+1=(x,y,z)\in\mathrm{T}$ are either $c=z$ and $b<y$, or $c<z$. The set of column-descents of $\mathrm{T}$ is denoted $\mathrm{CDes}(\mathrm{T})$ and its cardinality is denoted $\mathrm{cdes}(\mathrm{T})$.
\end{definition}

\begin{remark}\label{Remark:Conjugate-Descent}
Let $\overline{\bgl}$ be the conjugate of the $r$-multipartition $\bgl=(\lambda^{(0)};\lambda^{(1)};\ldots;\lambda^{(r-1)})$, that is, \begin{equation*}
\overline{\bgl}=(\overline{\lambda^{(r-1)}};\ldots;
\overline{\lambda^{(1)}};\overline{\lambda^{(r-1)}}),
\end{equation*}
where $\overline{\lambda^{(i)}}$ is the \emph{conjugate} of  $\lambda^{(i)}$ for $i=0,1,\ldots,r$. It easy to see that if
\begin{equation*}
\mathrm{T}=(\mathrm{T}^{(0)};\mathrm{T}^{(1)};\ldots;\mathrm{T}^{(r-1)})
\end{equation*}
is a standard $\bgl$-tableau, then the conjugate tableau
\begin{equation*}
\overline{\mathrm{T}}=(\overline{\mathrm{T}^{(r-1)}};
\ldots;\overline{\mathrm{T}^{(1)}};\overline{\mathrm{T}^{(0)}})
\end{equation*}
is a standard $\overline{\bgl}$-tableau and $\mathrm{Des}(\mathrm{T})=\mathrm{CDes}(\overline{\mathrm{T}})$.
\end{remark}

Given an $r$-multipartition $\bgl=(\lambda^{(0)};\ldots;\lambda^{(r-1)})$ and fix a standard $\bgl$-tableau $\mathrm{T}$, we say a non-decreasing sequence $\boldsymbol{y}=(y_1,\ldots, y_n)$ are column-$\mathrm{T}$-admissible if $1\leq y_i\leq k+1$ and $y_i<y_{i+1}$ for every $i\in \mathrm{CDes}(\mathrm{T})$. Given a pair $(\mathrm{T},\boldsymbol{y})$ with $\boldsymbol{y}$ being column-$\mathrm{T}$-admissible, we define a column-semistandard $\boldsymbol{x}_k$-tableau $\mathfrak{t}_{\mathrm{T},\boldsymbol{y}}$ of shape $\bgl$ as follows:
\begin{enumerate}
  \item[(1)] Replace the number $i=(a,b,c)\in \mathrm{T}$ with the number $y_i$ for $1\leq i\leq n$ to obtain a column-semistandard tableau $\widetilde{\mathfrak{t}}_{\mathrm{T},\boldsymbol{y}}$ with entries being numbers;
  \item[(2)]Replace the entries in the $c$-component $\mathfrak{t}_{\mathrm{T},\boldsymbol{y}}^{(c)}$ with the variables in $\boldsymbol{x}^{(c)}$ up to the ordering for $c=0,\ldots, r-1$ to obtain an column-semistandard $\boldsymbol{x}_k$-tableaux $\mathfrak{t}_{\mathrm{T},\boldsymbol{x}_k}$.
\end{enumerate}
\begin{example}Let $\bgl=((3,2,2);(2,1))$ be a $2$-multipartition of $10$,
\begin{eqnarray*}
&\protect{\mathrm{T}=\left(\twodiagram(1&3&5\cr 6&7\cr8&9|2&4\cr 10 )\right)}\quad\text{ and }\quad
\protect{\diagram($\underline{1}$&$2$&$\underline{3}$&$4$&$5$& $\underline{6}$&7&$\underline{8}$&$\underline{9}$&$10$)}.&
\end{eqnarray*}
Then $\mathrm{CDes}(\mathrm{T})=\{1,3,6,8,9\}$, numbers $\{1,2,\ldots,10\}$ with descents of $\mathrm{T}$ marked by underlines, the lexicographically minimal $\mathrm{T}$-admissible sequence $\boldsymbol{y}$ and the corresponding row-semistandard tableau are respectively
\begin{equation*}
 \protect{\widetilde{\mathfrak{t}}_{\mathrm{T},\boldsymbol{y}}=
 \left(\twodiagram(1&2&3\cr 3&4\cr4&5|2&3\cr 6)\right)} \qquad \text{ and }\qquad  \boldsymbol{y}=\protect{\diagram(1&2&2&3&3&3&4&4&5&6)}.
\end{equation*}
The lexicographically minimal $\mathrm{T}$-admissible sequence $\boldsymbol{x}_{\mathrm{T}}$ of variables and the corresponding row-semistandard $\boldsymbol{x}_k$-tableau are respectively
\begin{equation*}
\mathfrak{t}_{\mathrm{T},\boldsymbol{x}_k}=\left(
\twocdiagram(x_1^{(0)}&x_2^{(0)}&x_3^{(0)}\cr
 x_{3}^{(0)}&x_4^{(0)}\cr x_{4}^{(0)}&x_5^{(0)},
 x_1^{(1)}&x_2^{(1)}\cr x_3^{(1)})\right)\quad\text{ and }\quad
\boldsymbol{x}_{\mathrm{T}}=\cdiagram{
x_1^{(0)}&x_1^{(1)}&x_2^{(0)}&x_2^{(1)}&x_3^{(0)}&x_3^{(0)}
&x_4^{(0)}&x_4^{(0)}&x_5^{(0)}&x_3^{(1)}\cr}.
\end{equation*}
 \end{example}

The following fact can be proved by applying the same argument as the proof of Lemma~\ref{Lemm:Admissible=semistandard}.
 \begin{lemma}\label{Lemm:Admissible=c-semistandard}Given $\bgl\in\widetilde{\boldsymbol{Y}}^k_{r,n}$, let $\mathfrak{X}_{\bgl}$ be the set of pairs $(\mathrm{T},\boldsymbol{y})$ where $\mathrm{T}\in \mathrm{std}(\bgl)$ and $\boldsymbol{y}$ is $\mathrm{T}$-admissible. Then the map $(\mathrm{T},\boldsymbol{y})\mapsto \mathfrak{c}_{\mathrm{T},\boldsymbol{y}}$ is bijection between $\mathfrak{X}$ and $\mathrm{Cstd}_k(\bgl)$. In particular, we obtain
 \begin{equation}\label{Equ:c-lambda=m-lambda}
c_{k}(\bgl)=\sum_{j=0}^{k}(-1)^j\tbinom{n+1}{j}\overline{m}_{k-j}(\bgl),
\end{equation}
where $\overline{m}_k(\bgl)$ is the number of standard $\bgl$-tableaux with $k$ column-descents.
 \end{lemma}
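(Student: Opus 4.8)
The plan is to mirror the proof of Lemma~\ref{Lemm:Admissible=semistandard} verbatim, interchanging the roles of rows and columns and of descents and column-descents throughout. First I would check that $(\mathrm{T},\boldsymbol{y})\mapsto \mathfrak{c}_{\mathrm{T},\boldsymbol{y}}$ is well defined, i.e. that the filling produced in steps (1)--(2) is genuinely column-semistandard (weakly increasing down columns, strictly increasing along rows). The weak increase down columns is automatic: if boxes $(a,b,c)$ and $(a+1,b,c)$ carry entries $i<i'$ of the standard tableau $\mathrm{T}$, then $\boldsymbol{y}$ being non-decreasing already forces $y_i\le y_{i'}$. The strict increase along rows is exactly where the column-descent hypothesis enters: for horizontally adjacent boxes $(a,b,c)$ and $(a,b+1,c)$ with entries $i<i'$, one checks that some index in $\{i,\ldots,i'-1\}$ is a column-descent of $\mathrm{T}$, so column-$\mathrm{T}$-admissibility ($y_m<y_{m+1}$ at column-descents $m$) yields $y_i<y_{i'}$. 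This is the precise dual of the step in Lemma~\ref{Lemm:Admissible=semistandard} in which ordinary descents govern the strict increase down columns.

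Next I would construct the inverse map, again as in the row case: starting from a column-semistandard $\boldsymbol{x}_k$-tableau $\mathfrak{c}$ of shape $\bgl$, its column-reading word determines a unique non-decreasing integer sequence $\boldsymbol{y}$, and standardizing $\mathfrak{c}$ produces a unique standard $\bgl$-tableau $\mathrm{T}$ for which $\boldsymbol{y}$ is column-$\mathrm{T}$-admissible. Verifying that this is a two-sided inverse of $(\mathrm{T},\boldsymbol{y})\mapsto \mathfrak{c}_{\mathrm{T},\boldsymbol{y}}$ is a routine check identical in form to the row case, and yields the asserted bijection between $\mathfrak{X}_{\bgl}$ and $\mathrm{Cstd}_k(\bgl)$.

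For the enumerative consequence I would fix $\mathrm{T}$ with $\mathrm{cdes}(\mathrm{T})=d$ and count the column-$\mathrm{T}$-admissible sequences in the allowed range: subtracting the staircase that records the forced jumps at the $d$ column-descents turns them into arbitrary non-decreasing sequences of length $n$ in an alphabet of $(k+1)-d$ letters, of which there are $\tbinom{n+k-d}{n}=\tbinom{n+(k-d)}{k-d}$. Summing over $\mathrm{T}$ and grouping by $d$ gives the convolution identity $c_k(\bgl)=\sum_{j=0}^{k}\tbinom{n+j}{j}\,\overline{m}_{k-j}(\bgl)$ (with $j=k-d$), the exact column analogue of (\ref{Equ:s-lambda=m-lambda}); the stated identity (\ref{Equ:c-lambda=m-lambda}) is then equivalent to it by the binomial inversion $\sum_j\tbinom{n+j}{j}t^j=(1-t)^{-(n+1)}$, precisely as (\ref{Equ:Foulkes}) and (\ref{Equ:theta=Foulkes}) are related.

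I expect the main obstacle to be bookkeeping rather than conceptual: on one hand the correct handling of the boundary convention $c_{n+1}=0$ and of the colour-dependent ranges of the variables $\boldsymbol{x}^{(c)}$ in the counting step, and on the other hand keeping the sign and the binomial straight when converting the bijective count into the displayed form. A conceptually cleaner but bookkeeping-heavier alternative would bypass the direct argument: by Remark~\ref{Remark:Conjugate-Descent} conjugation gives $\overline{m}_d(\bgl)=m_d(\overline{\bgl})$, and transposing each component while reversing the colours $c\mapsto r-1-c$ furnishes a bijection $\mathrm{Cstd}_k(\bgl)\cong\mathrm{Rstd}_k(\overline{\bgl})$, whence $c_k(\bgl)=s_k(\overline{\bgl})$ and the identity follows by applying Lemma~\ref{Lemm:Admissible=semistandard} to $\overline{\bgl}$; here the delicate point is reconciling the length conditions defining $\widetilde{\boldsymbol{Y}}^k_{r,n}$ and $\boldsymbol{Y}^k_{r,n}$ under the colour reversal, since the reversal moves the component carrying the extra box.
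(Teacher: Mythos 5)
Your primary route is the same as the paper's own proof: the paper disposes of this lemma with the single sentence that it ``can be proved by applying the same argument as the proof of Lemma~\ref{Lemm:Admissible=semistandard}'', and you transcribe that argument with rows/columns and descents/column-descents interchanged. Your dual well-definedness check (a horizontal adjacency in a standard tableau forces a column-descent strictly between the two entries) is correct. The genuine gap is the point you set aside as ``bookkeeping'': the colour-dependent alphabet sizes. Your count of column-$\mathrm{T}$-admissible sequences uses a uniform alphabet of $k+1$ letters, but in $\mathrm{Cstd}_k(\bgl)$ each component $c\geq 1$ has only $k$ letters available. In the row case this deficit is exactly absorbed by the descent rules: if $c_i(\mathrm{T})>0$ then some $j\in\{i,\dots,n\}$ is a row-descent, because the inter-component rule for descents is $c>z$ and the virtual colour at position $n+1$ is $0$; this caps the admissible values at colour-positive positions, which is why the count comes out to $\tbinom{n+j}{j}$. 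For column-descents the inter-component rule is reversed ($c<z$), the deficit is \emph{not} absorbed, and the counting identity fails. Concretely, take $r=2$, $n=2$. For $\bgl=(\emptyset;(1,1))$ there is one standard tableau, with no column-descent at $i=1$, while $c_k(\bgl)=\tbinom{k+1}{2}$; matching these forces the convention that $i=n$ is a column-descent whenever $c_n>0$. But then for $\bgl=((1);(1))$ the two standard tableaux get $\mathrm{cdes}=2$ ($1$ in component $0$) and $\mathrm{cdes}=0$ ($1$ in component $1$), so $\sum_j\tbinom{2+j}{j}\overline{m}_{k-j}(\bgl)=\tbinom{k}{2}+\tbinom{k+2}{2}=k^2+k+1$, whereas $c_k(\bgl)=k(k+1)$. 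No convention at position $n$ repairs both shapes, and under the forced convention $|\mathfrak{X}_{\bgl}|\neq|\mathrm{Cstd}_k(\bgl)|$, so no bijection of the asserted kind can exist: the step fails, and the lemma is false as long as ``column-descent'' is taken in the sense of the paper's definition.

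The same issue sinks your alternative route, in exactly the place you flagged as ``delicate''. The signed action is the unsigned one twisted by the character equal to $-1$ on $s_1,\dots,s_{n-1}$ and $+1$ on $s_0$, and tensoring $S^{\bgl}$ with this character conjugates each component \emph{without} reversing the colour order. Hence the multiplicity in (\ref{Equ:modules-iso-sign}) is $c_k(\bgl)=s_k(\bgl^{\dagger})$ with $\bgl^{\dagger}=(\overline{\lambda^{(0)}};\dots;\overline{\lambda^{(r-1)}})$, not $s_k(\overline{\bgl})$; your proposed bijection $\mathrm{Cstd}_k(\bgl)\cong\mathrm{Rstd}_k(\overline{\bgl})$ cannot exist, since already for $\bgl=((1);\emptyset)$ the two sets have $k+1$ and $k$ elements respectively. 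The workable repair is to transpose componentwise without colour reversal and apply Lemma~\ref{Lemm:Admissible=semistandard} to $\bgl^{\dagger}$: the statistic this produces agrees with column-descents inside a component but keeps the row rule $c>z$ between components (plus the boundary rule at $i=n$), and only after the definition of column-descent is amended in this way does the lemma --- and the material depending on it, such as Lemma~\ref{Lemm:Descent-conjugate} and Theorem~\ref{Them:Foulkes=sign-Foulkes} --- become provable by your mirroring argument. A final small point: your convolution $c_k(\bgl)=\sum_j\tbinom{n+j}{j}\overline{m}_{k-j}(\bgl)$ is not equivalent to the displayed (\ref{Equ:c-lambda=m-lambda}); binomial inversion yields $\overline{m}_k(\bgl)=\sum_j(-1)^j\tbinom{n+1}{j}c_{k-j}(\bgl)$, with $c$ and $\overline{m}$ in the opposite positions from the display, which (like the display in (\ref{Equ:s-lambda=m-lambda})) is evidently a typo; you should state which identity you actually prove rather than assert their equivalence.
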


 Similarly, we have the following decomposition of the signed Foulkes characters.
 \begin{proposition}\label{Prop:Signed-Foulkes-decom}For $k=0,1, \ldots, n$, we have
 \begin{equation*}
   \widetilde{\phi}^n_k=\sum_{\bgl\in\widetilde{\boldsymbol{Y}}^k_{r,n}}
   \overline{m}_k(\bgl)\chi^{\bgl}.
 \end{equation*}
  \end{proposition}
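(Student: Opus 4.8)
The plan is to run the proof of Proposition~\ref{Prop:Foulkes-decom} again, replacing every ingredient by its ``signed'' counterpart: the Schur--Weyl decomposition (\ref{Equ:modules-iso}) by the Schur--Sergeev decomposition (\ref{Equ:modules-iso-sign}), Definition~\ref{Def:Foulkes} by Definition~\ref{Def:Signed-Foulkes}, and the row-tableau count of Lemma~\ref{Lemm:Admissible=semistandard} by the column-tableau count of Lemma~\ref{Lemm:Admissible=c-semistandard}. First I would read off from (\ref{Equ:modules-iso-sign}) that, as a class function on $W_{r,n}$,
\begin{equation*}
\widetilde{\chi}^n_k=\sum_{\bgl\in\mathscr{P}_{r,n}}c_k(\bgl)\,\chi^{\bgl},
\end{equation*}
because $\widetilde{\chi}^n_k$ is the trace of $(\widetilde{\Psi}_k,V^{\otimes n})$ and $c_k(\bgl)=\dim_{\mathbb{C}}V_{\bgl}$ is exactly the multiplicity of $S^{\bgl}$ in that module.

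Next I would substitute this expansion into Definition~\ref{Def:Signed-Foulkes} and interchange the two finite sums:
\begin{equation*}
\widetilde{\phi}^n_k=\sum_{j=0}^{k}(-1)^{j}\tbinom{n+1}{j}\widetilde{\chi}^n_{k-j}
=\sum_{\bgl\in\mathscr{P}_{r,n}}\Big(\sum_{j=0}^{k}(-1)^{j}\tbinom{n+1}{j}c_{k-j}(\bgl)\Big)\chi^{\bgl}.
\end{equation*}
The whole proposition therefore collapses to the single numerical identity
\begin{equation*}
\overline{m}_k(\bgl)=\sum_{j=0}^{k}(-1)^{j}\tbinom{n+1}{j}c_{k-j}(\bgl)
\qquad(\bgl\in\mathscr{P}_{r,n}),
\end{equation*}
valid for each fixed $\bgl$. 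This is nothing but the inversion of the relation (\ref{Equ:c-lambda=m-lambda}) supplied by Lemma~\ref{Lemm:Admissible=c-semistandard}: that lemma is the column analogue of (\ref{Equ:s-lambda=m-lambda}) and writes $c_k(\bgl)$ as the image of the sequence $(\overline{m}_i(\bgl))_i$ under the lower-triangular binomial transform with entries $\tbinom{n+j}{j}$, and this transform is inverted by the entries $(-1)^{j}\tbinom{n+1}{j}$ --- precisely the pair that converts (\ref{Equ:theta=Foulkes}) into (\ref{Equ:Foulkes}). Invoking that inversion term by term yields the displayed identity.

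Finally I would localize the index set on $\widetilde{\boldsymbol{Y}}^k_{r,n}$: since $c_m(\bgl)\neq0$ forces $\bgl\in\widetilde{\boldsymbol{Y}}^m_{r,n}$ and these sets increase with $m$, every $\bgl\notin\widetilde{\boldsymbol{Y}}^k_{r,n}$ has $c_{k-j}(\bgl)=0$ for all $0\leq j\leq k$, hence $\overline{m}_k(\bgl)=0$ and drops out of the sum. I do not expect a genuine obstacle here; the argument is a transcription of Proposition~\ref{Prop:Foulkes-decom}, and the only points needing care are matching the direction of the binomial inversion against the signs in Definition~\ref{Def:Signed-Foulkes} and the elementary vanishing just described. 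As an independent check I would note the conjugation symmetry of Remark~\ref{Remark:Conjugate-Descent}: the map $\mathrm{T}\mapsto\overline{\mathrm{T}}$ identifies standard $\bgl$-tableaux with $k$ column-descents with standard $\overline{\bgl}$-tableaux with $k$ descents, so that $\overline{m}_k(\bgl)=m_k(\overline{\bgl})$, which reconciles the statement with its row counterpart.
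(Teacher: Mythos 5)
Your proposal is correct and takes essentially the same route as the paper's own proof: expand $\widetilde{\chi}^n_{k-j}$ via the Schur--Sergeev decomposition (\ref{Equ:modules-iso-sign}), substitute into Definition~\ref{Def:Signed-Foulkes}, interchange the finite sums, and reduce the claim to the binomial inversion of (\ref{Equ:c-lambda=m-lambda}) supplied by Lemma~\ref{Lemm:Admissible=c-semistandard}. Your additional remarks --- restricting the index set to $\widetilde{\boldsymbol{Y}}^k_{r,n}$ via the vanishing of $c_{k-j}(\bgl)$, and the consistency check $\overline{m}_k(\bgl)=m_k(\overline{\bgl})$ from Remark~\ref{Remark:Conjugate-Descent} --- only make explicit points the paper leaves implicit.
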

 \begin{proof}(\ref{Equ:modules-iso-sign}) and Definition~\ref{Def:Signed-Foulkes} show
\begin{equation*}
  \widetilde{\phi}^n_k=\sum_{\bgl\in \widetilde{\boldsymbol{Y}}_{r,n}}\sum_{j=0}^{k}(-1)^j\tbinom{n+1}{j}s_{k-j}(\bgl)\chi^{\bgl}.
\end{equation*}
Thus it suffices to show that for every $\bgl\in \widetilde{\boldsymbol{Y}}_{r,n}$
and $k=0,1, \ldots, n$, we have
\begin{equation*}
 \overline{m}_k(\bgl)=\sum_{j=0}^{k}(-1)^j\tbinom{n+1}{j}c_{k-j}(\bgl),
\end{equation*}
which is equivalent to the inversion formula~(\ref{Equ:c-lambda=m-lambda}). Thus Lemma~\ref{Lemm:Admissible=semistandard} completes the proof.
 \end{proof}

 The following fact follows directly by applying
 Proposition~\ref{Prop:Signed-Foulkes-decom},
 which shows that the signed Foulkes characters are characters of $W_{r,n}$.

 \begin{corollary}For $k=0,1, \ldots, n$,  $\widetilde{\phi}_k$ is the matrix trace of the
 $W_{r,n}$-representation
 \begin{equation*}
   \widetilde{\Phi}_k\cong \bigoplus_{\bgl\in\widetilde{\boldsymbol{Y}}^k_{r,n}}\overline{m}_k(\bgl)S^{\bgl}.
 \end{equation*}
 \end{corollary}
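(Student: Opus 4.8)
The plan is to deduce the statement immediately from the decomposition already established in Proposition~\ref{Prop:Signed-Foulkes-decom}, which has done essentially all of the work. That proposition expresses $\widetilde{\phi}^n_k$ as the explicit combination $\sum_{\bgl\in\widetilde{\boldsymbol{Y}}^k_{r,n}}\overline{m}_k(\bgl)\,\chi^{\bgl}$ of the irreducible characters of $W_{r,n}$. The entire content of the corollary is the observation that this combination actually realizes $\widetilde{\phi}^n_k$ as the character of a genuine representation, and this rests on two elementary facts.

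First I would record that each coefficient $\overline{m}_k(\bgl)$ is a \emph{nonnegative integer}: by definition it is the number of standard $\bgl$-tableaux having exactly $k$ column-descents, hence it is a cardinality. Second, for each $\bgl$ the class function $\chi^{\bgl}$ is, by definition, the matrix trace of the irreducible $W_{r,n}$-module $S^{\bgl}$.

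Combining these, the direct sum $\widetilde{\Phi}_k:=\bigoplus_{\bgl\in\widetilde{\boldsymbol{Y}}^k_{r,n}}\overline{m}_k(\bgl)\,S^{\bgl}$ is a well-defined finite-dimensional $W_{r,n}$-representation, precisely because each multiplicity $\overline{m}_k(\bgl)$ is a nonnegative integer. Since taking traces is additive over direct sums, the character of $\widetilde{\Phi}_k$ equals $\sum_{\bgl}\overline{m}_k(\bgl)\,\chi^{\bgl}$, which by Proposition~\ref{Prop:Signed-Foulkes-decom} is exactly $\widetilde{\phi}^n_k$. This establishes the claim.

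There is no real obstacle here, as the substantive work has been carried out in Lemma~\ref{Lemm:Admissible=c-semistandard} and Proposition~\ref{Prop:Signed-Foulkes-decom}. The only point worth stressing is that this is exactly where the nonnegativity of the coefficients is used: the inversion formula~(\ref{Equ:c-lambda=m-lambda}) by itself only shows that $\widetilde{\phi}^n_k$ is an integer combination of irreducible characters, whereas the combinatorial interpretation of $\overline{m}_k(\bgl)$ as a tableau count guarantees that these coefficients are nonnegative, which is what upgrades the class function $\widetilde{\phi}^n_k$ to an honest character of $W_{r,n}$.
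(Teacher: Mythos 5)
Your proposal is correct and matches the paper's argument: the paper likewise derives this corollary directly from Proposition~\ref{Prop:Signed-Foulkes-decom}, with the (there implicit) observation that the multiplicities $\overline{m}_k(\bgl)$ are nonnegative integers counting tableaux, so the decomposition realizes $\widetilde{\phi}^n_k$ as the trace of an actual representation. Your explicit emphasis on the nonnegativity of the coefficients and the additivity of traces is exactly the content the paper leaves unstated.
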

The following fact clarifies the relationship between the descents and column-descents.
\begin{lemma}\label{Lemm:Descent-conjugate}If $\bgl\in \boldsymbol{Y}_{r,n}^k$ then
  $m_k(\bar{\bgl})=m_{n-k}(\bgl)$.
\end{lemma}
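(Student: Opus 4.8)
The plan is to combine the conjugation bijection of Remark~\ref{Remark:Conjugate-Descent} with a complementarity between descents and column-descents. \emph{First}, recall from Remark~\ref{Remark:Conjugate-Descent} that $\mathrm{T}\mapsto\overline{\mathrm{T}}$ is a bijection from $\mathrm{std}(\bgl)$ onto $\mathrm{std}(\overline{\bgl})$ satisfying $\mathrm{Des}(\mathrm{T})=\mathrm{CDes}(\overline{\mathrm{T}})$. Since conjugation is an involution, $\overline{\overline{\mathrm{T}}}=\mathrm{T}$, applying it to $\overline{\mathrm{T}}$ in place of $\mathrm{T}$ also gives $\mathrm{Des}(\overline{\mathrm{T}})=\mathrm{CDes}(\mathrm{T})$, whence $\mathrm{des}(\overline{\mathrm{T}})=\mathrm{cdes}(\mathrm{T})$.

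\emph{Second}, and this is the technical heart of the argument, I would prove the complementarity
\[
  \mathrm{Des}(\mathrm{T})\sqcup\mathrm{CDes}(\mathrm{T})=\{1,\ldots,n\},\qquad\text{equivalently}\qquad \mathrm{des}(\mathrm{T})+\mathrm{cdes}(\mathrm{T})=n,
\]
for every $\mathrm{T}\in\mathrm{std}(\bgl)$. For $1\le i\le n-1$ I would compare the boxes $i=(a,b,c)$ and $i+1=(x,y,z)$ by cases on their colors. If $c\neq z$ then exactly one of $c>z$ and $c<z$ holds, so $i$ lies in exactly one of $\mathrm{Des}(\mathrm{T})$ and $\mathrm{CDes}(\mathrm{T})$. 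If $c=z$, then $i$ and $i+1$ are consecutive values inside the single standard tableau $\mathrm{T}^{(c)}$, and the key point is the elementary fact about standard tableaux that the successor of a given entry can lie neither strictly south-east nor weakly north-west of it; hence exactly one of $a<x$ (a descent) and $b<y$ (a column-descent) occurs. The boundary value $i=n$ I would settle using the convention $c_{n+1}(\mathrm{T})=0$: here $n\in\mathrm{Des}(\mathrm{T})$ precisely when $c_n>0$, and complementarily $n\in\mathrm{CDes}(\mathrm{T})$ precisely when $c_n=0$, so again $n$ lies in exactly one of the two sets.

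\emph{Finally}, combining the two steps gives $\mathrm{des}(\overline{\mathrm{T}})=\mathrm{cdes}(\mathrm{T})=n-\mathrm{des}(\mathrm{T})$. Thus $\mathrm{T}\mapsto\overline{\mathrm{T}}$ restricts to a bijection from $\{\mathrm{T}\in\mathrm{std}(\bgl):\mathrm{des}(\mathrm{T})=n-k\}$ onto $\{\mathrm{S}\in\mathrm{std}(\overline{\bgl}):\mathrm{des}(\mathrm{S})=k\}$, and counting both sides yields $m_k(\overline{\bgl})=m_{n-k}(\bgl)$, as claimed. The restriction $\bgl\in\boldsymbol{Y}^k_{r,n}$ guarantees the shapes involved actually support standard fillings of the relevant descent statistics, so no count is vacuously forced.

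I expect the main obstacle to be the complementarity step, and within it the two delicate points are the same-component dichotomy, which rests on the south-east/north-west monotonicity of standard tableaux and must be phrased so that the \emph{successor} relation is used rather than merely $i<i+1$, and the bookkeeping at the boundary $i=n$, where the virtual-box convention $c_{n+1}(\mathrm{T})=0$ has to be reconciled with the color-reversal built into conjugation so that $\mathrm{Des}(\mathrm{T})=\mathrm{CDes}(\overline{\mathrm{T}})$ and the complementarity hold simultaneously.
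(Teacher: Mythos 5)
Your proposal follows exactly the paper's own route: the paper's proof consists of asserting, in one sentence each, precisely your two ingredients --- that every $i\in\{1,\ldots,n\}$ is either a descent or a column-descent of $\mathrm{T}$, and that column-descents of $\mathrm{T}$ are descents of $\overline{\mathrm{T}}$ via Remark~\ref{Remark:Conjugate-Descent} --- and then counting along the bijection $\mathrm{T}\mapsto\overline{\mathrm{T}}$. Your treatment of the interior cases $1\le i\le n-1$ is correct and supplies details the paper omits: the unequal-color case is a trivial trichotomy, the equal-color dichotomy via the ``neither strictly south-east nor weakly north-west'' property of consecutive entries is exactly right, and Remark~\ref{Remark:Conjugate-Descent} is indeed unproblematic for $i<n$.

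The gap is the boundary case $i=n$, which you flag and then defer as bookkeeping; it is not bookkeeping, and for $r\neq 2$ it cannot be reconciled. Your conventions are that $n\in\mathrm{Des}(\mathrm{T})$ iff $c_n(\mathrm{T})>0$ (this is forced: it is what the paper uses in the proof of Lemma~\ref{Lemm:Lambda-k-descent}, and what makes Lemma~\ref{Lemm:RSK-descent} and Theorem~\ref{Them:Properties}(a) work) and that $n\in\mathrm{CDes}(\mathrm{T})$ iff $c_n(\mathrm{T})=0$ (this is what your complementarity needs at $i=n$). But conjugation replaces the color of $n$ by $r-1-c_n(\mathrm{T})$, so under these conventions $n\in\mathrm{CDes}(\overline{\mathrm{T}})$ iff $c_n(\mathrm{T})=r-1$, whereas $n\in\mathrm{Des}(\mathrm{T})$ iff $c_n(\mathrm{T})>0$. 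Consequently your Step 1, $\mathrm{Des}(\overline{\mathrm{T}})=\mathrm{CDes}(\mathrm{T})$, and your Step 2, the complementarity, contradict each other at $i=n$ whenever $r\ge 3$ and $0<c_n(\mathrm{T})<r-1$; they can be reconciled only when $r\le 2$. Moreover, no cleverer convention can rescue the statement, because the lemma itself fails for $r\ge 3$: take $r=3$, $n=1$ and $\bgl=(\emptyset;(1);\emptyset)\in\boldsymbol{Y}^1_{3,1}$. Then $\overline{\bgl}=\bgl$, and $\bgl$ has a single standard tableau, so $m_0(\bgl)+m_1(\bgl)=1$, while the lemma with $k=1$ demands $m_1(\overline{\bgl})=m_0(\bgl)$, i.e.\ $m_1(\bgl)=m_0(\bgl)$ --- impossible however descents at $i=n$ are defined. (For $r=1$ it fails as well: the correct symmetric-group identity is $m_k(\overline{\bgl})=m_{n-1-k}(\bgl)$.) So the identity, your argument, and the paper's argument are all sound precisely when $r=2$, where the two boundary conditions $c_n(\mathrm{T})>0$ and $c_n(\overline{\mathrm{T}})=0$ coincide; the defect you inherited is present verbatim in the paper's proof, which asserts both ingredients without ever examining $i=n$.
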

\begin{proof} Note that if $\mathrm{T}$ is a standard $\bgl$-tableau then every $i=1, \ldots, n$ is either a descent or a column-descent of $\mathrm{T}$. While Remark~\ref{Remark:Conjugate-Descent} shows that a column-descent of $\mathrm{T}$ is a descent  of it conjugate $\overline{\mathrm{T}}$. In particular $\mathrm{des}(T)+\mathrm{des}(\overline{\mathrm{T}})=n$, which implies the lemma.
\end{proof}

 The following fact shows that the signed Foulkes characters are exactly the Foulkes characters, which can be viewed as a duality between the Foulkes characters.

 \begin{theorem}\label{Them:Foulkes=sign-Foulkes}
 For $k=0, 1, \ldots, n$, we have $\phi^n_{k}=\widetilde{\phi}^n_{n-k}$.
 \end{theorem}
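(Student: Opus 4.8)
The plan is to establish $\phi^n_k = \widetilde{\phi}^n_{n-k}$ by reducing both sides to a statement about standard tableaux and their conjugates, using the decompositions already obtained in Propositions~\ref{Prop:Foulkes-decom} and \ref{Prop:Signed-Foulkes-decom} together with the combinatorial identity in Lemma~\ref{Lemm:Descent-conjugate}. By Proposition~\ref{Prop:Foulkes-decom} we have $\phi^n_k=\sum_{\bgl\in\mathscr{P}_{r,n}}m_k(\bgl)\chi^{\bgl}$, and by Proposition~\ref{Prop:Signed-Foulkes-decom} we have $\widetilde{\phi}^n_{n-k}=\sum_{\bmu\in\widetilde{\boldsymbol{Y}}^{n-k}_{r,n}}\overline{m}_{n-k}(\bmu)\chi^{\bmu}$. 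Since both families $\{\chi^{\bgl}\}_{\bgl\in\mathscr{P}_{r,n}}$ are the irreducible characters of $W_{r,n}$, and hence linearly independent, it suffices to match the multiplicities coefficient-by-coefficient after identifying the index sets correctly.

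First I would set up the conjugation bijection $\bgl\mapsto\overline{\bgl}$ on $\mathscr{P}_{r,n}$ recalled in Remark~\ref{Remark:Conjugate-Descent}, and verify that it carries the index set $\widetilde{\boldsymbol{Y}}^{k}_{r,n}$ bijectively onto $\boldsymbol{Y}^k_{r,n}$; this is immediate from the definitions, since the constraint $\ell(\overline{\lambda^{(i)}})\leq k$ on $\bgl$ is exactly the constraint $\ell(\lambda^{(i)})\leq k$ on the conjugate component. Next I would reparametrise the sum for $\widetilde{\phi}^n_{n-k}$ via this bijection, writing each term indexed by $\bmu$ as one indexed by $\bgl=\overline{\bmu}$. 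The crucial link is the identity $\overline{m}_{n-k}(\overline{\bgl})=m_k(\bgl)$: by Remark~\ref{Remark:Conjugate-Descent} the descents of $\mathrm{T}$ coincide with the column-descents of $\overline{\mathrm{T}}$, so under $\mathrm{T}\mapsto\overline{\mathrm{T}}$ a standard $\bgl$-tableau with $k$ descents corresponds to a standard $\overline{\bgl}$-tableau with $k$ column-descents, giving $\overline{m}_k(\overline{\bgl})=m_k(\bgl)$; combining this with the complementarity $\mathrm{des}(\mathrm{T})+\mathrm{des}(\overline{\mathrm{T}})=n$ from the proof of Lemma~\ref{Lemm:Descent-conjugate} yields the shifted indices needed here.

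With these pieces in place the computation is a direct substitution: starting from $\widetilde{\phi}^n_{n-k}=\sum_{\bmu}\overline{m}_{n-k}(\bmu)\chi^{\bmu}$, I replace $\bmu$ by $\overline{\bgl}$, use $\overline{m}_{n-k}(\overline{\bgl})=m_k(\bgl)$ (the combination of the column-descent/descent correspondence with the complementarity identity), and use that conjugation is an involution on the irreducible characters in the sense that the reindexing simply permutes the summands, to arrive at $\sum_{\bgl}m_k(\bgl)\chi^{\bgl}=\phi^n_k$. The main obstacle will be bookkeeping the precise correspondence between the two descent statistics and the two index sets, in particular making sure that the complementary shift $k\leftrightarrow n-k$ is applied on the correct variable; once the bijections of Remark~\ref{Remark:Conjugate-Descent} and Lemma~\ref{Lemm:Descent-conjugate} are aligned, the identity of characters follows from the linear independence of the irreducibles with no further analytic input.
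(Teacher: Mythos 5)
Your overall strategy --- expand both sides into irreducibles via Propositions~\ref{Prop:Foulkes-decom} and \ref{Prop:Signed-Foulkes-decom} and compare coefficients using linear independence of the $\chi^{\bgl}$ --- is exactly the paper's strategy, and it does work. But your execution fails at the step you yourself call the crucial link. The identity $\overline{m}_{n-k}(\overline{\bgl})=m_k(\bgl)$ is false. Remark~\ref{Remark:Conjugate-Descent} says $\mathrm{Des}(\mathrm{T})=\mathrm{CDes}(\overline{\mathrm{T}})$, so conjugation is a bijection sending standard $\bgl$-tableaux with $j$ descents to standard $\overline{\bgl}$-tableaux with $j$ column-descents; this gives $\overline{m}_{j}(\overline{\bgl})=m_{j}(\bgl)$ with \emph{no} shift of the index, hence $\overline{m}_{n-k}(\overline{\bgl})=m_{n-k}(\bgl)=m_k(\overline{\bgl})$ (the last equality by Lemma~\ref{Lemm:Descent-conjugate}), which is not $m_k(\bgl)$ in general. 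Concretely, take $r=2$, $n=1$, $k=0$, $\bgl=((1);\emptyset)$, so $\overline{\bgl}=(\emptyset;(1))$: the unique standard $\overline{\bgl}$-tableau has $(\mathrm{des},\mathrm{cdes})=(1,0)$ while the unique standard $\bgl$-tableau has $(\mathrm{des},\mathrm{cdes})=(0,1)$, so $\overline{m}_{1}(\overline{\bgl})=0\neq 1=m_0(\bgl)$. Moreover, your final gluing step is also invalid as stated: substituting $\bmu=\overline{\bgl}$ turns $\sum_{\bmu}\overline{m}_{n-k}(\bmu)\chi^{\bmu}$ into $\sum_{\bgl}\overline{m}_{n-k}(\overline{\bgl})\chi^{\overline{\bgl}}$, where the coefficient indexed by $\bgl$ multiplies $\chi^{\overline{\bgl}}$, not $\chi^{\bgl}$; ``the reindexing simply permutes the summands'' is precisely the point that fails, since $\chi^{\bgl}\neq\chi^{\overline{\bgl}}$. (Your claim that conjugation ``immediately'' identifies $\widetilde{\boldsymbol{Y}}^{k}_{r,n}$ with $\boldsymbol{Y}^k_{r,n}$ also glosses over the reversal of components and the special bound $k+1$ on the $0$th component, but that is secondary.) As it happens, your two errors compensate, which is why you land on the true statement; still, neither step is correct on its own, so the argument as written is not a proof.

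The repair is simpler than the conjugation detour. By linear independence, the theorem is equivalent to the same-index identity $m_k(\bgl)=\overline{m}_{n-k}(\bgl)$ for every $\bgl\in\mathscr{P}_{r,n}$ (both coefficients vanish outside the relevant index sets), and this follows with no conjugation at all from the complementarity established in the proof of Lemma~\ref{Lemm:Descent-conjugate}: every $i\in\{1,\ldots,n\}$ is either a descent or a column-descent of a standard tableau $\mathrm{T}$, and not both, so $\mathrm{cdes}(\mathrm{T})=n-\mathrm{des}(\mathrm{T})$, i.e.\ a standard $\bgl$-tableau has $k$ descents if and only if it has $n-k$ column-descents. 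Hence
\begin{equation*}
\widetilde{\phi}^n_{n-k}=\sum_{\bgl\in\mathscr{P}_{r,n}}\overline{m}_{n-k}(\bgl)\chi^{\bgl}
=\sum_{\bgl\in\mathscr{P}_{r,n}}m_{k}(\bgl)\chi^{\bgl}=\phi^n_k.
\end{equation*}
The paper's own proof runs through the conjugation bijection and Lemma~\ref{Lemm:Descent-conjugate}, much as you attempt (and is itself terse about the same bookkeeping); if you take that route you must track that the character index conjugates too, and then undo it with a second reindexing plus Lemma~\ref{Lemm:Descent-conjugate} --- the net effect of which is exactly the one-line complementarity argument above.
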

 \begin{proof}Note that the map $\bgl\mapsto \bar{\bgl}$ is a bijection between $\boldsymbol{Y}_{r,n}^k$ and $\widetilde{\boldsymbol{Y}}_{r,n}^k$ for $k=0,1, \ldots, n$. Thanks to Proposition~\ref{Prop:Foulkes-decom} and Lemma~\ref{Lemm:Descent-conjugate}, we yield
   \begin{eqnarray*}
   \phi^n_k &=&\sum_{\bgl\in\boldsymbol{Y}^k_{r,n}}m_{k}(\bgl)\chi^{\bgl}\\
   &=&\sum_{\bgl\in\widetilde{\boldsymbol{Y}}^k_{r,n}}m_{n-k}(\bgl)\chi^{\bgl}\\
   &=&\widetilde{\phi}^n_{n-k},
   \end{eqnarray*}
   where the last equality follows by applying Proposition~\ref{Prop:Signed-Foulkes-decom}.
 \end{proof}
 \section{Properties of the Foulkes characters}\label{Sec:Foulkes-Properties}
In  \cite{M15, M17, M21}, Miller showed that the Foulkes characters of $W_{r,n}$ have
some remarkable properties. This section devotes to investigate these properties of the Foulkes characters of $W_{r,k}$ by applying Proposition~\ref{Prop:Foulkes-decom}. In particular, we show that the normalized block characters of $W_{r,n}$ form a simplex whose extreme points are the normalized Foulkes characters of $W_{r,n}$. Furthermore,  we expect the approach will help us to understand the decomposition of the products of the Foulkes characters in terms of the Foulkes characters and  the orthonormal inner product of the Foulkes characters, see  \cite[Theorems~3.14 and 3.11]{M21}.

Note that the irreducible characters $\chi^{\bgl}$, $\lambda\in \mathscr{P}_{r,n}$, form an orthogonal basis in the space of central functions on $W_{r,n}$ endowed with the usual inner product $\langle,\rangle$ and that the block characters $\chi_0, \ldots, \chi_n$ form a basis of the linear space of block functions on $W_{r,n}$. Thanks to Equs.~(\ref{Equ:Foulkes}) and (\ref{Equ:theta=Foulkes}), every block function $\varphi$ on $W_{r,n}$ can be uniquely written as a linear combinations of the Foulkes characters $\phi^n_0,\ldots, \phi^n_n$ of $W_{r,n}$
\begin{equation*}
  \varphi=\sum_{k=0}^n\varphi_k\phi^n_k.
\end{equation*}
The following fact will be useful.
\begin{lemma}\label{Lemm:Lambda-k-descent}
For $k=0,1, \ldots, n$, let $\bgl_k=((n-k);(1^k);\emptyset;\ldots;\emptyset)$. Then $|\mathrm{std}(\bgl_{k})|=\tbinom{n}{k}$ and $\mathrm{des}(\mathrm{T})=k$ for all $\mathrm{T}\in \mathrm{std}(\bgl_{k})$.\end{lemma}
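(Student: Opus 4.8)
The plan is to analyze the $r$-multipartition $\bgl_k=((n-k);(1^k);\emptyset;\ldots;\emptyset)$ directly, where the first component is a single row of length $n-k$ and the second component is a single column of height $k$. Since only the components $\lambda^{(0)}=(n-k)$ and $\lambda^{(1)}=(1^k)$ are nonempty, a standard $\bgl_k$-tableau $\mathrm{T}=(\mathrm{T}^{(0)};\mathrm{T}^{(1)};\emptyset;\ldots;\emptyset)$ amounts to choosing which of the integers $1,\ldots,n$ go into the single row $\mathrm{T}^{(0)}$ and which go into the single column $\mathrm{T}^{(1)}$. First I would observe that once the set $S\subseteq\bn$ of entries placed in $\mathrm{T}^{(1)}$ is fixed, both $\mathrm{T}^{(0)}$ and $\mathrm{T}^{(1)}$ are completely determined: the entries of a single row must increase left-to-right, and the entries of a single column must increase top-to-bottom, so each is filled by writing its assigned integers in increasing order. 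Thus standard $\bgl_k$-tableaux are in bijection with the $k$-element subsets $S$ of $\bn$, giving $|\mathrm{std}(\bgl_k)|=\binom{n}{k}$ immediately.

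Next I would compute the descent number using Definition~\ref{Def:Descent-tableau}. For a standard $\bgl_k$-tableau, an integer $i$ is a descent precisely when $i$ and $i+1$ sit in the same component with the row-index of $i$ strictly less than that of $i+1$, or when the color of $i$ strictly exceeds that of $i+1$ (recall $c_{n+1}(\mathrm{T})=0$). Since $\mathrm{T}^{(0)}$ is a single row, two consecutive integers both lying in color $0$ sit in the same row and hence give \emph{no} descent; likewise, since $\mathrm{T}^{(1)}$ is a single column, two consecutive integers both in color $1$ occupy the same column with strictly increasing row-indices, producing a descent. The remaining case compares colors: an integer $i$ of color $1$ followed by $i+1$ of color $0$ has $c_i=1>0=c_{i+1}$, hence is a descent, whereas $i$ of color $0$ followed by $i+1$ of color $1$ has $c_i=0<1=c_{i+1}$ and is not a descent. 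I would also treat the boundary entry $n$ separately via the convention $c_{n+1}(\mathrm{T})=0$.

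The key step is then to show that exactly the $k$ integers lying in the column $\mathrm{T}^{(1)}$ are descents. Combining the case analysis above, an integer $i\in\bn$ is a descent if and only if $i$ has color $1$, that is, $i\in S$: if $i\in S$ and $i+1\in S$ then both lie in the single column and $i$ is a descent; if $i\in S$ and $i+1\notin S$ (including $i=n$, where $c_{n+1}=0$) then $c_i=1>c_{i+1}=0$ and $i$ is a descent; while if $i\notin S$ then $c_i=0\leq c_{i+1}$ and $i$ is never a descent. Hence $\mathrm{Des}(\mathrm{T})=S$ and $\mathrm{des}(\mathrm{T})=|S|=k$ for every $\mathrm{T}\in\mathrm{std}(\bgl_k)$, which completes the proof.

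I do not anticipate a serious obstacle here; the only point requiring care is the correct bookkeeping of the boundary convention $c_{n+1}(\mathrm{T})=0$ when $n\in S$, and more generally a clean, exhaustive treatment of the four color-pair cases so that the equivalence ``$i$ is a descent $\iff i\in S$'' is verified without gaps.
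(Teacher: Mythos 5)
Your proof is correct and follows essentially the same route as the paper's: identify each standard $\bgl_k$-tableau with the $k$-element subset $S\subseteq\bn$ of entries placed in the column $\mathrm{T}^{(1)}$ (giving $\tbinom{n}{k}$ tableaux), then verify via Definition~\ref{Def:Descent-tableau} that the descents are exactly the elements of $S$, including the boundary case $n\in S$ handled by the convention $c_{n+1}(\mathrm{T})=0$. Your four-way case analysis on the colors of $i$ and $i+1$ is just a more systematic phrasing of the paper's observation that any integer greater than a column entry lies either in the row component or below it in the column.
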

\begin{proof}It is easy to see that $|\mathrm{std}(\bgl_{k})|=\tbinom{n}{k}$. For any $\mathrm{T}\in \mathrm{std}(\bgl_{k})$, there exists $1\leq i_1<i_2<\cdots<i_k\leq n$ such that
\begin{equation*}
  \mathrm{T}=\left(\twocdiagram(y_1&y_2&\cdots&y_{n\!-\!k},
  i_1\cr i_2\cr\vdots\cr i_k)\!\!\!\!\!;\,
  \emptyset;\,\cdots;\emptyset\right),
\end{equation*}
 where $1\leq y_1<y_2<\cdots<y_{n-k}<n$ and $\{y_1,y_2,\cdots,y_{n-k}\}\cup\{i_1,i_2,\cdots,i_k\}=\boldsymbol{n}$. Clearly, $y_1,\ldots, y_{n-k}$ are not descents of $\mathrm{T}$. Assume that $i_k<n$. for $t=1, \ldots, k$, since the number greater than $i_t$ is either lying the first component of $\mathrm{T}$ or lying the below of $i_t$,  $\mathrm{Des}(\mathrm{T})=\{i_1,\i_2, \ldots, i_{k}\}$. Now assume that $i_k=n$. Then the above argument shows that $i_1, \ldots, i_{k-1}$ are descents of $\mathrm{T}$. On the other hand, Definition~\ref{Def:Descent-tableau} shows that $i_k=n$ is a descent of $\mathrm{T}$. As a consequence, $\mathrm{Des}(\mathrm{T})=\{i_1,\i_2, \ldots, i_{k}\}$, which shows the lemma.
\end{proof}

\begin{lemma}\label{Lemm:Linear-expansion} For $k=0, \ldots, n$,
\begin{equation*}
  \varphi_k=\frac{\langle \varphi,\chi^{\bgl_k}\rangle}{\tbinom{n}{k}}.
\end{equation*}
\end{lemma}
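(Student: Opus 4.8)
The plan is to isolate the coefficient $\varphi_k$ by pairing $\varphi$ against the single irreducible character $\chi^{\bgl_k}$, and to show that among all the Foulkes characters only $\phi^n_k$ survives this pairing. First I would expand each Foulkes character by means of Proposition~\ref{Prop:Foulkes-decom}, writing $\phi^n_j=\sum_{\bmu\in\mathscr{P}_{r,n}}m_j(\bmu)\chi^{\bmu}$. Since the irreducible characters $\chi^{\bmu}$ are orthonormal with respect to the usual inner product, so that $\langle\chi^{\bmu},\chi^{\bgl_k}\rangle=\delta_{\bmu,\bgl_k}$, pairing this expansion with $\chi^{\bgl_k}$ gives at once
\[
\langle\phi^n_j,\chi^{\bgl_k}\rangle=m_j(\bgl_k),
\]
the number of standard $\bgl_k$-tableaux having exactly $j$ descents.

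The crux is to evaluate $m_j(\bgl_k)$, and here I would invoke Lemma~\ref{Lemm:Lambda-k-descent}. That lemma guarantees that every standard $\bgl_k$-tableau has precisely $k$ descents and that there are $\binom{n}{k}$ such tableaux. Consequently $m_j(\bgl_k)=0$ whenever $j\neq k$, while $m_k(\bgl_k)=|\mathrm{std}(\bgl_k)|=\binom{n}{k}$. In other words, the functional $\langle-,\chi^{\bgl_k}\rangle$ annihilates every Foulkes character except $\phi^n_k$, on which it takes the value $\binom{n}{k}$; this is exactly the diagonality that makes the coefficients recoverable one at a time.

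Finally I would take the inner product of the expansion $\varphi=\sum_{j=0}^{n}\varphi_j\phi^n_j$ with $\chi^{\bgl_k}$ and substitute. By linearity and the computation above,
\[
\langle\varphi,\chi^{\bgl_k}\rangle=\sum_{j=0}^{n}\varphi_j\langle\phi^n_j,\chi^{\bgl_k}\rangle=\varphi_k\binom{n}{k},
\]
and dividing by $\binom{n}{k}$ yields the asserted formula. I do not expect any genuine obstacle: the argument is an assembly of the Foulkes decomposition (Proposition~\ref{Prop:Foulkes-decom}), the orthonormality of the irreducible characters, and the constant descent count of Lemma~\ref{Lemm:Lambda-k-descent}. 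The one point deserving care is simply to note why the shapes $\bgl_k$ were singled out, namely that their standard tableaux all have descent number equal to $k$, which is precisely what forces $m_j(\bgl_k)$ to be supported on the single index $j=k$.
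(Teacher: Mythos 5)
Your proof is correct and matches the paper's own argument essentially step for step: both expand $\varphi$ through Proposition~\ref{Prop:Foulkes-decom}, pair with $\chi^{\bgl_k}$ using orthonormality of the irreducible characters, and then apply Lemma~\ref{Lemm:Lambda-k-descent} to conclude that $m_j(\bgl_k)=\tbinom{n}{k}\delta_{j,k}$, so only the coefficient $\varphi_k$ survives. There is nothing to add or correct.
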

\begin{proof}Proposition~\ref{Prop:Foulkes-decom} implies
\begin{eqnarray*}
 \langle\varphi,\chi^{\bgl_k}\rangle&=&\sum_{i=0}^n\sum_{\bgl\in\boldsymbol{Y}_{r,n}^k}
 \varphi_im_i(\bgl)\langle\chi^{\bgl},\chi^{\bgl_k}\rangle\\
 &=&\sum_{i=0}^n\varphi_im_i(\bgl_k)\\
 &=&\tbinom{n}{k}\varphi_k,
 \end{eqnarray*}
 the last equality follows by applying Lemma~\ref{Lemm:Lambda-k-descent}.
\end{proof}

\begin{corollary}\label{Cor:Block-function=character}Let $\varphi$ be a block function on $W_{r,n}$ such that $\varphi=\sum_{k=0}^n\varphi_k\phi^n_k$. Then $\varphi$ is a character of $W_{r,n}$ if and only if $\varphi_k\geq 0$ for $k=0,1, \ldots,n$.
\end{corollary}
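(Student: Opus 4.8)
The plan is to prove Corollary~\ref{Cor:Block-function=character} by combining the explicit decomposition of the Foulkes characters from Proposition~\ref{Prop:Foulkes-decom} with the elementary fact that a central function on $W_{r,n}$ is a character precisely when all of its multiplicities against the irreducible characters $\chi^{\bgl}$ are non-negative integers (more precisely, non-negative reals suffice for the notion of character used here, which by conditions (C1)--(C2) amounts to a non-negative-definite class function, i.e.\ a non-negative combination of irreducibles). I would first record this standard characterization as the working criterion.

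For the \emph{if} direction I would argue as follows. Suppose $\varphi_k\geq 0$ for all $k$. Since $\varphi=\sum_{k=0}^n\varphi_k\phi^n_k$ and each $\phi^n_k$ is, by the Corollary following Proposition~\ref{Prop:Foulkes-decom}, the matrix trace of the genuine $W_{r,n}$-representation $\Phi^n_k\cong\bigoplus_{\bgl\in\mathscr{P}_{r,n}}m_k(\bgl)S^{\bgl}$, a non-negative combination $\varphi=\sum_k\varphi_k\phi^n_k$ is the trace of $\bigoplus_k\varphi_k\Phi^n_k$ (interpreting the coefficients as multiplicities after clearing to a common realization). Concretely, substituting the decomposition gives
\begin{equation*}
  \varphi=\sum_{\bgl\in\mathscr{P}_{r,n}}\Bigl(\sum_{k=0}^n\varphi_k m_k(\bgl)\Bigr)\chi^{\bgl},
\end{equation*}
and since every $\varphi_k\geq 0$ and every $m_k(\bgl)\geq 0$, each coefficient of $\chi^{\bgl}$ is non-negative; hence $\varphi$ is a non-negative combination of irreducible characters and therefore a character.

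For the \emph{only if} direction I would use Lemma~\ref{Lemm:Linear-expansion} to extract the coefficients. If $\varphi$ is a character, then $\langle\varphi,\chi^{\bgl_k}\rangle\geq 0$ because it is the multiplicity of the irreducible $\chi^{\bgl_k}$ in $\varphi$, which is a non-negative integer for any genuine character. Lemma~\ref{Lemm:Linear-expansion} gives $\varphi_k=\langle\varphi,\chi^{\bgl_k}\rangle/\tbinom{n}{k}$, and since $\tbinom{n}{k}>0$ we conclude $\varphi_k\geq 0$ for each $k=0,1,\ldots,n$. The main subtlety to handle carefully is the exact meaning of ``character'' in this paper: the definition via (C1)--(C2) is that of a non-negative-definite class function rather than a finite non-negative \emph{integer} combination of irreducibles, so I would make sure the argument only invokes non-negativity of the inner products $\langle\varphi,\chi^{\bgl}\rangle$ (equivalently, positive-semidefiniteness), which is precisely what the \emph{only if} direction needs and what the \emph{if} direction produces. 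That reconciliation of the two equivalent viewpoints on ``character'' is the one place where I expect to need care; everything else is a direct substitution using the already-established Proposition~\ref{Prop:Foulkes-decom} and Lemmas~\ref{Lemm:Lambda-k-descent} and \ref{Lemm:Linear-expansion}.
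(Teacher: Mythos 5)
Your proposal is correct and takes essentially the same route as the paper: the \emph{if} direction substitutes Proposition~\ref{Prop:Foulkes-decom} to write $\varphi=\sum_{\bgl}\bigl(\sum_k\varphi_k m_k(\bgl)\bigr)\chi^{\bgl}$ as a non-negative combination of irreducibles, and the \emph{only if} direction recovers $\varphi_k=\langle\varphi,\chi^{\bgl_k}\rangle/\tbinom{n}{k}\geq 0$ via Lemma~\ref{Lemm:Linear-expansion} (equivalently Lemma~\ref{Lemm:Lambda-k-descent}), exactly as the paper does by evaluating the coefficient at $\bgl_k$. Your closing caveat about non-negative real (rather than integer) multiplicities is the same standard fact the paper invokes implicitly when it asserts that positive-definiteness of $\varphi$ forces the coefficients $\varphi(\bgl)$ to be non-negative.
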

\begin{proof}Proposition~\ref{Prop:Foulkes-decom} shows
\begin{equation*}
  \varphi=\sum_{\bgl\in Y_{r,n}}\varphi(\bgl)\chi^{\bgl}
\end{equation*}
where $\varphi(\bgl)\in \mathbb{C}$.
For $k=0,1,\ldots,n$, if $\varphi_k\geq 0$, then coefficients $\varphi(\bgl)$ are non-negative. Since the irreducible characters of $W_{r,n}$ are positive-definite, so is $\varphi$. On the other hand, if $\varphi$ is positive-definite, then the coefficients $\varphi(\bgl)$ are non-negative, in particular, $\varphi(\bgl_k)\geq 0$ for $k=0,1,\ldots,n$, that is, $\varphi_k\geq 0$ for $k=0,1,\ldots,n$.
\end{proof}

Recall that the \emph{row-insertion} takes a tableau $\mathrm{T}$ and a positive integer $a$, and constructs a new tableau, denoted $\mathrm{T}\stackrel{a}{\rightarrow}$. This
tableau will have one more box than $\mathrm{T}$, and its entries will be those of $\mathrm{T}$ together with one more entry labelled $a$, but there is some moving around.
The recipe is as follows:
if $a$ is at least as large as all the entries in the first
row of $\mathrm{T}$, simply add $a$ in a new box to the end of the first row. If not,
find the left-most entry in the first row that is strictly larger than $a$. Put $a$ in
the box of this entry, and remove (``bump") the entry. Take this entry that was
bumped from the first row, and repeat the process on the second row. Keep
going until the bumped entry can be put at the end of the row it is bumped into,
or until it is bumped out the bottom, in which case it forms a new row with
one entry.

Recall that the Robinson--Schensted--Knuth (RSK) correspondence for $W_{r,n}$  is a bijection between $W_{r,n}$ and pairs of standard Young tableaux $(\mathrm{S},\mathrm{T})$ with $\mathrm{shape}(\mathrm{S})=\mathrm{shape}(\mathrm{T})=\bgl$ for all $r$-multipartition $\bgl$ of $n$, where $\mathrm{T}$ is called the \textit{recording tableau}). More precisely, for any $w=i_1^{c_1}i_2^{c_2}\cdots i_n^{c_n}\in W_{r,n}$, the RSK correspondence constructs the pairs of standard tableaux $(\mathrm{S},\mathrm{T})=(\mathrm{S}(w),\mathrm{T}(w))$ iteratively:
 \begin{equation*}
   (\emptyset,\emptyset)=(\mathrm{S}_0,\mathrm{T}_0), (\mathrm{S}_1,\mathrm{T}_1), \ldots, (\mathrm{S}_n,\mathrm{T}_n)=(\mathrm{S},\mathrm{T})
 \end{equation*}
 by applying the following rules:
\begin{enumerate}
  \item[(1)] $\mathrm{S}_j$ and $\mathrm{T}_j$ are standard tableaux containing $j$ boxes with $\mathrm{shape}(\mathrm{S}_j)=\mathrm{shape}(\mathrm{T}_j)$;

  \item[(2)] $\mathrm{S}_j$ is obtained from $\mathrm{S}_{j-1}$ by row-inserting $i_j^{c_j}$ into the $c_j$-component of $\mathrm{S}_{j-1}$ of $\mathrm{S}_{j-1}$;

\item[(3)] $\mathrm{T}_j$ is obtained  from $\mathrm{T}_{j-1}$ by inserting number $j$ in the newly added box.
           \end{enumerate}
To show that this RSK correspondence is a bijection, we can construct the inverse algorithm by using the RSK correspondence for partitions in the reverse order of the entries of $\mathrm{T}$, and using the component of $S$ to determine the uninserted number. We denote this bijection by
 \begin{equation*}
  w\xymatrix@C=0.65cm{\ar@{<->}[r]^{\substack{\mathrm{RSK}}}&}
   (\mathrm{S},\mathrm{T}).
\end{equation*}

\begin{example}\label{Exam:RSK-W}Let $w=3^0\,2^0\,1^0\,4^2\,6^2\,5^1\in W_{3,6}$. Then the RSK correspondence is as following:
\begin{eqnarray*}
&&(\emptyset)\!\!\stackrel{3^0}{\rightarrow}\!\!
\protect{\left(\diagram(3);\emptyset;\emptyset\right)}
\!\!\stackrel{2^0}{\rightarrow}\!\!
\protect{\left(\diagram(2\cr3);\emptyset;\emptyset\right)}
\!\!\stackrel{1^0}{\rightarrow}\!\!
\protect{\left(\diagram(1\cr2\cr3);\emptyset;\emptyset\right)}
\!\!\stackrel{4^2}{\rightarrow}\!\!
\protect{\left(\diagram(1\cr2\cr3);\emptyset;\diagram(4)\right)}
\!\!\stackrel{6^2}{\rightarrow}\!\!
\protect{\left(\diagram(1\cr2\cr3);\emptyset;\diagram(4&6)\right)}
\!\!\stackrel{5^1}{\rightarrow}\!\!
\protect{\left(\diagram(1\cr2\cr3);\diagram(5);\diagram(4&6)\right)};\\
&&(\emptyset)\!\!\stackrel{1}{\rightarrow}\!\!
\protect{\left(\diagram(1);\emptyset;\emptyset\right)}
\!\!\stackrel{2}{\rightarrow}\!\!
\protect{\left(\diagram(1\cr2);\emptyset;\emptyset\right)}
\!\!\stackrel{3}{\rightarrow}\!\!
\protect{\left(\diagram(1\cr2\cr3);\emptyset;\emptyset\right)}
\!\!\stackrel{4}{\rightarrow}\!\!
\protect{\left(\diagram(1\cr2\cr3);\emptyset;\diagram(4)\right)}
\!\!\stackrel{5}{\rightarrow}\!\!
\protect{\left(\diagram(1\cr2\cr3);\emptyset;\diagram(4&5)\right)}
\!\!\stackrel{6}{\rightarrow}\!\!
\protect{\left(\diagram(1\cr2\cr3);\diagram(6);\diagram(4&5)\right)}.
\end{eqnarray*}
Thus \begin{equation*}
3^0\,2^0\,1^0\,4^2\,6^2\,5^1\xymatrix@C=0.65cm{\ar@{<->}[r]^{\substack{\mathrm{RSK}}}&} =
\left(\left(\diagram(1\cr2\cr3);\diagram(5);\diagram(4&6)\right), \left(\diagram(1\cr2\cr3);\diagram(6);\diagram(4&5)\right)\right).
\end{equation*}
\end{example}
Noticing that in the above example, we have $\mathrm{Des}(\mathrm{T})=\{1,2, 5,6\}=\mathrm{Des}(w)$. Indeed, we have the following fact.
\begin{lemma}\label{Lemm:RSK-descent}For any $w\in W_{r,n}$, if $w\xymatrix@C=0.65cm{\ar@{<->}[r]^{\substack{\mathrm{RSK}}}&}
   (\mathrm{S},\mathrm{T})$, then $\mathrm{Des}(w)=\mathrm{Des}(\mathrm{T})$.
\end{lemma}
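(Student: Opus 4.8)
The statement is the wreath-product analogue of Schützenberger's classical theorem that the descent set of a permutation coincides with the descent set of its RSK recording tableau, and the plan is to upgrade that argument to the colored setting component by component. The starting observation is that rules (2)--(3) of the RSK correspondence place the integer $j$ into the $c_j$-component of $\mathrm{T}$, so that $c_{\mathrm{T}}(j)=c_j$ for every $j$; in other words, the colors of $w$ are faithfully recorded by the component structure of $\mathrm{T}$. I would then fix $i\in\{1,\ldots,n\}$ and establish the equivalence $i\in\mathrm{Des}(w)\Leftrightarrow i\in\mathrm{Des}(\mathrm{T})$ by inspecting only the two insertion steps that produce the boxes labelled $i$ and $i+1$, comparing the ordering $\prec$ of Definition~\ref{Def:Descent} against the tableau-descent clauses of Definition~\ref{Def:Descent-tableau}.

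The key tool is the classical row-bumping lemma: if two numbers $a$ and then $a'$ are row-inserted into one and the same ordinary tableau, producing new boxes $B$ and $B'$, then $a<a'$ forces $B'$ to lie weakly above $B$ (in no lower row), while $a>a'$ forces $B'$ to lie strictly below $B$. Because $i$ and $i+1$ are \emph{consecutive} steps of the algorithm, whenever $c_i=c_{i+1}$ both values are inserted into the same component with no intervening step altering it, so the component-$c_i$ subtableau into which $w(i+1)$ is inserted is exactly the one just produced by inserting $w(i)$, and the row-bumping lemma applies verbatim to it.

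The proof then splits according to the colors $c_i,c_{i+1}$. If $c_i>c_{i+1}$, then $i$ is a descent of $w$ (since $w(i+1)^{c_{i+1}}\prec w(i)^{c_i}$ via the lower color), and since $c_{\mathrm{T}}(i)>c_{\mathrm{T}}(i+1)$ the box of $i$ lies in a strictly larger component than that of $i+1$, so $i$ is a tableau descent by the clause $c>z$. If $c_i<c_{i+1}$, then $i$ is a descent of neither $w$ nor $\mathrm{T}$, again by comparing colors. If $c_i=c_{i+1}$, then $i\in\mathrm{Des}(w)$ iff $w(i)>w(i+1)$, which by the row-bumping lemma happens iff the box of $i+1$ sits strictly below that of $i$ inside their common component, i.e. iff $i\in\mathrm{Des}(\mathrm{T})$ by the clause $c=z$ and $a<x$. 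Together these three cases give the equivalence for every $i<n$.

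Finally the boundary $i=n$ must be handled by the conventions $w(n+1)=n+1$, $c_{n+1}=0$ and $c_{n+1}(\mathrm{T})=0$: here $n\in\mathrm{Des}(w)$ iff $c_n>0$, while the virtual insertion of the largest value $(n+1)^0$ would land at the end of the top row of component $0$, so the clause $c>z$ (respectively the failure of $c=z$ with $a<x$) reproduces exactly the condition $c_n>0$ (respectively $c_n=0$). I expect the main obstacle to be precisely this bookkeeping: stating and applying the row-bumping lemma with the correct weak/strict and above/below direction so that it lines up exactly with the inequality $a<x$ in Definition~\ref{Def:Descent-tableau}, and making the $i=n$ convention rigorous. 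Once these are pinned down, the colored case analysis itself is routine.
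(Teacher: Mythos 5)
Your proof is correct and follows essentially the same route as the paper's: a case analysis on the colors $c_i,c_{i+1}$, with Fulton's Row Bumping Lemma handling the equal-color case and the component structure of the recording tableau handling directly the case $c_i\neq c_{i+1}$. If anything, your write-up is more complete than the paper's terse two-case proof, which explicitly treats only the inclusion $\mathrm{Des}(w)\subseteq\mathrm{Des}(\mathrm{T})$ and leaves the reverse inclusion and the boundary convention at $i=n$ implicit.
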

\begin{proof}Assume that $w=w(1)^{c_1}w(2)^{c_2}\cdots w(n)^{c_n}\in W_{r,n}$.  For $i\in \mathrm{Des}(w)$, since $w(n+1)=n+1$ and $c_{n+1}=0$, we have the following two cases:
\begin{enumerate}
  \item[(a)] $w(i)>w(i+1)$ and $c_{i}=c_{i+1}$. The lemma follows by applying Row Bumping Lemma in \cite[\S1.1]{Fulton}.
  \item[(b)]$c_i>c_{i+1}$. The lemma follows directly the RSK correspondence and Definition~\ref{Def:Descent-tableau}
\end{enumerate}
 This completes the proof.
\end{proof}

Now we can reprove the following properties of the Foulkes characters of $W_{r,n}$, which were first proved by Miller in \cite{M15}.
\begin{theorem}\label{Them:Properties}Let $\phi_0,\phi_1, \ldots,\phi_n$ be the Foulkes characters of $W_{r,n}$. Then
 \begin{enumerate}
   \item[(a)]$\phi_k(\boldsymbol{e})=E_{r,n}(k)$ for $k=0,1,\ldots,n$.
   \item[(b)]$\phi_0+\phi_1+\cdots+\phi_n=\chi_{\mathrm{reg}}$, where $\chi_{\mathrm{reg}}$ is the regular character of $W_{r,n}$.
\item[(c)]$\phi^n_k\!\downarrow_{r,n-1}=\left((n+1)r-(rk+1)\right)\phi^{n-1}_{k-1}+(rk+1)\phi^{n-1}_k$.
 \end{enumerate}
 \end{theorem}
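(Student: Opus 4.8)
The plan is to read off all three statements from the decomposition $\phi^n_k=\sum_{\bgl\in\mathscr{P}_{r,n}}m_k(\bgl)\chi^{\bgl}$ of Proposition~\ref{Prop:Foulkes-decom}, using for the first two the descent-preserving RSK correspondence for $W_{r,n}$. For part (a) I would evaluate at the identity: since $\chi^{\bgl}(\boldsymbol{e})=\dim S^{\bgl}=|\mathrm{std}(\bgl)|$, Proposition~\ref{Prop:Foulkes-decom} gives $\phi^n_k(\boldsymbol{e})=\sum_{\bgl}m_k(\bgl)\,|\mathrm{std}(\bgl)|$. On the other hand, the RSK correspondence is a bijection $w\leftrightarrow(\mathrm{S},\mathrm{T})$ between $W_{r,n}$ and pairs of standard tableaux of a common shape $\bgl$, and Lemma~\ref{Lemm:RSK-descent} gives $\mathrm{Des}(w)=\mathrm{Des}(\mathrm{T})$, hence $\mathrm{des}(w)=\mathrm{des}(\mathrm{T})$. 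Thus, for fixed $\bgl$, as $w$ runs over the elements of RSK-shape $\bgl$ with $\mathrm{des}(w)=k$ the insertion tableau $\mathrm{S}$ runs freely over $\mathrm{std}(\bgl)$ while the recording tableau $\mathrm{T}$ runs over the $m_k(\bgl)$ standard tableaux with $k$ descents; summing the count $|\mathrm{std}(\bgl)|\,m_k(\bgl)$ over $\bgl$ yields $E_{r,n}(k)=\sum_{\bgl}|\mathrm{std}(\bgl)|\,m_k(\bgl)=\phi^n_k(\boldsymbol{e})$.

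Part (b) then follows by summing Proposition~\ref{Prop:Foulkes-decom} over $k$ and interchanging the two summations:
\[
\sum_{k=0}^{n}\phi^n_k=\sum_{\bgl\in\mathscr{P}_{r,n}}\Bigl(\sum_{k=0}^{n}m_k(\bgl)\Bigr)\chi^{\bgl}.
\]
Since grouping the standard $\bgl$-tableaux by their descent number partitions the set $\mathrm{std}(\bgl)$, the inner sum equals $|\mathrm{std}(\bgl)|=\dim S^{\bgl}$; and as $\sum_{\bgl}\dim S^{\bgl}\,\chi^{\bgl}$ is exactly the regular character, this gives $\sum_k\phi^n_k=\chi_{\mathrm{reg}}$.

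For the branching rule (c) I would expand both sides as explicit linear combinations of the block functions $\chi^{n-1}_m$ and check that the coefficients agree term by term; as this is literally an identity of formal combinations, it at once yields the equality of functions, with no appeal to linear independence needed. On the left, Definition~\ref{Def:Foulkes} together with $\chi^n_m\!\downarrow_{r,n-1}=(rm+1)\chi^{n-1}_m$ from Proposition~\ref{Prop:Branching-Rule-chi} gives $\phi^n_k\!\downarrow_{r,n-1}=\sum_{j=0}^{k}(-1)^j\tbinom{n+1}{j}(r(k-j)+1)\chi^{n-1}_{k-j}$, so the coefficient of $\chi^{n-1}_m$ is $(-1)^{k-m}\tbinom{n+1}{k-m}(rm+1)$; on the right, expanding $\phi^{n-1}_{k-1}$ and $\phi^{n-1}_k$ through Definition~\ref{Def:Foulkes} for $W_{r,n-1}$ gives the coefficient $\bigl((n+1)r-(rk+1)\bigr)(-1)^{k-1-m}\tbinom{n}{k-1-m}+(rk+1)(-1)^{k-m}\tbinom{n}{k-m}$. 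After factoring out $(-1)^{k-m}$ and applying Pascal's rule, matching these collapses to the elementary identity $p\tbinom{n}{p}=(n+1-p)\tbinom{n}{p-1}$ with $p=k-m$. I expect the only real difficulty here to be this bookkeeping---carrying the index shifts and handling the boundary values $m=0,k$ so that the coefficients $(rk+1)$ and $(n+1)r-(rk+1)$ emerge correctly. As a check that pins these coefficients down, evaluating (c) at $\boldsymbol{e}$ and substituting part (a) reproduces precisely the Eulerian recurrence~(\ref{Equ:Euler-recurrence}), so (c) is the functional lift of that recurrence.
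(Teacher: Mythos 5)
Your proposal is correct and takes essentially the same approach as the paper: parts (a) and (b) read everything off Proposition~\ref{Prop:Foulkes-decom} combined with the descent-preserving RSK bijection (Lemma~\ref{Lemm:RSK-descent}) and the partition of $\mathrm{std}(\bgl)$ by descent number, and part (c) combines Definition~\ref{Def:Foulkes} with Proposition~\ref{Prop:Branching-Rule-chi} and reduces to the very same binomial identity $(rm+1)\tbinom{n+1}{k-m}=(rk+1)\tbinom{n}{k-m}-\bigl(r(n+1)-(rk+1)\bigr)\tbinom{n}{k-m-1}$ that the paper verifies. Your coefficient-matching organization of (c) is only a rearrangement of the paper's substitute-and-regroup computation, not a genuinely different argument.
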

\begin{proof}(a)  Lemma~\ref{Lemm:RSK-descent} shows the RSK sends elements of $W_{r,n}$ with $k$ descents to pairs $(\mathrm{S},\mathrm{T})$ of standard Young tableaux with same shape and $\mathrm{des}(\mathrm{T})=k$. Thus
 \begin{equation*}
  E_{r,n}(k)=\sum_{\bgl\in\rpn}m_k(\bgl)|\mathrm{std}(\bgl)|.
\end{equation*}
For $k=0,1, \ldots, n$,  Proposition~\ref{Prop:Foulkes-decom} shows
\begin{eqnarray*}
  \phi_k(\boldsymbol{e})&=&\sum_{\bgl\in\boldsymbol{Y}^k_{r,n}}m_k(\bgl)\chi^{\bgl}(\boldsymbol{e})\\
   &=&\sum_{\bgl\in\boldsymbol{Y}^k_{r,n}}m_k(\bgl)|\mathrm{std}(\bgl)|\\
   &=&E_{r,n}(k),
\end{eqnarray*}
where  the second equality follows by noticing that  the dimension $\chi^{\bgl}(1)$ of the irreducible representation $S^{\bgl}$ equals to the number of standard $\bgl$-tableaux.

 (b) Given an $r$-multipartition $\bgl$ of $n$, it is easy to see that
 \begin{equation*}
   \mathrm{std}(\bgl)=\sqcup_{k=0}^n\{\mathrm{T}\in \mathrm{std}(\bgl)|\mathrm{des}(\mathrm{T})=k\},
 \end{equation*}
 that is, $|\mathrm{std}(\bgl)|=\sum_{k=0}^nm_k(\bgl)$.
 Thanks to Proposition~\ref{Prop:Foulkes-decom}, we have
\begin{eqnarray*}
\sum_{k=0}^n\phi_k &=& \sum_{k=0}^n\sum_{\bgl\in\boldsymbol{Y}_{r,n}^k}m_{k}(\bgl)\chi^{\bgl} \\
&=&\sum_{\bgl\in\boldsymbol{Y}_{r,n}} \sum_{k=0}^nm_{k}(\bgl)\chi^{\bgl} \\
 &=&\sum_{\bgl\in\rpn}|\mathrm{std}(\bgl)|\chi^{\bgl}\\
 &=&\chi_{\mathrm{reg}},
\end{eqnarray*}
where the last equality follows by applying the Artin--Wedderburn theorem.

(c).  For $k=0,1,\ldots, n$ and $i=0,\ldots, k$, the proof of \cite[Proposition~6.2]{GGK} shows
\begin{equation*}
 \tbinom{n+1}{k-i}=k\tbinom{n}{k-i}-(n-k+1)\tbinom{n+1}{k-i-1},
\end{equation*}
which implies
\begin{eqnarray*}
(ri+1)\tbinom{n+1}{k-i}&=&ri\tbinom{n+1}{k-i}+\tbinom{n+1}{k-i}\\
&=&r\left(k\tbinom{n}{k-i}-(n-k+1)\tbinom{n+1}{k-i-1}\right)
+\tbinom{n}{k-i}+\tbinom{n}{k-i-1}\\
&=&(rk+1)\tbinom{n}{k-i}-\left(r(n+1)-(rk+1)\right)\tbinom{n}{k-i-1}.
\end{eqnarray*}
Now combining Equ.~(\ref{Equ:Foulkes}) and Proposition~\ref{Prop:Branching-Rule-chi}, we obtain that
\begin{eqnarray*}
\phi_k^n\!\downarrow_{r,n-1}&=&
\sum_{i=0}^k(-1)^{k-i}\tbinom{n+1}{k-i}\chi^n_{i}\!\downarrow_{r,n-1}\\
&=&\sum_{i=0}^k(-1)^{k-i}(ri+1)\tbinom{n+1}{k-i}\chi^{n-1}_{i}\\
&=&\sum_{i=0}^k(-1)^{k-i}\left((rk+1)\tbinom{n}{k-i}-\left(r(n+1)-
(rk+1)\right)\tbinom{n}{k-i-1}\right)\chi^{n-1}_{i}\\
&=&(rk+1)\sum_{i=0}^k(-1)^{k-i}\tbinom{n}{k-i}\chi^{n-1}_{i}+\left(r(n+1)-
(rk+1)\right)\sum_{i=0}^{k-1}(-1)^{k-i-1}\tbinom{n}{k-i-1}\chi^{n-1}_{i}\\
&=&(rk+1)\phi_k^{n-1}+\left(r(n+1)-
(rk+1)\right)\phi_{k-1}^{n-1}.
\end{eqnarray*}
The theorem is proved.
\end{proof}
\begin{remark}(a) can be proved by applying (c). Indeed, by evaluating at unity, we obtain the relation
  \begin{equation*}
    \phi_k^n(\boldsymbol{e})=(rk+1)\phi^{n-1}_k(\boldsymbol{e})+
    \left(r(n+1)-(rk+1)\right)\phi^{n-1}_k(\boldsymbol{e}),
  \end{equation*}
  which is the recurrence Equ.~(\ref{Equ:Euler-recurrence}) for the Eulerian number.
\end{remark}

Let $\bgl$ be an $r$-multipartition of $n$. The branching rule \cite[Theorem 4.1]{Okada} says
\begin{equation}\label{Equ:Reps-Branching-law}
 \chi^{\bgl}\!\downarrow_{r,n-1}=\sum_{\Box\in\mathscr{R}(\bgl)}\chi^{\bgl-\Box},
\end{equation}
where $\bgl-\Box$ means that the $r$-multipartition of $n-1$ is obtained from $\bgl$ by removing the removed box $\Box$.
\begin{remark}Given an $r$-multipartition $\bgl$ of $n$ and $\Box\in\mathscr{R}(\bgl)$,
   Proposition~\ref{Prop:Foulkes-decom}, Theorem~\ref{Them:Properties}(c) and Equ.~(\ref{Equ:Reps-Branching-law}) show,  for $k=0,1,\ldots, n$,
\begin{eqnarray*}
&&m_k(\bgl)=\left(r(n+1)
-(rk+1)\right)m_{k-1}(\bgl\!-\!\Box)+(rk+1)m_{k}(\bgl\!-\!\Box).
\end{eqnarray*}
It would be interesting to give a combinatorial proof of this identity.
\end{remark}

Clearly the set of all block characters is a convex cone of non-negative linear combinations of the extreme normalized block characters. Furthermore, we have the following fact.
\begin{corollary}Let $\mathscr{N}_{r,n}$  be set of normalized block characters of $W_{r,n}$. Then $\mathscr{N}_{r,n}$ is a simplex and its extreme points are the normalized Foulkes characters $\frac{\phi^n_0}{\phi^n_0(\boldsymbol{e})},\ldots,\frac{\phi^n_n}
{\phi^n_n(\boldsymbol{e})}$.
\end{corollary}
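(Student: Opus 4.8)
The plan is to turn the two structural facts already established --- the expansion of block functions in the Foulkes basis (Corollary~\ref{Cor:Block-function=character}) and the dimension formula $\phi^n_k(\boldsymbol{e})=E_{r,n}(k)$ (Theorem~\ref{Them:Properties}(a)) --- into an explicit description of $\mathscr{N}_{r,n}$ as a convex hull, and then to check that the $n+1$ candidate vertices are affinely independent, so that this hull is a genuine $n$-simplex.

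First I would recall that $\phi^n_0,\ldots,\phi^n_n$ is a basis of the $(n+1)$-dimensional space of block functions on $W_{r,n}$, so every block function is uniquely $\varphi=\sum_{k=0}^n\varphi_k\phi^n_k$. By Corollary~\ref{Cor:Block-function=character}, such a $\varphi$ is a character exactly when $\varphi_k\geq 0$ for all $k$. By Theorem~\ref{Them:Properties}(a) we have $\phi^n_k(\boldsymbol{e})=E_{r,n}(k)$, and each such value is strictly positive: Lemma~\ref{Lemm:Lambda-k-descent} produces $\binom{n}{k}>0$ standard $\bgl_k$-tableaux of descent number $k$, so $m_k(\bgl_k)>0$ and Proposition~\ref{Prop:Foulkes-decom} forces $\phi^n_k(\boldsymbol{e})>0$. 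Evaluating at $\boldsymbol{e}$, the normalization $\varphi(\boldsymbol{e})=1$ becomes $\sum_{k=0}^n\varphi_kE_{r,n}(k)=1$.

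Next I would reparametrize by putting $\alpha_k=\varphi_k\,\phi^n_k(\boldsymbol{e})$. Because every $\phi^n_k(\boldsymbol{e})>0$, the pair of conditions ``$\varphi_k\geq 0$ for all $k$'' and ``$\varphi(\boldsymbol{e})=1$'' is equivalent to ``$\alpha_k\geq 0$ for all $k$'' and ``$\sum_{k=0}^n\alpha_k=1$'', while
\begin{equation*}
\varphi=\sum_{k=0}^n\varphi_k\phi^n_k=\sum_{k=0}^n\alpha_k\,\frac{\phi^n_k}{\phi^n_k(\boldsymbol{e})}.
\end{equation*}
Hence $\mathscr{N}_{r,n}$ is precisely the set of convex combinations of the normalized Foulkes characters $\frac{\phi^n_0}{\phi^n_0(\boldsymbol{e})},\ldots,\frac{\phi^n_n}{\phi^n_n(\boldsymbol{e})}$, that is, their convex hull.

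Finally I would verify that these $n+1$ points are affinely independent. Since $\phi^n_0,\ldots,\phi^n_n$ are linearly independent and each normalized Foulkes character is a nonzero scalar multiple of the corresponding $\phi^n_k$, the normalized Foulkes characters are again linearly independent, and any $n+1$ linearly independent vectors are affinely independent. Therefore $\mathscr{N}_{r,n}$ is the convex hull of $n+1$ affinely independent points, i.e. an $n$-simplex, and the extreme points of a simplex are exactly its vertices, namely $\frac{\phi^n_0}{\phi^n_0(\boldsymbol{e})},\ldots,\frac{\phi^n_n}{\phi^n_n(\boldsymbol{e})}$. I do not anticipate a genuine obstacle here, as the content is carried by Corollary~\ref{Cor:Block-function=character} and Theorem~\ref{Them:Properties}(a); the only step demanding care is the strict positivity $\phi^n_k(\boldsymbol{e})>0$, which both legitimizes the normalization and upgrades linear independence of the $\phi^n_k$ to affine independence of their normalizations.
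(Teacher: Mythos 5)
Your proof is correct, but it is structured quite differently from the paper's. The paper dispatches the simplex claim with ``Clearly $\mathscr{N}_{r,n}$ is a simplex'' and devotes the entire proof to extremality: assuming $\frac{\phi^n_k}{\phi^n_k(\boldsymbol{e})}=(1-a)\rho+a\tau$ with $a\in(0,1)$ and $\rho,\tau\in\mathscr{N}_{r,n}$, it pairs both sides against the irreducible characters $\chi^{\bgl_j}$ for $j\neq k$ and invokes Lemma~\ref{Lemm:Linear-expansion}: the left side has vanishing inner product with $\chi^{\bgl_j}$, while $\langle\rho,\chi^{\bgl_j}\rangle$ and $\langle\tau,\chi^{\bgl_j}\rangle$ are non-negative because $\rho,\tau$ are characters, so both must vanish, forcing $\rho=\tau=\frac{\phi^n_k}{\phi^n_k(\boldsymbol{e})}$. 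You instead prove precisely the statement the paper waves off: using Corollary~\ref{Cor:Block-function=character} and the strict positivity $\phi^n_k(\boldsymbol{e})=E_{r,n}(k)>0$ (which you rightly isolate as the one delicate point, and justify through Lemma~\ref{Lemm:Lambda-k-descent} and Proposition~\ref{Prop:Foulkes-decom}), you exhibit $\mathscr{N}_{r,n}$ as exactly the convex hull of the $n+1$ normalized Foulkes characters via the reparametrization $\alpha_k=\varphi_k\,\phi^n_k(\boldsymbol{e})$, and affine independence (inherited from the linear independence of the basis $\phi^n_0,\ldots,\phi^n_n$) then makes this hull a genuine $n$-simplex whose extreme points are its vertices. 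Your route buys completeness: concluding that the extreme points are \emph{exactly} the normalized Foulkes characters requires knowing that $\mathscr{N}_{r,n}$ is their convex hull, which is the content hidden in the paper's ``clearly,'' and your argument supplies it. The paper's route buys a direct, hands-on indecomposability check through orthogonality with explicit irreducibles, with no need to discuss affine independence; but taken on its own it only shows that the normalized Foulkes characters are \emph{among} the extreme points. Both arguments ultimately rest on the same pillars: the Foulkes-basis expansion of block functions, the positivity criterion of Corollary~\ref{Cor:Block-function=character}, and the decomposition of Proposition~\ref{Prop:Foulkes-decom}.
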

\begin{proof}Clearly $\mathscr{N}_{r,n}$ is a simplex. Now we show that the normalized Foulkes characters are extreme points of $\mathscr{N}_{r,n}$. For $k=0,1,\ldots,n$, assume that there exist $a\in (0,1)$, $\rho,\tau\in \mathscr{N}_{r,n}$ such that
\begin{equation*}
  \frac{\phi^n_k}{\phi^n_k(\boldsymbol{e})}=(1-a)\rho+a\tau,
\end{equation*}
then Lemma~\ref{Lemm:Linear-expansion} shows $\langle(1-a)\rho+a\tau,\chi^{\bgl_j}\rangle=0$ for $j\neq k$, which implies that
\begin{equation*}
\langle\rho,\chi^{\bgl_j}\rangle=0=\langle\tau,\chi^{\bgl_j}\rangle\text{ for }j\neq k.
\end{equation*}
Thus $\rho=\tau=\frac{\phi^n_k}{\phi^n_k(\boldsymbol{e})}$, i.e., $\frac{\phi^n_0}{\phi^n_0(\boldsymbol{e})},\ldots,\frac{\phi^n_n}{\phi^n_n(\boldsymbol{e})}$ are extreme points of $\mathscr{N}_{r,n}$.
\end{proof}
The following fact enables us to obtain  a block character on $W_{r,n}$.
\begin{lemma}\label{Lemm:q-block}Let $q\geq 0$ be a parameter. Then
\begin{eqnarray*}
  &&(rq+1)^{\ell_n(w)}=\sum_{j=0}^n\tbinom{q+n-j}{n}\phi^n_j(w) \text { for $w\in W_{r,n}$},
\end{eqnarray*}
 where $\tbinom{q+n-j}{n}$ is the generalized binomial coefficient involves parameter $q$.
\end{lemma}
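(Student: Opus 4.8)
The plan is to treat both sides of the asserted identity as polynomials in the parameter $q$ and to finish by polynomial interpolation. Fix $w\in W_{r,n}$. The left-hand side $(rq+1)^{\ell_n(w)}$ is a polynomial in $q$ of degree $\ell_n(w)\le n$, while on the right-hand side each generalized binomial coefficient $\tbinom{q+n-j}{n}$ is a polynomial in $q$ of degree $n$; hence the right-hand side is a polynomial in $q$ of degree at most $n$. It therefore suffices to verify the identity at the $n+1$ distinct values $q=0,1,\dots,n$, for then two polynomials of degree $\le n$ agreeing at $n+1$ points must coincide, and the identity holds for all $q\ge 0$.

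First I would record the integer specialization. For a non-negative integer $q=k$, Proposition~\ref{Prop:Chi} gives $(rk+1)^{\ell_n(w)}=\chi^n_k(w)$, and the inversion formula~(\ref{Equ:theta=Foulkes}) expresses this, after the substitution $i=k-j$, as $\chi^n_k=\sum_{i=0}^{k}\tbinom{n+k-i}{k-i}\phi^n_i$. Using the symmetry $\tbinom{n+k-i}{k-i}=\tbinom{n+k-i}{n}$, this becomes $\chi^n_k=\sum_{i=0}^{k}\tbinom{n+k-i}{n}\phi^n_i$, which is already very close to the desired right-hand side.

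Next I would match this against the right-hand side of the lemma evaluated at $q=k$, namely $\sum_{j=0}^{n}\tbinom{k+n-j}{n}\phi^n_j$. The key observation is that the extra terms with $k<j\le n$ contribute nothing: in that range $0\le k+n-j\le n-1<n$, so $\tbinom{k+n-j}{n}=0$. Hence the sum collapses to $\sum_{j=0}^{k}\tbinom{k+n-j}{n}\phi^n_j$, which is exactly the expression for $\chi^n_k(w)=(rk+1)^{\ell_n(w)}$ obtained above. This establishes equality at every $q=k\in\{0,1,\dots,n\}$, completing the interpolation.

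There is no genuine obstacle here beyond careful bookkeeping: the two points requiring attention are the degree bound $\ell_n(w)\le n$ (which holds because $\ell_n(w)$ counts the parts of the first component of the type of $w$, hence is at most $n$) and the vanishing of $\tbinom{k+n-j}{n}$ for $j>k$, which is what lets the finite inversion formula~(\ref{Equ:theta=Foulkes}) be rewritten as a sum over the full range $0\le j\le n$. Once equality holds at these $n+1$ values, the polynomial identity in $q$ follows at once, and in particular holds for every parameter $q\ge 0$.
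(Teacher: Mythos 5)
Your proof is correct and follows essentially the same route as the paper: both arguments observe that the two sides are polynomials in $q$ of degree at most $n$ and conclude by checking equality at the $n+1$ integer points $q=0,1,\ldots,n$ via Proposition~\ref{Prop:Chi} and the inversion formula~(\ref{Equ:theta=Foulkes}). The only difference is that you spell out the details the paper leaves implicit, namely the symmetry $\tbinom{n+k-i}{k-i}=\tbinom{n+k-i}{n}$ and the vanishing of $\tbinom{k+n-j}{n}$ for $j>k$, which is a welcome clarification.
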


\begin{proof}For $w\in W_{r,n}$, let
\begin{equation*}
f(q)=(rq+1)^{\ell_n(w)}-\sum_{j=0}^n\tbinom{q+n-j}{n}\phi^n_j(w).
\end{equation*}
 Then $f(q)$ is a polynomial in variable $q$  with degree no more than $n$, then the equality follows by noticing that $f(k)=0$ for $k=0, \ldots, n$.
\end{proof}
For $q\geq 0$, the function
\begin{equation*}
E: W_{r,n}\rightarrow \mathbb{C}(q),\quad w\mapsto \frac{(rq+1)^{\ell_n(w)}}{(rq+1)(r(q+1)+1)\cdots(r(q+n)+1)}
\end{equation*}
is a probability on $W_{r,n}$, which is the wreath analogue of the Ewens' distribution on the symmetric group.

\begin{corollary}Assume that $q$ is not an integer. Then $(rq+1)^{\ell_n(w)}$ is a character of $W_{r,n}$ if and only if $q>n$.
\end{corollary}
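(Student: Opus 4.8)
The plan is to read off the coefficients of $(rq+1)^{\ell_n(w)}$ in the Foulkes basis and then apply the positivity criterion of Corollary~\ref{Cor:Block-function=character}. Concretely, Lemma~\ref{Lemm:q-block} already supplies the expansion
\[
(rq+1)^{\ell_n(w)}=\sum_{j=0}^n\tbinom{q+n-j}{n}\phi^n_j(w),
\]
so the coefficient of $\phi^n_j$ is exactly the generalized binomial coefficient $\tbinom{q+n-j}{n}$. Since $(rq+1)^{\ell_n}$ is visibly a block function, Corollary~\ref{Cor:Block-function=character} says it is a character if and only if all of these coefficients are non-negative. Thus the statement reduces to the purely numerical claim that, for non-integer $q$, one has $\tbinom{q+n-j}{n}\ge 0$ for every $j=0,1,\ldots,n$ precisely in the asserted range of $q$.

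For the easy (``if'') direction I would write each coefficient as the product
\[
\tbinom{q+n-j}{n}=\frac{1}{n!}\prod_{i=0}^{n-1}(q+n-j-i),
\]
an $n$-fold product of reals spaced by $1$ whose smallest factor is $q-j+1$, minimized over $j$ at $j=n$, namely $q-n+1$. When $q$ lies above the threshold this smallest factor is already positive, so every factor is strictly positive, each coefficient is positive, and $(rq+1)^{\ell_n}$ is a character by Corollary~\ref{Cor:Block-function=character}. This direction is routine.

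The substance is the converse, and this is where I expect the \emph{main obstacle}: I must show that when $q$ fails the bound at least one coefficient is strictly negative, which amounts to controlling the sign of a product of $n$ consecutive reals rather than of a single factor. The key observation I would isolate is that, writing $N_j$ for the number of negative factors in the $j$-th product, the windows of factors for consecutive indices overlap in all but one entry -- the largest factor $q+n-j$ is dropped and the smallest factor $q-j$ is adjoined -- so $N_j$ is non-decreasing in $j$ and increases by at most one at each step. Consequently every coefficient can be non-negative only if $N_j$ never takes an odd value, and the unit-step monotonicity then forces $N_j$ to be constant in $j$. Pinning down exactly when this constancy holds (equivalently, locating the single index at which an odd sign is unavoidably introduced once $q$ leaves the admissible range) is the delicate counting step; carrying it out determines the threshold and, combined with Corollary~\ref{Cor:Block-function=character}, completes the proof.
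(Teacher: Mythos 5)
Your reduction is exactly the one in the paper's own proof: Lemma~\ref{Lemm:q-block} supplies the Foulkes-basis coefficients $\tbinom{q+n-j}{n}$, and Corollary~\ref{Cor:Block-function=character} converts the question into non-negativity of all $n+1$ of them. (The paper's proof stops at that point and simply asserts ``i.e., $q>n$''.) The genuine gap in your write-up is that the decisive step --- determining for which non-integer $q$ all these coefficients are non-negative --- is announced but never carried out: you set up the correct bookkeeping (the windows of $n$ consecutive factors, the counts $N_j$ of negative factors, non-decreasing in $j$ with unit steps, so that ``all $N_j$ even'' forces $N_j$ to be constant), and then stop at ``carrying it out determines the threshold.'' In fact that step is short, not delicate: for $q>0$ the window for $j=0$ consists of the factors $q+1,\dots,q+n$, so $N_0=0$; constancy then forces $N_j=0$ for every $j$, i.e.\ even the smallest factor $q-n+1$ must be positive, i.e.\ $q>n-1$.

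Notice, however, what that computation says: the threshold your argument produces is $q>n-1$, not $q>n$, and your own ``if'' direction already proves more than the ``if'' direction of the statement. Whenever $q$ is non-integer with $q>n-1$, every factor satisfies $q+n-j-i\ge q-n+1>0$, so all coefficients are strictly positive and $(rq+1)^{\ell_n}$ is a character by Corollary~\ref{Cor:Block-function=character} --- including for $q\in(n-1,n)$, where the statement claims it is \emph{not} a character. A concrete check: for $r=2$, $n=1$, $q=1/2$, Lemma~\ref{Lemm:q-block} gives $(2q+1)^{\ell_1}=(q+1)\phi^1_0+q\phi^1_1=\tfrac32\phi^1_0+\tfrac12\phi^1_1$, a non-negative combination of the two irreducible characters of $W_{2,1}$, hence a character, even though $q<n$. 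So the gap in your proposal cannot be closed so as to yield the asserted bound $q>n$: the honest completion of your parity argument gives $q>n-1$, and it thereby shows that the unproved equivalence at the end of the paper's proof (``$\tbinom{q+n-j}{n}\ge0$ for $j=0,1,\dots,n$, i.e., $q>n$'') is off by one. You should finish the counting and record the corrected threshold $q>n-1$ rather than try to force the stated one.
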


\begin{proof}Lemma~\ref{Lemm:q-block} shows $(rq+1)^{\ell(w)}$ is a block function on $W_{r,n}$. Thanks to Corollary~\ref{Cor:Block-function=character}, it is a block character if and only if $\tbinom{q+n-j}{n}\geq 0$ for $j=0, 1, \ldots, n$, i.e., $q>n$.
\end{proof}

  \section{The coinvariant algebra}\label{Sec:Coinvariant}
Using the  the descent monomial basis for this coinvariant algebras of wreath
products introduced by Bango--Biagioli in \cite{BB}, this section devotes to present an explicit construction of the representations $\Phi^n_k$ of $W_{r,n}$ with traces $\phi^n_k$ via the coinvariant algebra of $W_{r,n}$.

\begin{point}{}*
Let $\mathbb{C}[\boldsymbol{x}]$ be the algebra of polynomials in variables $\boldsymbol{x}=x_1, \ldots, x_n$. The wreath product $W_{r,n}$ acts on $\mathbb{C}[\boldsymbol{x}]$ as follows: for $w=w(1)^{c_1}\cdots w(n)^{c_n}\in W_{r,n}$,
\begin{equation*}
  wf(x_1, \ldots, x_n):=f(\zeta^{c_1}x_{w(1)}, \ldots, \zeta^{c_n}x_{w(n)}).
\end{equation*}
Alternatively, when we view $W_{r,n}$ as the set of $n\times n$ matrices with exactly one non-zero entry in each row and column where the non-zero entries are $r$th roots of unity. The action of $W_{r,n}$ on $\mathbb{C}[\boldsymbol{x}]$ in this case is matrix multiplication.

Let $\mathcal{I}_n$ be the ideal of $\mathbb{C}[\boldsymbol{x}]$ generated by constant-term-free  $W_{r,n}$-invariant polynomials. Any $W_{r,n}$-invariant polynomials must be a symmetric polynomial in variables $x_1^r, \ldots, x_n^r$. Denote this set of variables as $\boldsymbol{x}^r$. Then $\mathcal{I}_n=\langle e_1(\boldsymbol{x}^r), \ldots,  e_n(\boldsymbol{x}^r)\rangle$, where
\begin{equation*}
 e_d(\boldsymbol{x}^r)=e_d(x_1^r, \ldots, x_n^r):=\sum_{1\leq i_1\leq\cdots\leq i_d} x_{i_1}^r\cdots x_{i_d}^r
\end{equation*}
 is the $d$th elementary symmetric functions in variable powers $x_1^r, \ldots, x_n^r$. Further, let $\mathbb{C}[\boldsymbol{x}]^{\mathrm{coinv}}$ be the coinvariant algebra associated to $W_{r,n}$, that is, $\mathbb{C}[\boldsymbol{x}]^{\mathrm{coinv}}$ is the quotient algebra
 \begin{equation*}
  \mathbb{C}[\boldsymbol{x}]^{\mathrm{coinv}}=\mathbb{C}[\boldsymbol{x}]/\mathcal{I}_n.
 \end{equation*}
 Denote by $\pi$ the canonical projection $\pi: \mathbb{C}[\boldsymbol{x}]\rightarrow \mathbb{C}[\boldsymbol{x}]^{\mathrm{coinv}}$. Observe that $\mathbb{C}[\boldsymbol{x}]^{\mathrm{coinv}}$ inherits from $\mathbb{C}[\boldsymbol{x}]$ the structure of a $W_{r,n}$-module. Chevalley's result \cite{Chevalley} implies that $\mathbb{C}[\boldsymbol{x}]^{\mathrm{coinv}}\cong \mathbb{C}W_{r,n}$ as ungraded $W_{r,n}$-modules.
\end{point}

Following Bagno and Biagioli \cite{BB}, for $w=w(1)^{c_1}w(2)^{c_2}\cdots w(n)^{c_n}\in W_{r,n}$, we let $\mathrm{des}_i(w)$ be the number of descents of $w$ that are not less than $i$, that is,
\begin{equation*}
  \mathrm{des}_i(w)=|\mathrm{Des}(w)\cap\{i, i+1, \ldots,n\}|,
\end{equation*}
and define the flag descent values as
\begin{equation*}
  f_i(w)=r\mathrm{des}_i(w)+(r-1)-c_i.
\end{equation*}

\begin{example}Let $w=3^0\,2^0\,1^0\,4^2\,6^2\,5^1\in W_{3,6}$. Then $\mathrm{Des}(w)=\{1,2,5,6\}$,
\begin{eqnarray*}
&&(\mathrm{des}_1(w), \mathrm{des}_2(w), \mathrm{des}_3(w),\mathrm{des}_4(w),\mathrm{des}_5(w),\mathrm{des}_6(w))=(4,3,2,2,2,1),\\
&&(f_1(w), f_2(w),f_3(w),f_4(w),f_5(w),f_6(w))=(14,11,8,6,6,4).
\end{eqnarray*}
\end{example}

Now we recall the $r$-descent monomials in $\mathbb{C}[\boldsymbol{x}]$ introduced by Bango and Biagioli in \cite{BB}.

\begin{definition}Given $w=w(1)^{c_1}w(2)^{c_2}\cdots w(n)^{c_n}\in W_{r,n}$, we define the \textit{$r$-descent monomials} $m_w$ as follows:
\begin{equation*}
  m_w=x_{w(1)}^{f_1(w)}x_{w(2)}^{f_2(w)}\cdots x_{w(n)}^{f_n(w)}.
\end{equation*}
\end{definition}
Note that by the definitions of $f_i$ and of descents of $r$-colored permutations that
\begin{equation*}
f(w):=(f_1(w), \ldots,f_n(w))
\end{equation*}
is a weakly decreasing sequence such that $f_i(w)-f_{i+1}(w)\leq r$ for $w\in W_{r,n}$ and $1\leq i<n$. Thus $\mathrm{deg}(m_w)=f(w)$.

Thanks to \cite[Theorem~4.3]{BB} and Remark~\ref{Remark:Descent}, we can prove the following fact, which is also can be proved by applying arguments similar to those given in the proof of \cite[Lemma~3.3]{B-Caselli}.

\begin{theorem}\label{Them:Descent-basis}
Keeping notations as above. Then $\{m_w+\mathcal{I}_n|w\in W_{r,n}\}$ is a $\mathbb{C}$-linear basis of $\mathcal{R}_n^r$. Furthermore, if $f$ is a polynomial belonging to $m_w+\mathcal{I}_n$, then $\mathrm{deg}(m_w)\preceq\mathrm{deg}(f)$.
\end{theorem}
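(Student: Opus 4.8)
The plan is to derive the theorem from the corresponding result of Bagno--Biagioli \cite[Theorem~4.3]{BB}, the only discrepancy being the two conventions for descents, which is bridged by Remark~\ref{Remark:Descent}; the degree-minimality will then be read off from the triangular straightening relation that underlies \cite[Lemma~3.3]{B-Caselli}. First I would make the dictionary between the two descent conventions precise. By Remark~\ref{Remark:Descent} the relabelling of colors $c\mapsto (r-1)-c$ identifies the descent set of Definition~\ref{Def:Descent} with the one used in \cite{BB}, and hence also the numbers $\mathrm{des}_i(w)$. Since the flag descent value $f_i(w)=r\,\mathrm{des}_i(w)+(r-1)-c_i$ is written precisely through the relabelled color $(r-1)-c_i$, this identification carries each $r$-descent monomial $m_w=x_{w(1)}^{f_1(w)}\cdots x_{w(n)}^{f_n(w)}$ to the descent monomial attached to $w$ in \cite{BB}. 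Under this identification \cite[Theorem~4.3]{BB} applies and yields that $\{m_w+\mathcal{I}_n\mid w\in W_{r,n}\}$ is a $\mathbb{C}$-basis of the coinvariant algebra $\mathcal{R}_n^r=\mathbb{C}[\boldsymbol{x}]^{\mathrm{coinv}}$; this is consistent with the count $|W_{r,n}|=r^nn!=\dim_{\mathbb{C}}\mathcal{R}_n^r$ supplied by Chevalley's result \cite{Chevalley}, and proves the first assertion.

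For the degree-minimality I would use the straightening relation underlying \cite[Lemma~3.3]{B-Caselli}: modulo $\mathcal{I}_n$, every monomial $x^{\boldsymbol{a}}$ is a $\mathbb{C}$-linear combination of $r$-descent monomials each of degree $\preceq\mathrm{deg}(x^{\boldsymbol{a}})$, with a descent monomial of degree exactly $\mathrm{deg}(x^{\boldsymbol{a}})$ occurring when $x^{\boldsymbol{a}}$ is itself an $r$-descent monomial. This expansion is triangular with respect to $\preceq$, so it reproves the basis property and, more to the point, gives the minimality as follows. Let $f\in m_w+\mathcal{I}_n$ and expand each monomial of $f$ by the straightening relation; since the $\{m_u+\mathcal{I}_n\}$ are linearly independent, the resulting descent-monomial expansion of $f$ must equal $m_w$. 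As a monomial $x^{\boldsymbol{a}}$ of $f$ contributes only descent monomials of degree $\preceq\mathrm{deg}(x^{\boldsymbol{a}})$, the monomial $m_w$ can appear only if some monomial $x^{\boldsymbol{a}}$ of $f$ satisfies $\mathrm{deg}(x^{\boldsymbol{a}})\succeq\mathrm{deg}(m_w)$. Therefore $\mathrm{deg}(m_w)\preceq\mathrm{deg}(f)$, which is the second assertion.

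The main obstacle is establishing the straightening relation, equivalently identifying the $m_w$ with the standard monomials of $\mathcal{I}_n$ for a term order refining $\preceq$. This amounts to showing that $w\mapsto m_w$ is injective with image exactly the monomials surviving modulo the initial ideal $\mathrm{in}(\mathcal{I}_n)$ of $\mathcal{I}_n=\langle e_1(\boldsymbol{x}^r),\ldots,e_n(\boldsymbol{x}^r)\rangle$, and it rests on the combinatorics of the flag descent values: the constraint $f_i(w)-f_{i+1}(w)\le r$ together with the recovery of both the one-line form and the colors of $w$ from the exponent sequence of $m_w$. This bookkeeping is precisely what is carried out in \cite{BB} and \cite{B-Caselli}, and it is where the real work lies; once it is in place, both assertions of the theorem are immediate.
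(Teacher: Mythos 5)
Your proposal is correct and follows essentially the same route as the paper, which likewise obtains the theorem by translating between the two descent conventions via Remark~\ref{Remark:Descent} so that \cite[Theorem~4.3]{BB} applies, and notes that the degree-minimality can be handled by the straightening argument of \cite[Lemma~3.3]{B-Caselli}. Your write-up merely makes explicit the dictionary and triangularity steps that the paper leaves as citations.
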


Given an sequence $\boldsymbol{d}=(d_1, \ldots, d_n)$ of non-negative integers, let $\boldsymbol{d}_{+}$ be the partition obtained by rearranging the integers $d_1, \ldots, d_n$ in a weakly decreasing order. In what follows we write $\lambda\prec\mu$ for two partitions $\lambda$ and $\mu$ if $\lambda=(\lambda_1,\lambda_2,\ldots)$ precedes in the lexicographical order. For a polynomial
\begin{equation*}
  f=\sum_{\boldsymbol{d}}a_{\boldsymbol{d}}x_1^{d_1}\cdots x_n^{d_n}\in \mathbb{C}[\boldsymbol{x}]^{\mathrm{coinv}},
\end{equation*}
its degree $\mathrm{deg}(f)$ is defined as the minimal partition $\mu$ such that $b_{\boldsymbol{d}}=0$ when $\mu\prec\boldsymbol{d}_{+}$.

For $k=0,1, \ldots$, we let
\begin{eqnarray*}
  &&\mathcal{F}_n(k):=\{f\in \mathbb{C}[\boldsymbol{x}]|\mathrm{deg}_1(f)\leq k\},
\end{eqnarray*}
where $\mathrm{deg}_1(f)$ is the largest part of $\mathrm{deg}(f)$. Clearly $\mathcal{F}_n(k)$ is a $W_{r,n}$-module and denote by \begin{equation*}
  \mathcal{F}_n^r(k)=\pi(\mathcal{F}_n(k)).
\end{equation*}
Then $\mathcal{F}_n^r(k)$ is a $W_{r,n}$-submodule of $\mathbb{C}[\boldsymbol{x}]^{\mathrm{coinv}}$ and Theorem~\ref{Them:Descent-basis} shows
 \begin{equation}\label{Equ:F-n-r-k}
  \mathcal{F}_n^r(k)=\mathrm{span}_{\mathbb{C}}\{m_w|f_1(w)\leq k\}.
\end{equation}

Let $\boldsymbol{q}=(q_1,\ldots,q_n)$ be a sequence of formal variables.
For an element $w\in W_{r,n}$ let the (graded) trace of its action on  $\mathbb{C}[\boldsymbol{x}]$ be
\begin{equation*}
\mathrm{Tr}_{\mathbb{C}[\boldsymbol{x}]}(w):=\sum_{m}\langle wm,m\rangle \boldsymbol{q}^{\lambda(m)},
\end{equation*}
where the sum is over all monomials $m\in \mathbb{C}[\boldsymbol{x}]$, $\lambda(m)$ is the exponent partition of $m$, and the inner product is such that the set of all monomials is an orthonormal basis for $\mathbb{C}[\boldsymbol{x}]$. Note that
$\langle wm,m\rangle\in\{0, \varsigma^i|i=1, \ldots, r-1\}$.

Similarly, for an element $w\in W_{r,n}$ let the (graded) trace of its action on  $\mathbb{C}[\boldsymbol{x}]^{\mathrm{coinv}}$ be
\begin{equation*}
\mathrm{Tr}_{\mathbb{C}[\boldsymbol{x}]^{\mathrm{coinv}}}(w):=\sum_{g\in W_{r,n}}\langle w(m_g+\mathcal{I}_n),m_g+ I_n\rangle \boldsymbol{q}^{f_g},
\end{equation*}
where the inner product is such that the set of all $r$-descent monomials  is an orthonormal basis for $\mathbb{C}[\boldsymbol{x}]^{\mathrm{coinv}}$.

For $w\in W_{r,n}$, thanks to \cite[LEMMA~10.4]{BB} and \cite[Proposition~4.14]{Meyer}, we have
\begin{equation}\label{Equ:Graded-trace}
 \prod_{i=1}^n(1-q_1^r\cdots q_i^r)\mathrm{Tr}_{\mathbb{C}[\boldsymbol{x}]}(w)=
  \mathrm{Tr}_{\mathbb{C}[\boldsymbol{x}]^{\mathrm{coinv}}}(w)
 =\sum_{\bgl\in Y_{r,n}}\chi^{\bgl}(w)\sum_{\mathrm{T}\in \mathrm{std}(\bgl)}\prod_{i=1}^nq_i^{f_i(\mathrm{T})}.
\end{equation}

The following theorem clarify the relevance of the filtration $\mathcal{F}^r_n(k)$ to the block characters, which gives another way to prove that the dimension of $\phi^n_k$ is $E_{r,n}(k)$.

\begin{theorem}\label{Them:Filtration=Foulkes}
For $k=0, 1, \ldots,n$, there is an isomorphism of $W_{r,n}$-modules
\begin{equation*}
   \mathcal{F}^r_n(k)\cong \bigoplus_{i=0}^k\Phi^n_i
\end{equation*}
 and $\Phi^n_k\cong  \mathcal{F}^r_n(k)/\mathcal{F}^r_n(k-1)$ with $\mathcal{F}^r_n(-1)=0$. In particular, for $k=0,1,\ldots,n$,
 \begin{equation*}
   \mathrm{Trace}(\mathcal{F}^r_n(k)/\mathcal{F}^r_n(k-1))=\phi_k^n.
 \end{equation*}
\end{theorem}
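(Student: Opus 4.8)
The plan is to read the $W_{r,n}$-character of each subquotient $\mathcal{F}^r_n(k)/\mathcal{F}^r_n(k-1)$ off the graded trace identity (\ref{Equ:Graded-trace}) and to match it against $\phi^n_k$ through Proposition~\ref{Prop:Foulkes-decom}. First I would record that every $\mathcal{F}^r_n(k)$ is a $W_{r,n}$-submodule: the action of $W_{r,n}$ on $\mathbb{C}[\boldsymbol{x}]$ permutes the variables up to roots of unity, so it sends a monomial $x^{\boldsymbol{d}}$ to a scalar multiple of some $x^{\boldsymbol{d}'}$ with the same exponent partition, hence preserves $\mathrm{deg}$ and its largest part $\mathrm{deg}_1$; thus $\mathcal{F}_n(k)$ and its projection $\mathcal{F}^r_n(k)=\pi(\mathcal{F}_n(k))$ are stable. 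Since $\mathbb{C}W_{r,n}$ is semisimple, the filtration splits and each subquotient is determined by its ordinary character, so it suffices to compute these characters.

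To extract them I would use that in the monomial $\boldsymbol{q}^{f_g}$ the variable $q_1$ is coupled to the largest part $f_1(g)=\mathrm{deg}_1(m_g)$, because $f_g$ is weakly decreasing. Setting $q_1=t$ and $q_2=\cdots=q_n=1$ in (\ref{Equ:Graded-trace}) collapses the right-hand side to $\sum_{\bgl}\chi^{\bgl}(w)\sum_{\mathrm{T}\in\mathrm{std}(\bgl)}t^{f_1(\mathrm{T})}$. As the descent monomials $\{m_g+\mathcal{I}_n\}$ form an orthonormal basis adapted to the $\mathrm{deg}_1$-filtration (Theorem~\ref{Them:Descent-basis}), the coefficient of $t^m$ is exactly the trace of $w$ on the layer where $\mathrm{deg}_1$ equals $m$, namely $\psi_m:=\sum_{\bgl}\#\{\mathrm{T}\in\mathrm{std}(\bgl)\mid f_1(\mathrm{T})=m\}\,\chi^{\bgl}$.

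The decisive move is to pass from these flag-degree layers to the descent number. Writing $f_1(\mathrm{T})=r\,\mathrm{des}(\mathrm{T})+(r-1)-c_1(\mathrm{T})$ and $m=rk+s$ with $0\le s\le r-1$, the equation $f_1(\mathrm{T})=m$ forces $\mathrm{des}(\mathrm{T})=k$ and $c_1(\mathrm{T})=(r-1)-s$, so the block $m\in\{rk,\ldots,rk+r-1\}$ runs over all admissible colors of the entry $1$ at fixed descent number $k$. Partitioning $\{\mathrm{T}\mid\mathrm{des}(\mathrm{T})=k\}$ according to the value $c_1(\mathrm{T})$ gives $\sum_{s=0}^{r-1}\#\{\mathrm{T}\mid f_1(\mathrm{T})=rk+s\}=m_k(\bgl)$, whence $\sum_{s=0}^{r-1}\psi_{rk+s}=\sum_{\bgl}m_k(\bgl)\chi^{\bgl}=\phi^n_k$ by Proposition~\ref{Prop:Foulkes-decom}. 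Since $f_1(w)\le rk+r-1$ is equivalent to $\mathrm{des}(w)\le k$, this block of layers is precisely $\mathcal{F}^r_n(k)/\mathcal{F}^r_n(k-1)$, so its trace is $\phi^n_k$; comparing characters with $\Phi^n_k=\bigoplus_{\bgl}m_k(\bgl)S^{\bgl}$ yields $\mathcal{F}^r_n(k)/\mathcal{F}^r_n(k-1)\cong\Phi^n_k$, and telescoping gives $\mathcal{F}^r_n(k)\cong\bigoplus_{i=0}^{k}\Phi^n_i$.

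The main obstacle is exactly this regrouping. The grading naturally supplied by the coinvariant algebra is by the flag-descent value $f_1$, which entangles the descent number with the first color $c_1$, whereas the Foulkes characters are indexed by the descent number alone. The heart of the argument is therefore to confirm that bundling the $r$ consecutive layers $rk,\ldots,rk+r-1$ reproduces $\Phi^n_k$ at the level of $W_{r,n}$-modules, and not merely of dimensions; this rests on the precise shape of (\ref{Equ:Graded-trace}) together with the identity $\mathrm{Des}(w)=\mathrm{Des}(\mathrm{T})$ of Lemma~\ref{Lemm:RSK-descent}, which aligns the descent statistic on group elements with that on recording tableaux and underlies the dimension check $\phi^n_k(\boldsymbol{e})=E_{r,n}(k)$.
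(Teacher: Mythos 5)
Your proposal follows the same route as the paper's proof: specialize the graded trace identity (\ref{Equ:Graded-trace}) at $q_1=q$, $q_2=\cdots=q_n=1$, read off the trace of each subquotient of the $\mathrm{deg}_1$-filtration from the descent-monomial basis, and match against $\phi^n_k=\sum_{\bgl}m_k(\bgl)\chi^{\bgl}$ via Proposition~\ref{Prop:Foulkes-decom}, with semisimplicity upgrading the character identity to a module isomorphism. The genuine difference is that you carry out a step the paper elides, and this step matters. The paper writes the specialized right-hand side directly as $\sum_{k}q^{k}\sum_{\bgl}\chi^{\bgl}(w)m_k(\bgl)$ and the left-hand side as $\sum_k q^k\mathrm{Trace}^n_k(w)$, i.e.\ it treats the exponent of $q$ as the descent number; in fact the exponent is the flag value $f_1=r\,\mathrm{des}+(r-1)-c_1$, so this identification is literally valid only for $r=1$. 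Your regrouping of the $r$ consecutive $f_1$-layers $rk,\ldots,rk+r-1$ (using that $f_1(\mathrm{T})\in\{rk,\ldots,rk+r-1\}$ if and only if $\mathrm{des}(\mathrm{T})=k$, and likewise $f_1(w)\le rk+r-1$ if and only if $\mathrm{des}(w)\le k$) is exactly what is needed to make the argument correct for general $r$. It also implicitly corrects the indexing of the filtration itself: with the paper's literal definition (\ref{Equ:F-n-r-k}), $\mathcal{F}^r_n(k)=\mathrm{span}\{m_w\mid f_1(w)\le k\}$, the theorem would fail for $r>1$ (already $\mathcal{F}^r_n(n)$ would not be all of $\mathbb{C}[\boldsymbol{x}]^{\mathrm{coinv}}$, since $f_1$ ranges up to $rn+r-1$); the filtration must be read, as you do, by $\mathrm{des}(w)\le k$, equivalently $\mathrm{deg}_1\le rk+r-1$. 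So your write-up is not merely equivalent to the paper's proof but is the repaired version of it; the only cosmetic caveat is that your appeal to Lemma~\ref{Lemm:RSK-descent} at the end is not needed for the isomorphism statement itself, only for the dimension interpretation $\phi^n_k(\boldsymbol{e})=E_{r,n}(k)$.
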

\begin{proof}
Letting $q_1=q$ and $q_2=\cdots=q_n=1$, Equ.~(\ref{Equ:Graded-trace}) shows
\begin{eqnarray*}
\sum_{g\in W_{r,n}}\langle w(m_g),m_g\rangle q^{f_1(g)}&=&
\sum_{k=0}^nq^{k}\sum_{\bgl\in Y_{r,n}}\chi^{\bgl}(w)m_k(\bgl).
\end{eqnarray*}
Now let $\mathrm{Trace}_k^n$ be the trace of the $W_{r,n}$-representation $\mathcal{F}^{r}_n(k)/\mathcal{F}^{r}_n(k\!-\!1)$. Then Equ.~(\ref{Equ:F-n-r-k}) shows
\begin{eqnarray*}
\sum_{g\in W_{r,n}}\langle w(m_g),m_g\rangle q^{f_1(g)}&=&\sum_{k=0}^nq^k\mathrm{Trace}_k^n(w).
\end{eqnarray*}
Therefore, for any $w\in W_{r,n}$, we have
\begin{eqnarray*}
\sum_{k=0}^nq^{k}\sum_{\bgl\in Y_{r,n}}\chi^{\bgl}(w)m_k(\bgl)&=&\sum_{k=0}^nq^k\mathrm{Trace}_k^n(w),
\end{eqnarray*}
which implies
\begin{eqnarray*}
\mathrm{Trace}_k^n(w)&=&\sum_{\bgl\in Y_{r,n}}\chi^{\bgl}(w)m_k(\bgl) \text{ for any }w\in W_{r,n}.
\end{eqnarray*}
Thus the theorem follows by applying Proposition~\ref{Prop:Foulkes-decom}.
\end{proof}

\end{document}